\date{}
\def\interior{\qopname\relax o{int}}
\def\tb{\qopname\relax o{tb}}
\def\lk{\qopname\relax o{lk}}
\def\Arg{\qopname\relax o{Arg}}
\theoremstyle{theorem}
\newtheorem{theo}{Theorem}
\newtheorem{lemm}{Lemma}
\newtheorem{prop}{Proposition}
\newtheorem{conj}{Conjecture}
\newtheorem*{appr}{Approximation Principle}
\theoremstyle{remark}
\newtheorem{rema}{Remark}
\theoremstyle{definition}
\newtheorem{defi}{Definition}
\author {Ivan Dynnikov and Maxim Prasolov}
\thanks{The work is supported by the Russian Science Foundation under grant 14-50-00005 and performed in Steklov Mathematical Institute of Russian Academy of Sciences.}
\address{\noindent Steklov Mathematical Institute of Russian Academy of Sciences, 8 Gubkina Str., Moscow 119991, Russia}
\email{dynnikov@mech.math.msu.su}
\email{0x00002a@gmail.com}
\title{Rectangular diagrams of surfaces: representability}
\begin{document}
\maketitle

\begin{abstract}
We introduce a simple combinatorial way, which we call
a rectangular diagram of a surface, to represent a surface in
the three-sphere. It has a particularly nice relation to the standard
contact structure on $\mathbb S^3$ and to rectangular diagrams
of links. By using rectangular diagrams of surfaces we are going,
in particular, to develop a method to distinguish Legendrian knots.
This requires a lot of technical work of which the present
paper addresses only the first basic question: which isotopy classes
of surfaces can be represented by a rectangular
diagram. Vaguely speaking the answer is this:
there is no restriction on the isotopy class of the surface, but
there is a restriction on the rectangular diagram of the boundary
link that can arise from the presentation of the surface. The result
extends to Giroux's convex surfaces for which this restriction on
the boundary has a natural meaning.
In a subsequent paper we are going to consider transformations
of rectangular diagrams of surfaces and to study their properties.
By using the formalism of rectangular diagrams of surfaces
we also produce here an annulus in $\mathbb S^3$ that
we expect to be a counterexample to the following conjecture:
if two Legendrian knots cobound an annulus and have zero
Thurston--Bennequin numbers relative to this annulus, then they
are Legendrian isotopic.
\end{abstract}

\tableofcontents

\section{Introduction}
Rectangular diagrams of links, also known as arc-presentations and grid diagrams,
have proved to be useful tool in knot theory. It is shown in~\cite{Dyn} that
any rectangular diagram of the unknot admits a monotonic simplification to
a square by elementary moves. This was used by M.\,Lackenby to prove
a polynomial bound on the number of Reidemeister moves needed
to untangle a planar diagram of the unknot \cite{Lac}.

It is tempting to extend the monotonic simplification approach to general knots and
links, but certainly it cannot be done straightforwardly. Typically a non-trivial
link type admits more than one rectangular diagram that cannot be simplified
further by elementary moves without using stabilizations. As shown in \cite{DyPr}
classifying such diagrams is closely related to classifying Legendrian links not admitting a destabilization.

In particular, the main result of~\cite{DyPr} implies that the finiteness of
non-destabilizable Legendrian types in each topological link type (which is the matter of Question~61 in~\cite{cgh}) is equivalent
to the finiteness of the number of rectangular diagrams that represent
each given link type and cannot be monotonically simplified. Neither of these
has been established so far.

Surfaces embedded in $\mathbb S^3$, either closed or bounded by a link, have always been
one of the key instruments of the knot theory. For studying contact structures
and Legendrian links it is useful to consider surfaces that are in special
position with respect to the contact structure, so called convex surfaces.
They were introduced by E.\,Giroux in~\cite{Gi} and studied also in~\cite{col,etho,honda,kan,mas}.

Sometimes one can prove
the existence of a convex surface having certain combinatorial and
algebraic properties for one contact structure and
non-existence of such a surface for the other,
thus showing there is no contactomorphism between the structures.
This method can be viewed as a generalization
of the classical work of
Bennequin~\cite{ben} where he distinguishes a contact structure in $\mathbb R^3$
from the standard one by showing that an overtwisted disc exists for the
former and does not exist for the latter.
Indeed, an overtwisted disc can be viewed as a disc with Legendrian boundary and a closed dividing curve.

This method can also be used to distinguish Legendrian links by applying
the argument to the link complement. For instance, some Legendrian types
of iterated torus knots are distinguished in~\cite{etho05} by showing that
certain slopes on an incompressible torus in the knot complement are realized
by dividing curves for one knot and not realized for the other.

Note that whereas the problem of algorithmic comparing
topological types of two links has been solved (see \cite{mat})
no algorithm is known to compare Legendrian types of two Legendrian links
having the same topological type. Examples of pairs of Legendrian knots for which
the equivalence remains an open question start from six (!)\ crossings \cite{cho}.

In this paper we propose a simple combinatorial way to represent compact surfaces
in $\mathbb S^3$ so that the boundary is represented in the rectangular way.
The basic idea of this presentation is not quite new and is implicitly
present in the literature. In particular, it can be considered as
an instance of Kneser--Haken's normal surfaces for triangulations
of $\mathbb S^3\cong\mathbb S^1*\mathbb S^1$ obtained by the join construction from
triangulations of two circles $\mathbb S^1$.

We observe in this paper that
the discussed combinatorial approach appears
to be well adapted to Giroux's convex surfaces with Legendrian boundary.
In particular, we show that any convex surface with Legendrian boundary
is equivalent (in a certain natural sense) to a one that can be presented in the proposed way.

By using this approach we are going to distinguish some
pairs of Legendrian knots that are not distinguishable by known
methods.
Since a rectangular diagram of a surface is a simple combinatorial object, sometimes
the non-existence of a diagram with required properties can be established
by combinatorial methods. Combined with the representability result
of the present paper this may be used to show the non-existence of a convex surface with
certain properties in the complement of a Legendrian knot. If a convex surfaces
with the same properties is known to exist for another Legendrian knot the two
Legendrian types must be different.
This method is out
of the scope of the present paper and will be presented
in a subsequent paper.

We also consider here in more detail embedded convex (in Giroux's sense) annuli in $\mathbb S^3$ tangent to
the contact structure along the boundary. Such annuli are the main building blocks
of closed convex surfaces in $\mathbb S^3$. We discuss the following specific question:
are the two boundary components of such an annulus always equivalent as Legendrian knots?

This question appears to be highly non-trivial. An attempt to answer it in the positive was made
in manuscript \cite{Gos}
but a complete proof was not given. We believe that the answer is actually
negative, but our attempt to find a \emph{simple} counterexample has failed. In a number of examples that
we tried the two boundary components of the annulus appeared to be Legendrian equivalent,
and a Legendrian isotopy was easily found. Namely, the boundary components
were transformed to each other by elementary moves preserving
the complexity of the diagram (exchange moves).

We propose here an example of an annulus of the above mentioned type
whose boundary components
cannot be transformed to each other without using stabilizations.
We conjecture that
the two Legendrian knot types in our example are different.

\begin{rema}
Another use of rectangular diagrams of links comes from their relation
to Floer homologies. As C.\,Manolescu, P.\,Ozsv\'ath, and S.\,Sarkar show in \cite{mos}
the definition of the knot Floer homology for links presented in the grid form can be given in purely combinatorial
terms. Up to this writing, we are not aware of any useful interaction between
rectangular diagrams of surfaces and algebraic theories. It would be interesting to see some.
\end{rema}

The paper is organized as follows. In Section~\ref{rds-sec} we introduce the main subject
of the paper, rectangular diagrams of surfaces, and other basic related objects.
We show that every isotopy class of a surface can be presented
by a rectangular diagram, but rectangular presentations of the boundary that will arise in this way
satisfy certain restrictions. In Section~\ref{legendrian} we discuss Legendrian links and graphs and
their presentations by rectangular diagrams. They are used in the formulation and
the proof of our main result---on the representability of convex surfaces by rectangular
diagrams---which occupy Section~\ref{giroux}. At the end of Section~\ref{giroux}
we discuss the above mentioned conjecture about annuli with Legendrian boundary.

\subsection*{Acknowledgement} We are indebted to our anonymous referee for very careful reading of our paper, which has
led to many clarifications in the text.

\section{Rectangular diagrams of a surface}\label{rds-sec}
\subsection{Definitions}
For any two distinct points $x$, $y$, say, of the oriented circle $\mathbb S^1$ we denote by $[x,y]$ the
closed arc of $\mathbb S^1$ with endpoints $x$, $y$ such
that $\partial[x,y]=y-x$ if it is endowed with the orientation inherited
from $\mathbb S^1$, see Fig.~\ref{interval}.
\begin{figure}[ht]
\centerline{\includegraphics{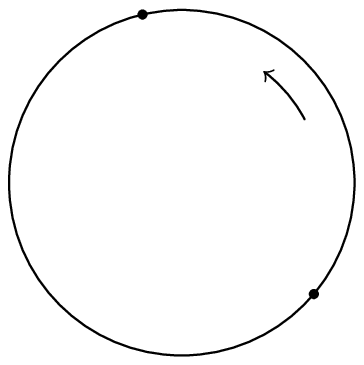}\put(-11,11){$x$}\put(-68,104){$y$}\put(-6,80){$[x,y]$}\put(-118,20){$[y,x]$}}
\caption{Intervals $[x,y]$ and $[y,x]$ on the oriented circle $\mathbb S^1$}\label{interval}
\end{figure}
By $(x,y)$ we denote the corresponding open arc: $(x,y)=[x,y]\setminus\{x,y\}$.

\begin{defi}
\emph{A rectangle} in the $2$-torus $\mathbb T^2=\mathbb S^1\times\mathbb S^1$ is
a subset of the form $[\theta_1,\theta_2]\times[\varphi_1,\varphi_2]$,
where $\theta_1\ne\theta_2$, $\varphi_1\ne\varphi_2$,
$\theta_1,\theta_2,\varphi_1,\varphi_2\in\mathbb S^1$.

Two rectangles $r_1$, $r_2$ are said to be \emph{compatible}
if their intersection satisfies one of the following:
\begin{enumerate}
\item $r_1\cap r_2$ is empty;
\item $r_1\cap r_2$ is a subset of vertices of $r_1$;
\item $r_1\cap r_2$ is a rectangle disjoint from the vertices of both rectangles $r_1$ and $r_2$.
\end{enumerate}
\emph{A rectangular diagram of a surface} is a collection $\Pi=\{r_1,\ldots,r_k\}$
of pairwise compatible rectangles in~$\mathbb T^2$ such
that every meridian $\{\theta\}\times\mathbb S^1$ and every longitude $\mathbb S^1\times\{\varphi\}$
of the torus contains at most two free vertices, where by \emph{a free vertex}
we mean a point that is a vertex of exactly one rectangle from $\Pi$.
\end{defi}

To represent such a diagram graphically, we draw $\mathbb T^2$ as a square with
identified opposite sides, so, some rectangles are cut into two or four pieces,
\begin{figure}[ht]
\centerline{\includegraphics[scale=0.9]{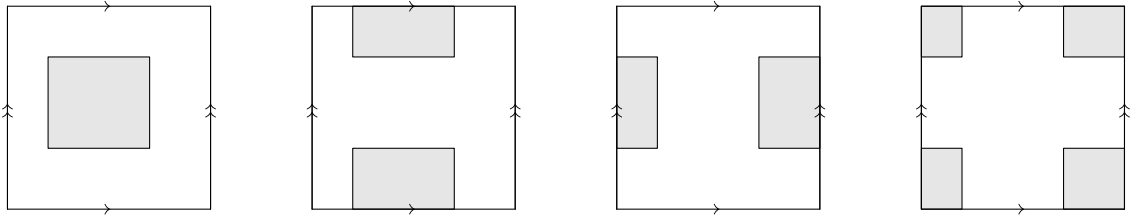}}
\caption{A rectangle in $\mathbb T^2$}\label{rectangles}
\end{figure}
see Fig.~\ref{rectangles}.

One can see that for any pair of compatible rectangles
one of the following three mutually exclusive cases occurs:
\begin{enumerate}
\item the rectangles are disjoint;
\item the rectangles share $1$, $2$, or $4$ vertices and are otherwise disjoint;
\item the rectangles have the form (possibly after exchanging them) $r_1=[\theta_1,\theta_2]\times[\varphi_1,\varphi_2]$,
$r_2=[\theta_3,\theta_4]\times[\varphi_3,\varphi_4]$ with
$$[\theta_1,\theta_2]\subset(\theta_3,\theta_4),\qquad
[\varphi_3,\varphi_4]\subset(\varphi_1,\varphi_2).$$
\end{enumerate}
\begin{figure}[ht]
\centerline{\includegraphics{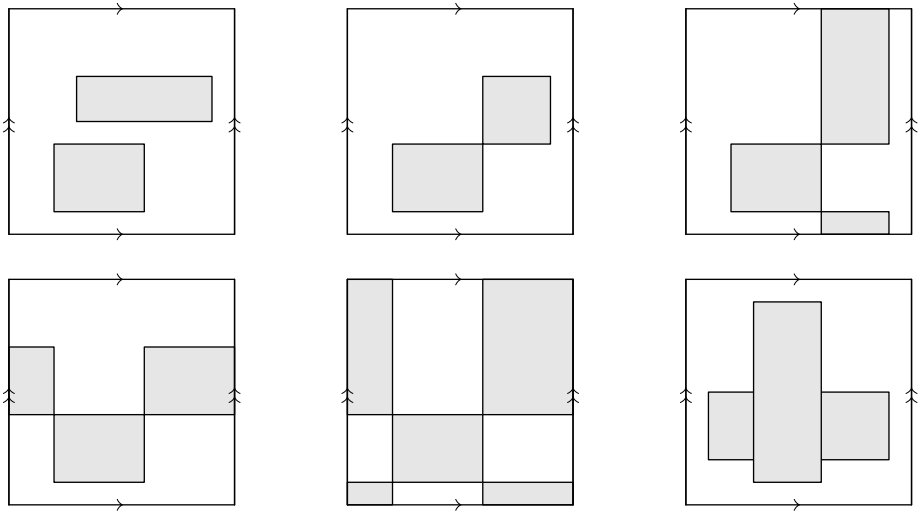}}
\caption{Compatible rectangles}\label{compatible}
\end{figure}
In the latter case we draw $r_1$ as if it passes over $r_2$, see Fig.~\ref{compatible}.

An example of a rectangular diagram of a surface is shown in Fig.~\ref{trefoil} on the left. The free vertices
are marked by small circles.
\begin{figure}[ht]
$$\begin{array}{ccc}
\includegraphics{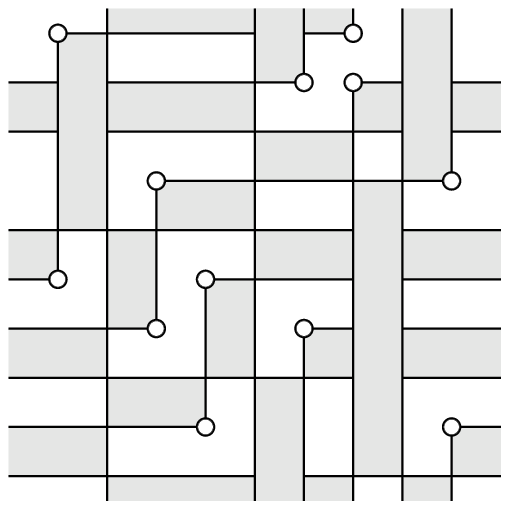}&\hskip0.5cm&\includegraphics{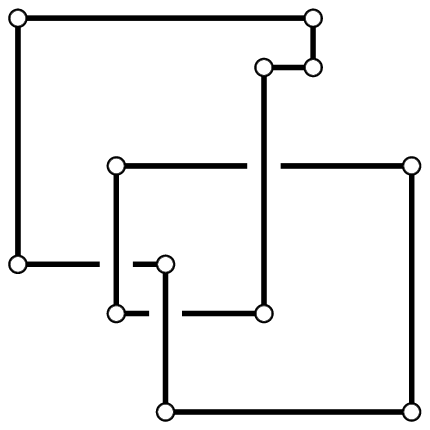}\\
\Pi&&\partial\Pi
\end{array}
$$
\caption{A rectangular diagram of a surface and its boundary}\label{trefoil}
\end{figure}

\begin{defi}
By \emph{a rectangular diagram of a link} we mean a finite set $R$ of points in $\mathbb T^2$
such that every meridian and every longitude of $\mathbb T^2$ contains no or exactly two points from $R$.
The points in $R$ are referred to as \emph{vertices of $R$}.

When $R$ is presented graphically in a square, the vertical and horizontal straight
line segments connecting two vertices of $R$ will be called \emph{the edges of $R$}.
Formally, \emph{an edge} of $R$ is a pair of vertices lying on the same longitude or meridian of $\mathbb T^2$.
Such vertices are said \emph{to be connected by an edge}, and this will have literal meaning in
the pictures.

A rectangular diagram of a link $R$ is said to be \emph{connected} or to be \emph{a rectangular
diagram of a knot} if the vertices of $R$ can be ordered so that any two consecutive vertices
are connected by an edge.

\emph{A connected component} of a rectangular diagram of a link is a non-empty
subset that is a rectangular diagram of a knot.
\end{defi}

\begin{defi}
Let $\Pi$ be a rectangular diagram of a surface.
The set of free vertices of $\Pi$  will be called \emph{the boundary of $\Pi$} and
denoted by $\partial\Pi$.
\end{defi}

It is readily seen that the boundary of a rectangular diagram of a surface is always a rectangular
diagram of a link. In particular, for any rectangle $r\subset\mathbb T^2$ the boundary $\partial\{r\}$
of the diagram $\{r\}$ is the set of vertices of $r$. It should not be confused with the boundary $\partial r$
of the rectangle itself, which is understood in the conventional sense.

Fig.~\ref{trefoil} shows a rectangular diagram of a surface (left) and its boundary (right) with
edges added. The reason for drawing some edges passing over the others will be explained below.

\subsection{Cusps and pizza slices}
Here we introduce the class of topological objects (curves and surfaces) we will mostly deal with.

\begin{defi}
Let $K$ be a piecewise smooth simple arc or simple closed curve in $\mathbb S^3$ and $p\in K$
a point distinct from the endpoints. We
say that $K$ has \emph{a cusp} at the point $p$ if $K$ admits a local parametrization
$\gamma:(-1,1)\rightarrow K$ such that
$$\lim_{t\rightarrow{-0}}\dot\gamma=-
\lim_{t\rightarrow{+0}}\dot\gamma\ne0,\quad\gamma(0)=p.$$
The curve $K$ is called \emph{cusp-free} if it has no cusps, and \emph{cusped} if
all singularities of $K$ are cusps.
\end{defi}

The links in $\mathbb S^3$ that we consider will always be cusp-free.

\begin{defi}\label{surf-with-corners}
We say that $F\subset\mathbb S^3$ is \emph{a surface with corners} if $F$ is a subset of a
$2$-dimensional submanifold $M\subset\mathbb S^3$ with (possibly empty) boundary such
that:
\begin{enumerate}
\item
the embedding $M\hookrightarrow\mathbb S^3$ is regular and of smoothness class $C^1$;
\item
 $F$ is bounded in $M$ by a (possibly empty) collection of mutually disjoint
piecewise smooth cusp-free simple closed curves.
\end{enumerate}
\end{defi}

This definition simply means that surfaces we want to consider may have corners at the boundary,
but in a broader sense than one usually means by saying `a manifold with corners'.
Namely, the angle at a corner can be arbitrary between $0$ and $2\pi$ (exclusive), not
necessarily between $0$ and $\pi$
see Fig.~\ref{corners}.
\begin{figure}[ht]
\centerline{\includegraphics{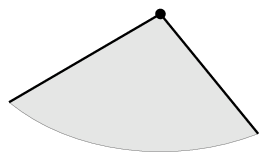}\hskip2cm\includegraphics{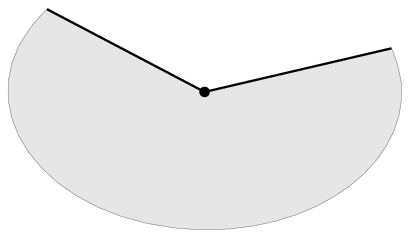}}
\caption{Arbitrary angles from $(0,2\pi)$ are allowed at corners}\label{corners}
\end{figure}
However, surfaces with corners are not allowed to spiral around a point at the boundary as shown in~Fig.~\ref{spiral}.
\begin{figure}[ht]
\centerline{\includegraphics{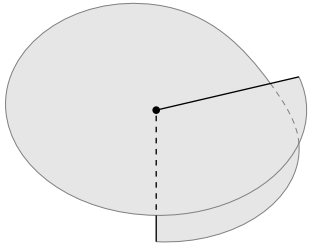}}
\caption{Spiralling around a boundary point is forbidden for a surface with corners}\label{spiral}
\end{figure}

For future use we also need the following definition.

\begin{defi}
By \emph{a pizza slice centered at $p\in\mathbb S^3$} we mean a disc that has
the form $F\cap\mathbb B_\varepsilon(p)$, where~$F$ is a surface with corners such that
$p\in\partial F$, and $\mathbb B_\varepsilon(p)$ is a closed ball centered
at $p$ with small enough radius $\varepsilon>0$ such that $\partial F$
has no singularities in $\mathbb B_\varepsilon(p)\setminus\{p\}$ and $\partial F\cap\mathbb B_\varepsilon(p)$
consists of a single arc. (The point $p$
may or may not be a singularity of $\partial F$.)

For such a pizza slice we also say that it is \emph{attached to the arc
$\partial F\cap\mathbb B_\varepsilon(p)$}.

Two pizza slices $\nabla_1$, $\nabla_2$ centered at the same point $p\in\mathbb S^3$
are said to be \emph{equivalent} if for small enough $\varepsilon>0$
the pizza slices $\nabla_i'=\nabla_i\cap\mathbb B_\varepsilon(p)$, $i=1,2$, are attached to the same
arc and for $x\in\nabla_1$ we have $d(x,\nabla_2)=o\bigl(d(x,p)\bigr)$ $(x\rightarrow p)$,
where $d(\,\cdot\,,\,\cdot\,)$ denotes the distance function.
\end{defi}

\subsection{3D realizations of rectangular diagrams}
With every rectangular diagram of a link or a surface we associate an
actual link or a surface, respectively, in $\mathbb S^3$ as we now describe.

We represent $\mathbb S^3$ as the join of two circles:
\begin{equation}\label{join}
\mathbb S^3=\mathbb S^1*\mathbb S^1=\bigl(\mathbb S^1\times\mathbb S^1\times[0,1]\bigr)/\bigl((\theta,\varphi,0)\sim(\theta',\varphi,0),\
(\theta,\varphi,1)\sim(\theta,\varphi',1)\bigr).
\end{equation}
We also identify it with the unit sphere in $\mathbb R^4$ as follows:
$$(\theta,\varphi,\tau)\mapsto\bigl(\cos(\pi\tau/2)\cos\varphi,\cos(\pi\tau/2)\sin\varphi,
\sin(\pi\tau/2)\cos\theta,\sin(\pi\tau/2)\sin\theta\bigr).$$
The triple $(\theta,\varphi,\tau)$ will be used as a coordinate system in $\mathbb S^3$.
The two circles defined by $\tau=0$ and $\tau=1$
will be denoted by $\mathbb S^1_{\tau=0}$ and $\mathbb S^1_{\tau=1}$, respectively.

\begin{defi}
By \emph{the torus projection} of a subset $X\subset\mathbb S^3$ we mean the image
of $X\setminus(\mathbb S^1_{\tau=0}\cup\mathbb S^1_{\tau=1})$ under the map
$\mathbb S^3\setminus(\mathbb S^1_{\tau=0}\cup\mathbb S^1_{\tau=1})\rightarrow\mathbb T^2$
defined by $(\theta,\varphi,\tau)\mapsto(\theta,\varphi)$.
\end{defi}

For a point $v=(\theta,\varphi)$ in $\mathbb T^2$ we denote by $\widehat v$
the image of the arc $v\times[0,1]\subset\mathbb S^1\times\mathbb S^1\times[0,1]$ in $\mathbb S^3=\mathbb S^1*\mathbb S^1$.

\begin{defi}
Let $R$ be a rectangular diagram of a link. By \emph{the link associated with $R$} we mean
the following union of arcs:
$\widehat R=\bigcup_{v\in R}\widehat v$.
\end{defi}

One can see that this union is indeed a collection of pairwise disjoint simple
closed piecewise smooth curves. Moreover, $\widehat R$ is a union of
piecewise geodesic closed curves, which are cusp-free.
One can also see that if $R'\subset R$ is a connected component of $R$, then $\widehat{R'}$
is a connected component of~$\widehat R$ and vice versa.

To get a conventional, planar picture of a link isotopic to $\widehat R$ one cuts $\mathbb T^2$ into
a square and then join the vertices of $R$ by edges, letting
the vertical edges pass over the horizontal ones, see the right picture in Fig.~\ref{trefoil}.
Finally, we remark that $R$ is the torus projection of $\widehat R$, and $\widehat R$
is the only link satisfying this property.

Cooking a surface out of a rectangular diagram of a surface is less visual, but the main principle
is similar: the surface associated with a rectangular diagram of a surface
should have the torus projection prescribed by the diagram.
The surface associated with a rectangular diagram will be composed of discs
associated with individual rectangles.

For a rectangle $r=[\theta_1,\theta_2]\times[\varphi_1,\varphi_2]\subset\mathbb T^2$,
the image of $r\times[0,1]$ in $\mathbb S^3$ under identifications~\eqref{join}
is the tetrahedron $[\theta_1,\theta_2]*[\varphi_1,\varphi_2]\subset
\mathbb S^1*\mathbb S^1$, which we denote by $\Delta_r$. We want to define $\widehat r$ as a disc
in $\Delta_r$ with boundary $\{\theta_1,\theta_2\}*\{\varphi_1,\varphi_2\}$ so
that the union of such discs over all rectangles of a rectangular
diagram of a surface yield a smoothly embedded surface.
This can be done in numerous ways among which we
choose one particularly convenient as it allows us to write down
an explicit parametrization of~$\widehat r$ and behaves
nicely in the respects addressed in Section~\ref{giroux}.

We denote by $h_r$ a bounded harmonic function on the interior~$\interior(r)$ of~$r$ that
tends to $0$ as $\varphi$ tends to $\varphi_1$ or~$\varphi_2$ while
$\theta\in(\theta_1,\theta_2)$ stays fixed, and
tends to~$1$ as $\theta$ tends to $\theta_1$ or $\theta_2$
while $\varphi\in(\varphi_1,\varphi_2)$ stays fixed.
Such a function exists and is unique, which follows from the Poisson integral formula and
the uniformization theorem.

\begin{rema}\label{wp}
The function $h_r$ admits an explicit presentation in terms of the Weierstrass elliptic function
$\wp(z)=\wp\bigl(z\,|\,\theta_2-\theta_1,\mathbbm i(\varphi_2-\varphi_1)\bigr)$
with half-periods $(\theta_2-\theta_1)$ and $\mathbbm i(\varphi_2-\varphi_1)$:
$$h_r(\theta,\varphi)=\frac1\pi\arg\frac{\wp(z_{\theta,\varphi})-\wp(z_{\theta_2,\varphi_2})}{
\bigl(\wp(z_{\theta,\varphi})-\wp(z_{\theta_1,\varphi_2})\bigr)
\bigl(\wp(z_{\theta,\varphi})-\wp(z_{\theta_2,\varphi_1})\bigr)},$$
where $z_{\theta,\varphi}=\theta-\theta_1+\mathbbm i(\varphi-\varphi_1)$,
$\theta_{1,2}$ and $\varphi_{1,2}$ are assumed to be reals satisfying
$\theta_1<\theta_2<\theta_1+2\pi$, $\varphi_1<\varphi_2<\varphi_1+2\pi$.
\end{rema}

\begin{defi}\label{tile-def}
We call the image in $\mathbb S^3$ under identifications~\eqref{join}
of the closure of the following open disc in $\mathbb T^2\times[0,1]$:
\begin{equation}\label{tileformula}
\left\{\bigl(v, \widetilde h_r(v)\bigr)\;;\;v\in\interior(r)\right\},\quad\text{where }
\widetilde h_r(v)=(2/\pi)\cdot\arctan\sqrt{\tan(\pi h_r(v)/2)},
\end{equation}
\emph{the tile associated with $r$} and denote it by $\widehat r$.

Let $\Pi$ be a rectangular diagram of a surface. We define  \emph{the surface associated with $\Pi$}
to be the union~$\widehat\Pi$ of the tiles associated with
rectangles from $\Pi$:
$\widehat\Pi=\bigcup_{r\in\Pi}\widehat r$.
\end{defi}

\begin{prop}
Let  $\Pi$ be a rectangular diagram of a surface. Then $\widehat\Pi$ is a surface
with corners in $\mathbb S^3$, and we have
$\partial\widehat\Pi=\widehat{\partial\Pi}$.
\end{prop}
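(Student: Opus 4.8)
The plan is to verify the two assertions---that $\widehat\Pi$ is a surface with corners and that $\partial\widehat\Pi=\widehat{\partial\Pi}$---by a careful local analysis. First I would study a single tile $\widehat r$. Using the explicit parametrization in~\eqref{tileformula}, one checks that $\widehat r$ is a $C^1$-embedded closed disc in the tetrahedron $\Delta_r$ whose boundary is the geodesic quadrilateral $\{\theta_1,\theta_2\}*\{\varphi_1,\varphi_2\}=\widehat{\partial\{r\}}$. The harmonicity of $h_r$ and the boundary behaviour prescribed just before Remark~\ref{wp} guarantee that the graph of $\widetilde h_r$ over $\interior(r)$ closes up smoothly along the four edges of $r$: as $\varphi\to\varphi_i$ one has $h_r\to 0$, so $\widetilde h_r\to 0$ and the disc meets the face $\tau=0$ of $\Delta_r$ along the arc $\widehat{[\theta_1,\theta_2]\times\{\varphi_i\}}$, contained in $\mathbb S^1_{\tau=0}$ after the identification; symmetrically along $\theta=\theta_i$. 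The function $\widetilde h_r$ is designed so that near an edge the disc approaches the boundary of $\Delta_r$ at a controlled rate making the surface $C^1$ (in fact tangent to the appropriate face), which is exactly why this particular $\widetilde h_r$ was chosen rather than an arbitrary disc; I would record this as a lemma about a single tile.

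Next I would pass to the union. Fix a point $q\in\widehat\Pi$ and consider its torus projection (or its image under the collapsing maps if $q$ lies on $\mathbb S^1_{\tau=0}\cup\mathbb S^1_{\tau=1}$). There are several cases according to which rectangles of $\Pi$ contribute to a neighbourhood of $q$, and the compatibility conditions in the definition of a rectangular diagram of a surface are precisely what make each case benign. If $q$ projects to an interior point of a single $r$, a neighbourhood of $q$ in $\widehat\Pi$ is an open piece of $\widehat r$. If $q$ projects to a point of $\partial r$ lying in the interior of an edge and on no other rectangle, then $q$ lies on $\mathbb S^1_{\tau=0}$ or $\mathbb S^1_{\tau=1}$ and $q$ is a boundary point of $\widehat\Pi$---this is the source of the free vertices and of $\widehat{\partial\Pi}$. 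If $q$ projects to a point shared by the boundaries of two compatible rectangles in the ``crossing'' configuration (case~(3) of the trichotomy), the two tiles lie in the interiors of transverse tetrahedra and the corresponding $\widehat r_1$, $\widehat r_2$ are disjoint there, so nothing happens. The essential gluing happens when two or more rectangles share a vertex: one must check that the corresponding tiles fit together along the common arc $\widehat v$ (or at the common point on $\mathbb S^1_{\tau=0}\cup\mathbb S^1_{\tau=1}$) to form a $C^1$ surface, possibly with a corner in the sense of Definition~\ref{surf-with-corners}. The restriction that each meridian and each longitude contain at most two free vertices---equivalently, the local combinatorics at any such shared vertex is one of a short explicit list---is what prevents the ``spiralling'' forbidden in Fig.~\ref{spiral} and keeps the total angle in $(0,2\pi)$.

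I expect the main obstacle to be exactly this last point: proving $C^1$-smoothness of $\widehat\Pi$ across the arcs $\widehat v$ where several tiles meet, and identifying precisely which vertex-configurations can occur. The key technical input is the boundary asymptotics of $\widetilde h_r$ near a vertex of $r$: one needs to show that two tiles sharing a vertex $v=(\theta_i,\varphi_j)$ approach the arc $\widehat v$ with matching tangent planes, i.e.\ that the one-sided limits of the derivatives of the two graph functions agree along $\widehat v$, so that the union is $C^1$ (this is where the $\arctan\sqrt{\tan(\cdot)}$ reparametrization earns its keep, by normalizing the rate of approach). Once the local model at every point is established, the two conclusions follow: $\widehat\Pi$ lies on a $C^1$ submanifold $M$ (take $M$ to be $\widehat\Pi$ itself if $\partial\Pi=\varnothing$, or a slight collar extension otherwise), its boundary curves are piecewise geodesic and cusp-free because they are built from arcs $\widehat v$ with $v\in\partial\Pi$ meeting at the prescribed angles, and the identification $\partial\widehat\Pi=\widehat{\partial\Pi}$ is read off directly from the case analysis, since a point of $\widehat\Pi$ is a boundary point if and only if its projection is a free vertex of $\Pi$.
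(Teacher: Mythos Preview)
Your strategy---analyze one tile, then study how compatible tiles glue, then read off the boundary---is exactly the paper's. Two concrete points, however, need correction.

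First, the geometry of a single tile is partly inverted in your description. Under the identifications~\eqref{join} each horizontal edge $[\theta_1,\theta_2]\times\{\varphi_i\}\times\{0\}$ collapses to a \emph{single point} of $\mathbb S^1_{\tau=0}$, not an arc; likewise for vertical edges. Thus the four \emph{vertices} of $\widehat r$ correspond to the \emph{sides} of $r$, while the four \emph{sides} of $\widehat r$ are the arcs $\widehat v$ for $v$ a \emph{vertex} of $r$. Your case ``$q$ projects to an interior point of an edge of $r$ lying on no other rectangle'' therefore describes a vertex of a tile, not a boundary arc, and is not the source of $\widehat{\partial\Pi}$; the actual boundary is $\bigcup_{v\text{ free}}\widehat v$. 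The ``at most two free vertices per meridian/longitude'' condition then enters at points $p\in\mathbb S^1_{\tau=0}\cup\mathbb S^1_{\tau=1}$: the rectangles of $\Pi$ touching $p$ have sides on the corresponding longitude (or meridian) with pairwise disjoint interiors by compatibility, and the free-vertex condition forces their union to be either the whole circle or a single arc, so the pizza slices at $p$ assemble into a full disc or a single slice with angle in $(0,2\pi)$.

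Second, in the crossing configuration (case~(3)) the tetrahedra $\Delta_{r_1}$, $\Delta_{r_2}$ are not ``transverse'': they overlap in $\Delta_{r_1\cap r_2}$, and both tiles project onto $r_1\cap r_2$. Showing $\widehat r_1\cap\widehat r_2=\varnothing$ needs an argument: one compares $h_{r_1}$ and $h_{r_2}$ on $\partial(r_1\cap r_2)$---on each side one of them attains its boundary value $0$ or $1$ while the other is strictly between---and concludes $h_{r_1}\ne h_{r_2}$ throughout $r_1\cap r_2$ by the maximum principle for harmonic functions. Your sentence ``the tiles lie in the interiors of transverse tetrahedra \ldots\ so nothing happens'' skips this step.
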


\begin{proof}
First consider a single rectangle $r=[\theta_1,\theta_2]\times[\varphi_1,\varphi_2]$. The
function $h_r$ can be extended continuously and smoothly to $\partial r$ except at the
vertices of $r$, where it jumps by $\pm1$. The closure $\Gamma$ of the graph of $h_r$
is a smooth image of an octagon. Its boundary $\partial\Gamma$ consists of
four straight line segments parallel to the sides of $r$ and another four straight line segments
hanging over the vertices of $r$, see Fig.~\ref{h(theta,phi)}.

\begin{figure}[ht]
\includegraphics[scale=0.75]{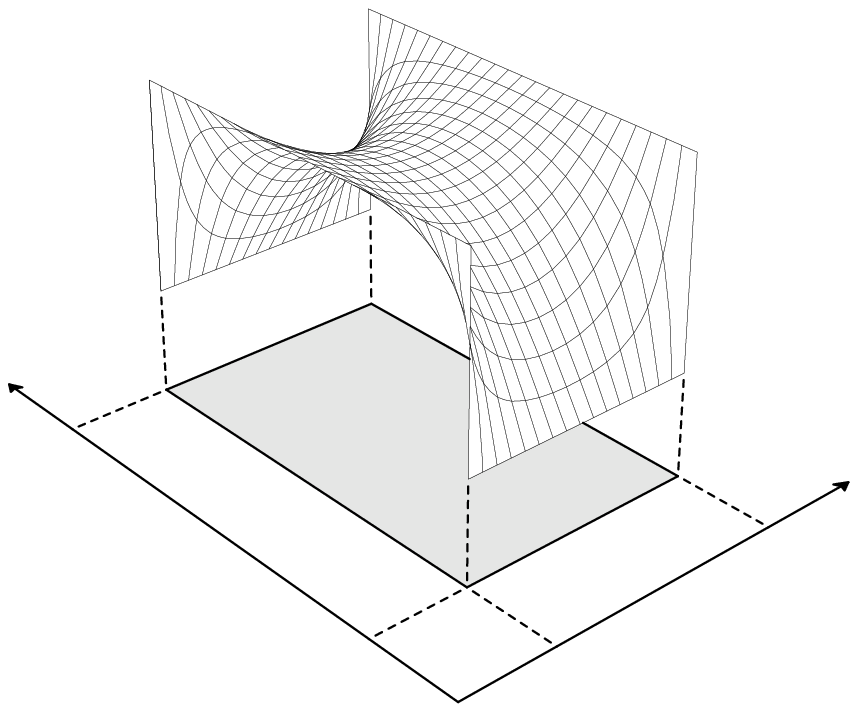}\put(0,40){$\theta$}%
\put(-62,5){$\theta_1$}\put(-17,30){$\theta_2$}\put(-112,8){$\varphi_1$}\put(-175,53){$\varphi_2$}%
\put(-189,62){$\varphi$}\put(-95,55){$r$}\put(-30,100){$\Gamma$}
\hskip1cm
\includegraphics[scale=0.75]{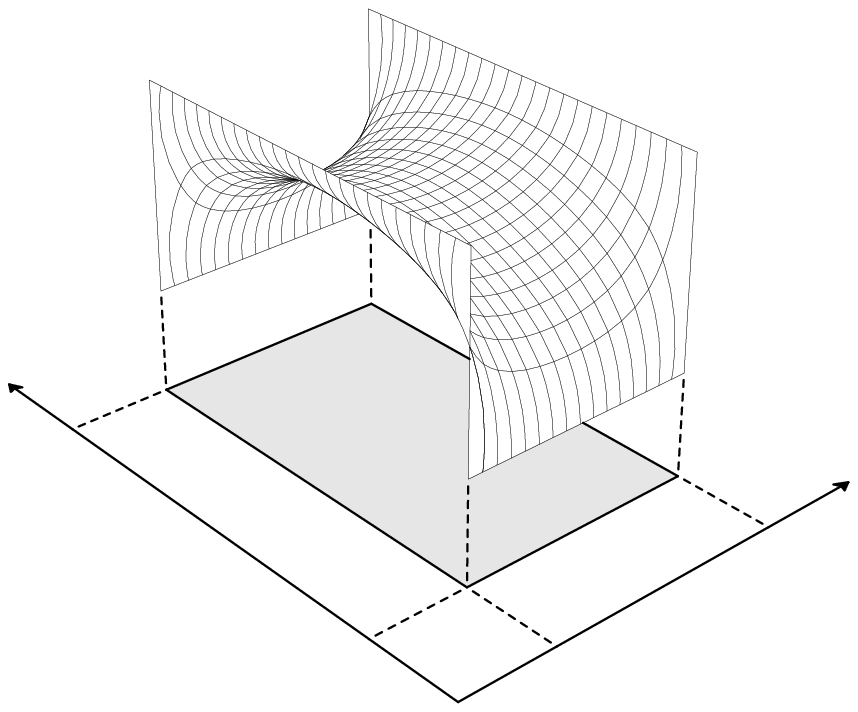}\put(0,40){$\theta$}%
\put(-62,5){$\theta_1$}\put(-17,30){$\theta_2$}\put(-112,8){$\varphi_1$}\put(-175,53){$\varphi_2$}%
\put(-189,62){$\varphi$}\put(-95,55){$r$}\put(-30,100){$\widetilde\Gamma$}
\caption{The graphs $\Gamma$ and $\widetilde\Gamma$ of the functions $h_r(\theta,\varphi)$ and
$\widetilde h_r(\theta,\varphi)$, respectively}\label{h(theta,phi)}
\end{figure}

Along each of the latter four straight line segments, $\Gamma$ is tangent to a helicoid that is
independent of the size of $r$ (i.e.\ of $\theta_2-\theta_1$ and $\varphi_2-\varphi_1$):
the tangent plane to $\Gamma$ at the point $(\theta_i,\varphi_j,h)$, $i,j\in\{1,2\}$, $h\in[0,1]$,
is $\ker\bigl(\cos(\pi h/2)\,d\varphi-(-1)^{i-j}\sin(\pi h/2)\,d\theta\bigr)$.
Tangency with a helicoid is a general property of the graphs of bounded harmonic functions
on a polygon that are constant on each side of the polygon and have jumps at corners. Near
each corner $w_0\in\mathbb C$ such a function $h$ has a form $h(w)=a+b\Arg(w-w_0)+o(|w|)$,
where $a,b\in\mathbb R$ and $w$ is a complex coordinate in the plane. The tangency
with a helicoid can even be shown to be of the second order.

By composing $h_r$ with the map $\zeta: x\mapsto(2/\pi)\cdot\arctan\sqrt{\tan(\pi x/2)}$,
which is a monotonic function $[0,1]\rightarrow[0,1]$
such that $\zeta(1-x)=1-\zeta(x)$ and $\zeta'(0)=\zeta'(1)=\infty$, we get the function $\widetilde h_r$,
whose graph~$\widetilde\Gamma$ is also a curved octagon with the same boundary as $\Gamma$, but now it
has an additional property that the tangent plane to~$\widetilde\Gamma$ is orthogonal to the
$(\theta,\varphi)$-plane along the whole of the boundary $\partial\widetilde\Gamma$.
The specific choice of $\zeta$ will play an important role in the sequel.

Identification~\eqref{join} takes $\widetilde\Gamma$ to a disc with corners in $\mathbb S^3$, which is the tile $\widehat r$.
Fig.~\ref{tile-curved} shows a tile projected stereographically into $\mathbb R^3$.
(The reader is alerted that the `coordinate grids' in Fig.~\ref{h(theta,phi)},
which are added to visualize the surfaces, are not related to that in  Fig.~\ref{tile-curved}.)
The boundary $\partial\widehat r$ has exactly two points at each of the circles $\mathbb S^1_{\tau=0}$ and $\mathbb S^1_{\tau=1}$,
and at these points the tangent plane to $\widehat r$ is orthogonal to the corresponding circle.
\begin{figure}[ht]
\includegraphics[width=150pt]{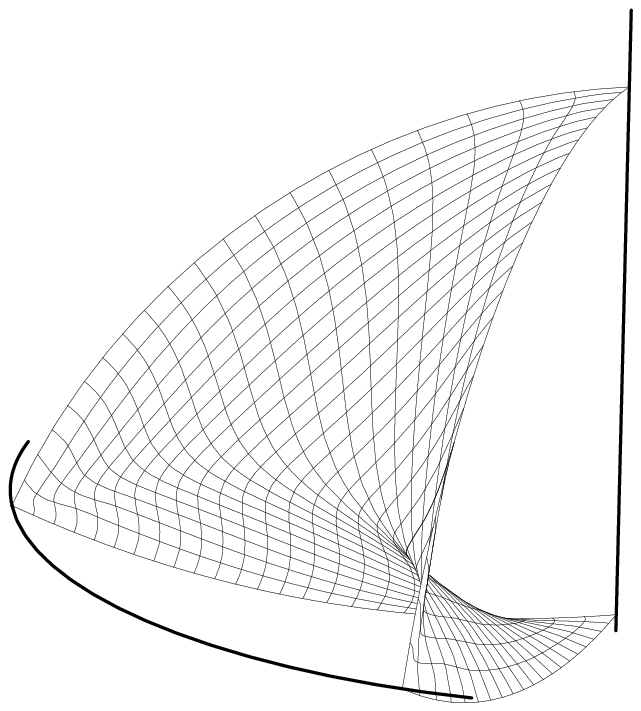}\put(-2,20){$\varphi_1$}\put(1,146){$\varphi_2$}%
\put(-158,42){$\theta_1$}\put(-60,-5){$\theta_2$}\put(-1,73){$S^1_{\tau=0}$}\put(-130,10){$S^1_{\tau=1}$}
\caption{A tile}\label{tile-curved}
\end{figure}

These points will be referred to as \emph{the vertices of $\widehat r$}, and the four
arcs of the form $\widehat v$, where $v\in\{\theta_1,\theta_2\}\times\{\varphi_1,\varphi_2\}$,
which form the boundary of $\widehat r$, \emph{the sides of $\widehat r$}.

By construction we also have the following.

\begin{lemm}\label{tileboundary}
The tile $\widehat r$ is tangent to the plane field
\begin{equation}\label{xi-}
\xi_-=\ker\bigl(\cos^2(\pi\tau/2)\,d\varphi-\sin^2(\pi\tau/2)\,d\theta\bigr)
\end{equation}
along the sides $\widehat{(\theta_1,\varphi_1)}$ and $\widehat{(\theta_2,\varphi_2)}$,
and to the plane field
\begin{equation}\label{xi+}\xi_+=\ker\bigl(\cos^2(\pi\tau/2)\,d\varphi+\sin^2(\pi\tau/2)\,d\theta\bigr)
\end{equation}
along the sides $\widehat{(\theta_1,\varphi_2)}$ and $\widehat{(\theta_2,\varphi_1)}$.
\end{lemm}

Let $r_1$ and $r_2$ be two compatible rectangles. If they are disjoint, then either the tiles $\widehat{r_1}$
and $\widehat{r_2}$ are disjoint or they share one or two vertices. In the latter case they
have the same tangent plane at the common vertices.

If $r_1$ and $r_2$ share a vertex $v$, say, then $\widehat{r_1}$ and $\widehat{r_2}$ share a side,
which is $\widehat v$, and all other tiles are disjoint from $\interior(\widehat v)$.
The tiles $\widehat{r_1}$ and $\widehat{r_2}$ approach $\widehat v$ from opposite sides and have the same tangent
plane at each point of $\widehat v$. Moreover, the intersection $\widehat{r_1}\cap\widehat{r_2}$
has the form $\cup_{v\in r_1\cap r_2}\widehat v$, where $r_1\cap r_2$ consists of common vertices of $r_1$
and $r_2$ (one, two, or four of them). Thus, the direction of the tangent plane to $\widehat{r_1}\cup\widehat{r_2}$
has no discontinuity at $\widehat{r_1}\cap\widehat{r_2}$.

Figure~\ref{two-tiles1} shows the preimages in $\mathbb T^2\times[0,1]$ of two tiles sharing a single side,
and the tiles themselves.
\begin{figure}[ht]
\centerline{\includegraphics[width=200pt]{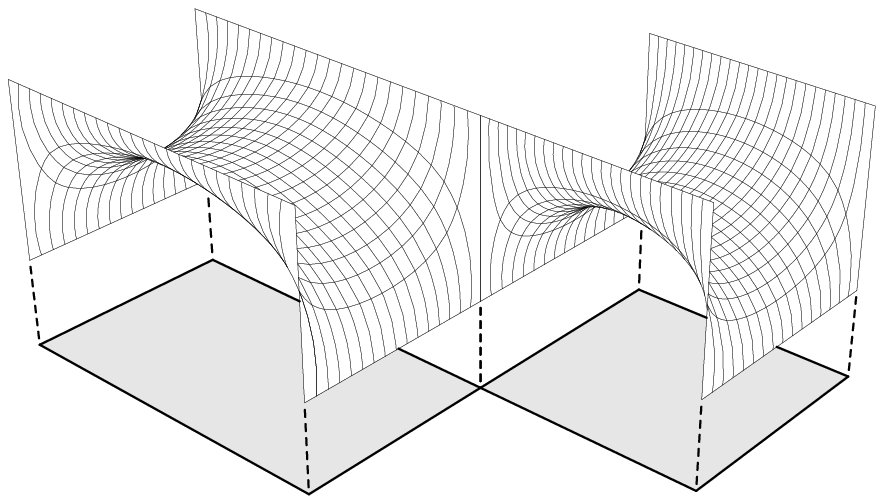}\put(-148,28){$r_1$}\put(-92,16){$v$}\put(-52,25){$r_2$}
\hskip1cm
\includegraphics[width=150pt]{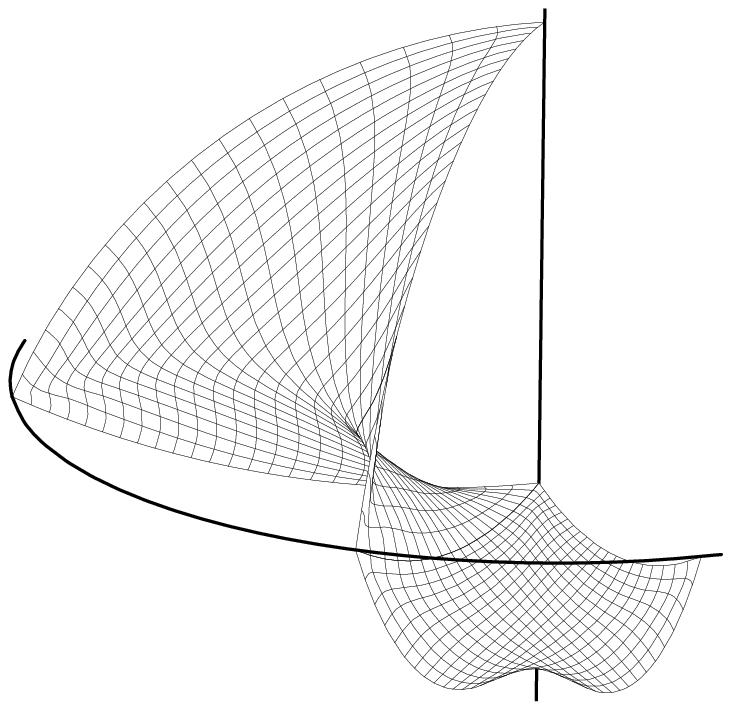}}
\caption{Two tiles sharing a side}\label{two-tiles1}
\end{figure}

Finally, if $r=r_1\cap r_2$ is a rectangle, then the tiles $\widehat{r_1}$ and $\widehat{r_2}$ are disjoint.
Indeed, by construction, at the horizontal sides of $r$ one of the functions $h_{r_1}$, $h_{r_2}$ vanishes whereas the other
is strictly positive, and at the vertical sides of $r$ one of these functions equals $1$ and
the other is strictly less than~$1$. Thus, we have $h_{r_1}(x)\ne h_{r_2}(x)$ for all $x\in\partial r$, and,
therefore, the inequality $h_{r_1}(x)\ne h_{r_2}(x)$ also holds for all $x\in r$ by the maximum principle
for harmonic functions.
Fig.~\ref{two-tiles2} shows the relative position of the graphs of the functions
$\widetilde h_{r_1}$ and $\widetilde h_{r_2}$
and the associated tiles in this case.
\begin{figure}[ht]
\centerline{\includegraphics[width=150pt]{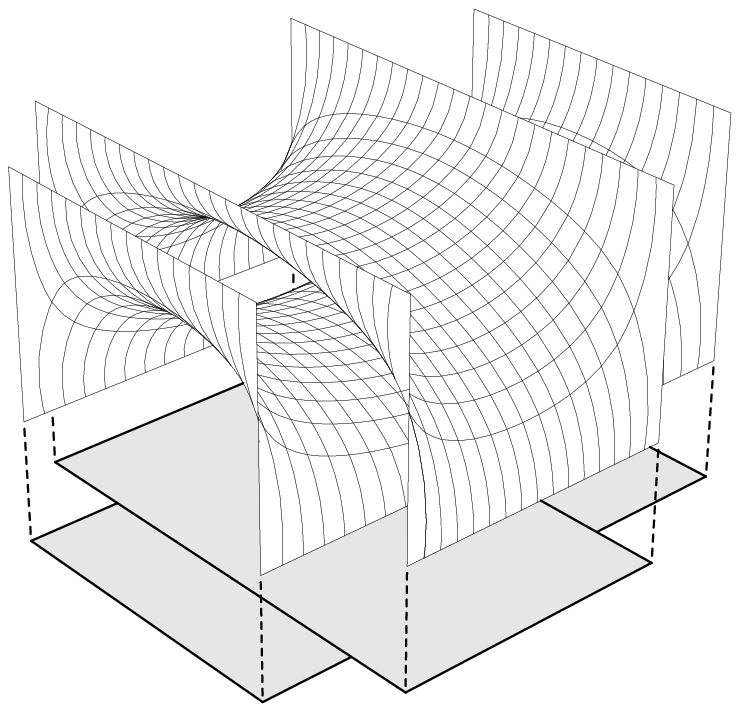}\put(-115,20){$r_1$}\put(-55,22){$r_2$}
\hskip1cm
\includegraphics[width=200pt]{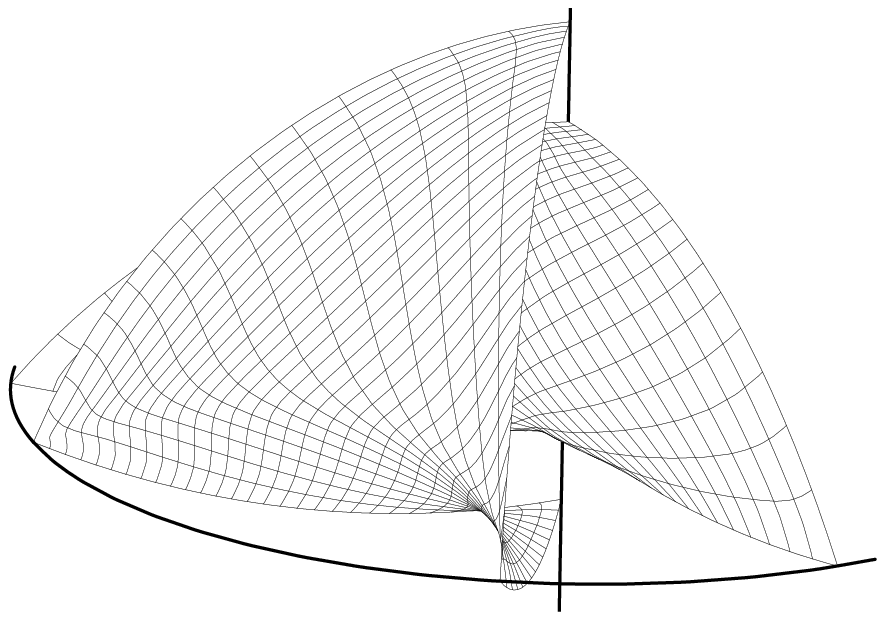}}
\caption{Two tiles corresponding to overlapping rectangles}\label{two-tiles2}
\end{figure}

Thus, in general, intersection of any two tiles $\widehat{r_1}$, $\widehat{r_2}$ of $\widehat\Pi$ bounds
to the intersection of their boundaries, and the direction of the tangent plane to $\widehat\Pi$ is continuous
on the whole of $\widehat\Pi$. It remains to examine the boundary of $\widehat\Pi$ and to verify that no
singularities other than corners occur at $\partial\widehat\Pi$.

The boundary of each tile $\widehat r$ consists of four arcs of the form $\widehat v$, where $v$ is a vertex of $r$.
If $v$ is a vertex of two rectangles from $\Pi$, then two tiles of $\widehat\Pi$ are attached to $\widehat v$,
so, the interior of $\widehat v$ is disjoint from $\partial\widehat\Pi$. Thus, $\partial\widehat\Pi$
consists of all the arcs $\widehat v$ such that $v$ is a free vertex of $\Pi$.
This implies $\partial\widehat\Pi=\widehat{\partial\Pi}$.

Let $p$ be a point in $\widehat\Pi\cap\mathbb S^1_{\tau=0}$. It corresponds
to a longitude of the torus $\mathbb T^2$. Let this longitude
be $\varphi=\varphi_0$ and denote it by $\ell$. The tiles having $p$ as a vertex are associated with rectangles
of the form $[\theta_1,\theta_2]\times[\varphi_0,\varphi_1]$
or $[\theta_1,\theta_2]\times[\varphi_1,\varphi_0]$. Let $r_1,\ldots,r_m$ be all such
rectangles.

For small enough $\varepsilon>0$, the intersections $\nabla_i=\widehat r_i\cap\mathbb B_\varepsilon(p)$ are all pizza
slices, and the only our concern is what their union looks like.

Since the rectangles $r_1,\ldots,r_m$ are pairwise compatible, the arcs $r_i\cap\ell$ have pairwise
disjoint interiors. The last condition in the definition of a rectangular diagram of
a surface guarantees that $\Bigl(\bigcup_{i=1}^mr_i\Bigr)\cap\ell$ is either
the whole of $\ell$ or a single arc of the form~$[\theta',\theta'']\times\{\varphi_0\}$.

In the former case, the union $\cup_{i=1}^m\nabla_i$ is `the whole pizza', i.e. a disc contained
in the interior of $\widehat\Pi$. In the latter case,
this union is a pizza slice with the angle $\alpha\in(0,2\pi)$, $\alpha\equiv\theta''-\theta'(\mathrm{mod}\,2\pi)$,
at $p$.

The intersection of $\widehat\Pi$ with the circle $\mathbb S^1_{\tau=1}$ is considered similarly.
\end{proof}

\begin{rema}
Recovering a surface from a rectangular diagram of a surface as described
above may look somewhat counterintuitive. Indeed, if $r$ is a rectangle, the vertices of the
tile $\widehat r$ correspond to the sides of $r$ and the sides
of~$\widehat r$ to the vertices of $r$.

However, as we will see, presentation of surfaces by rectangular
diagram is quite practical. In particular, the class of all surfaces
of the form $\widehat\Pi$ is such that each surface in it is uniquely
recovered from its torus projection, whose closure is the union $\bigcup_{r\in\Pi}r$.
\end{rema}

\subsection{Orientations} Quite often one needs to endow surfaces and links with an orientation.
Here is how to do this in the language of rectangular diagrams.

\begin{defi}
By \emph{an oriented rectangular diagram of a link} we mean a pair $(R,\epsilon)$
in which $R$ is a rectangular diagram of a link and $\epsilon$ is
an assignment  of `$+$' or `$-$' to every vertex so that the endpoints of every edge are
assigned different signs. The vertices with `$+$' assigned are referred to as \emph{positive}
and those with `$-$' assigned \emph{negative}. We also say that $\epsilon$
is \emph{an orientation} of $R$.
\end{defi}

Every orientation $\epsilon$ of a rectangular diagram of a link $R$ defines an
orientation of the link $\widehat R$ by demanding
that $\tau$ increases on the oriented arc $\widehat v\subset\widehat R$
whenever $v$ is a positive vertex of $R$ and
decreases otherwise. One can readily see that this gives a one-to-one correspondence
between orientations of a rectangular diagram of a link and those of the link associated with it.

\begin{rema}
From the combinatorial point of view oriented rectangular diagrams of links is the same thing
as grid diagrams, with X's in the latter corresponding to positive vertices and O's to negative ones.
\end{rema}

Now we introduce orientations for rectangular diagrams of surfaces.

The pair of functions $(\theta,\varphi)$ is a local coordinate system in the
interior of each tile. If two tiles $t_1$, $t_2$, say, share a side, then
the orientations defined by this system in $t_1$ and $t_2$
disagree at $t_1\cap t_2$, see Fig.~\ref{two-tiles1}. So, it is natural
to specify an orientation of a rectangular diagram of a surface as follows.

\begin{defi}
\emph{An oriented rectangular diagram of a surface} is a pair $(\Pi,\epsilon)$ in which $\Pi$ is a rectangular diagram
of a surface and $\epsilon$ is an assignment `$+$' or `$-$' to
every rectangle in $\Pi$ so that the signs assigned to any two rectangles sharing
a vertex are different. The rectangles with `$+$' assigned are
then called \emph{positive} and the others \emph{negative}. The assignment
$\epsilon$ is referred to as \emph{an orientation} of $\Pi$.
\end{defi}

Like in the case of links, orientations of any rectangular diagram of a surface $\Pi$ are put in one-to-one correspondence with
orientations of the surface $\widehat\Pi$ by demanding $(\theta,\varphi)$ being a positively oriented
coordinate pair in the tiles corresponding to positive rectangles, and negatively oriented coordinate pair
in the tiles corresponding to negative rectangles.
In particular, $\Pi$ does not admit an orientation if and only if $\widehat\Pi$ is a non-orientable surface.

In what follows, to simplify the notation, we omit an explicit reference to orientations.

\subsection{Framings} We will need a slightly more general notion of a framing of a link than
the one typically uses in knot theory.

\begin{defi}
Let $L$ be a cusp-free piecewise smooth link in $\mathbb S^3$. By \emph{a framing of $L$} we
mean an isotopy class (relative to $L$) of surfaces $F$ with corners such that
\begin{enumerate}
\item
$F$ is a union of pairwise disjoint annuli $A_1,\ldots,A_k$;
\item
for each $i=1,\ldots,k$, one of the connected components of $\partial A_i$
is a component of $L$ and the other is smooth and disjoint from $L$.
\end{enumerate}
\end{defi}

If $f$ is a framing of a link $L$ and $F$ is a collection of annuli representing $f$,
then we denote by $L^f$ the link $\partial F\setminus L$. Roughly speaking,
$L^f$ is obtained from $L$ by shifting along the framing $f$.
If $L$ is oriented, then $L^f$ is assumed to be oriented coherently with $L$.

We also denote by $\langle f\rangle$ the linking number
$\lk(L,L^f)$, which is clearly an invariant of $f$.
If $L$ is a smooth knot then framings $f$ of $L$ are classified by
$\langle f\rangle\in\mathbb Z$ (which is independent on the orientation of
the knot).

But if $L$ has singularities, a framing contains more information.
Indeed, at a singularity of $L$, the tangent plane to any surface $F$
with corners such that $L\subset\partial F$ is prescribed by $L$. So,
it cannot be changed by an isotopy of $F$ within the class of surfaces
with corners.

The surface $F$ can approach such a singularity from two
sides. Formally, this means that if $p$ is a singularity of $\partial F$
and $\gamma$ is an arc of the form $\partial F\cap\mathbb B_\varepsilon(p)$
with small enough $\varepsilon>0$, then there are two equivalence
classes of pizza slices attached to $\gamma$, and the one that
the surface $F$ realizes is an invariant of the framing.

Let a framing $f_0$ of $L$ be fixed.
If $f$ is another framing of $L$, then the twist of $f$
relative to~$f_0$ along any arc connecting two singularities of $\partial F$ is a multiple of $\pi$
and is also a topological invariant of $f$.

Clearly, these invariants determine the framing, and there are some obvious restrictions on them, which
we need not to discuss.

If $L'\subset L$ is a sublink, then the restriction $f|_{L'}$ of
a framing $f$ is defined in the obvious way. Clearly, any framing
of $L$ is defined uniquely by its restriction to every connected component of $L$.

\begin{defi}
If $F$ is a surface with corners we call the framing of $\partial F$ defined by a collar
neighborhood of $\partial F$ \emph{the boundary framing induced by $F$}.
\end{defi}

If $L$ is a sublink of $\partial F$, where $F$ is a surface
then we denote by $L^F$
any link contained in $F$ such that $L\cup L^F$
bounds a collar neighborhood of $L$ in $F$.

\begin{defi}\label{framing-def}
Let $R$ be a rectangular diagram of a link. By \emph{a framing of $R$} we mean
an ordering in each pair of vertices of $R$ forming an edge of $R$.
\end{defi}

For specifying a framing of a rectangular diagram of a link in a picture we proceed as follows.
For every edge $\{v_1,v_2\}$, we draw the arc $[v_1,v_2]$ if $v_2>v_1$ in the given framing
and the arc $[v_2,v_1]$ if $v_1>v_2$. Here by $[v_1,v_2]$ we mean
$[\theta_1,\theta_2]\times\{\varphi_0\}\subset\mathbb T^2$
if $\{v_1,v_2\}=\{\theta_1,\theta_2\}\times\{\varphi_0\}$
is a horizontal edge and $\{\theta_0\}\times[\varphi_1,\varphi_2]$ if $\{v_1,v_2\}=
\{\theta_0\}\times\{\varphi_1,\varphi_2\}$ is a vertical one.
The arcs are assumed to be drawn on $\mathbb T^2$, so in the actual planar
picture some of these arcs get cut into two pieces, see Fig.~\ref{framing-edge}.
\begin{figure}[ht]
$$\begin{array}{ccccccc}\includegraphics{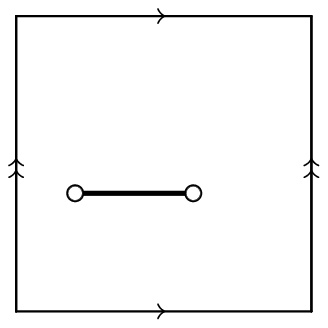}\put(-75,45){$v_1$}\put(-40,45){$v_2$}&&
\includegraphics{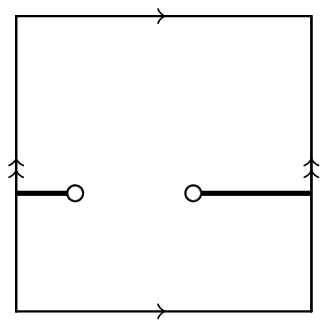}\put(-75,45){$v_1$}\put(-40,45){$v_2$}&&
\includegraphics{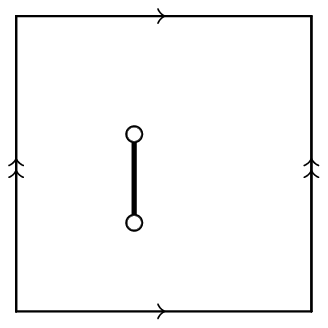}\put(-67,26){$v_1$}\put(-68,53){$v_2$}&&
\includegraphics{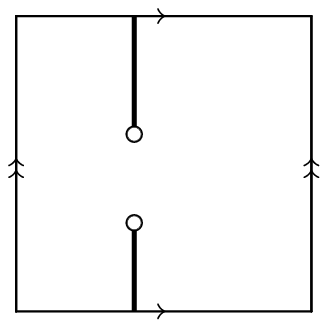}\put(-67,26){$v_1$}\put(-68,53){$v_2$}\\
v_2>v_1&&v_1>v_2&&v_2>v_1&&v_1>v_2
\end{array}$$
\caption{Specifying a framing in the picture of a rectangular diagram of a link}\label{framing-edge}
\end{figure}
As always, we draw vertical arcs over horizontal ones. The right
picture in Fig.~\ref{boundary-framing} shows
an example of a framed rectangular diagram of a link.
\begin{figure}[ht]
\centerline{\includegraphics[width=120pt]{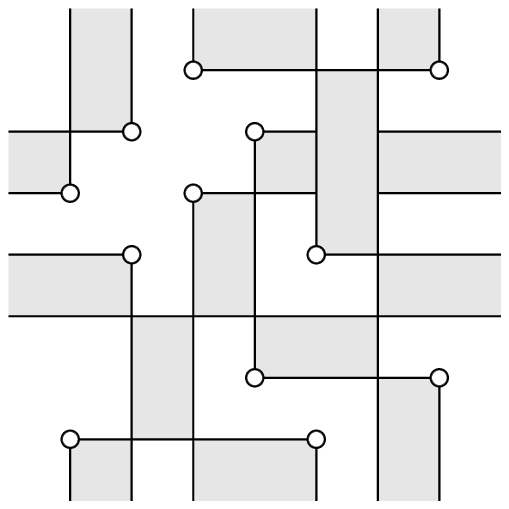}\hskip2cm
\includegraphics[width=120pt]{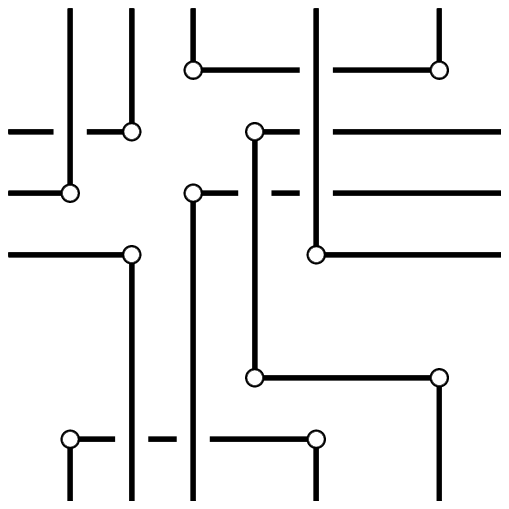}}
\caption{A rectangular diagram of a surface and its boundary with
boundary framing}\label{boundary-framing}
\end{figure}

By definition, any rectangular diagram of a link supports only finitely many
framings. They encode framings of the corresponding
link in $\mathbb S^3$ that are not `twisted too much'. We now define them formally.

\begin{defi}
Let $R$ be a rectangular diagram of a link. A framing of $\widehat R$
is called \emph{admissible} if it can be presented by a surface $F$ tangent
to one of the plane fields~$\xi_\pm$ defined by \eqref{xi-} and~\eqref{xi+} at every point $p\in\widehat R$.
\end{defi}

With every admissible framing of $\widehat R$ we associate (in general, non-uniquely) a framing of $R$ as follows.

Let $F$ be a union of annuli that represents an admissible framing $f$ of $\widehat R$
such that at every point $q\in\widehat R$ the surface $F$ is tangent to $\xi_+$ or $\xi_-$.
Let $p$ be a point from $\widehat R\cap(\mathbb S^1_{\tau=0}\cup\mathbb S^1_{\tau=1})$.
The tangent plane to $F$ at~$p$
is orthogonal to the corresponding $\mathbb S^1$ since so are $\xi_\pm$.

The intersection of $F$ with a small ball $\mathbb B_\varepsilon(p)$ is a pizza slice,
which is equivalent to one of the form
$[v_1,v_2]\times[0,\varepsilon]/{\sim}$ or $[v_1,v_2]\times[1-\varepsilon,1]/{\sim}$
(depending on whether $p\in\mathbb S^1_{\tau=0}$ or $p\in\mathbb S^1_{\tau=1}$),
where $\{v_1,v_2\}$ is the edge of $R$ corresponding to $p$,
and the equivalence~$\sim$ is given by~\eqref{join}. We denote
the pizza slice~$F\cap\mathbb B_\varepsilon(p)$ by $\nabla_p$.
The framing of $R$ associated with $f$ is defined
on the edge $\{v_1,v_2\}$ as $v_2>v_1$.

\begin{prop}
For a generic $R$,
the construction above defines a one-to-one correspondence between
admissible framings of $\widehat R$ and framings of $R$.
In general, this correspondence is one-to-many.
\end{prop}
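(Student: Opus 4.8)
The plan is to analyze, for a fixed rectangular diagram of a link $R$, the structure of admissible framings of $\widehat R$ and show how the combinatorial data of a framing of $R$ captures exactly that structure. First I would observe that, by the definition of a framing, it suffices to work one connected component of $\widehat R$ at a time, and in fact to understand the local behavior near each point of $\widehat R\cap(\mathbb S^1_{\tau=0}\cup\mathbb S^1_{\tau=1})$ together with the global twisting along the arcs of $\widehat R$ joining consecutive such points. So the proof naturally splits into a local part (at the vertices) and a global part (along the edges).

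For the local part, fix $p\in\widehat R\cap\mathbb S^1_{\tau=0}$ corresponding to an edge $\{v_1,v_2\}$ of $R$, say a horizontal one $[\theta_1,\theta_2]\times\{\varphi_0\}$. I would show that any surface $F$ representing an admissible framing, being tangent to $\xi_+$ or $\xi_-$ at $p$, has tangent plane at $p$ orthogonal to $\mathbb S^1_{\tau=0}$, hence $F\cap\mathbb B_\varepsilon(p)$ is a pizza slice attached to the arc $\widehat{v_1}\cup\widehat{v_2}$ near $\mathbb S^1_{\tau=0}$; up to equivalence such a pizza slice has one of the two forms $[v_1,v_2]\times[0,\varepsilon]/{\sim}$ or $[v_2,v_1]\times[0,\varepsilon]/{\sim}$, and these are inequivalent (they approach the circle from the two different sides, and the angle lies in $(0,\pi)$ in one case and $(\pi,2\pi)$—or rather they are distinguished by which of the two complementary arcs on the longitude they fill). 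This is precisely the choice of an ordering in the pair $\{v_1,v_2\}$, i.e.\ a framing of $R$ on that edge. Conversely, any such local model is realized by the tile $\widehat r$ for an appropriate rectangle $r$ having $v_1,v_2$ as vertices, which is tangent to $\xi_\pm$ by Lemma~\ref{tileboundary}; so every local choice is achievable. The fact that the signs must alternate along a component (each vertex of $R$ lies on exactly two edges, one horizontal and one vertical, and the two pizza slices at the corresponding endpoints of $\widehat v$ must fit together with a collar of $\widehat v$ in $F$) is an automatic consistency condition that any ordering satisfies trivially, since a framing of $R$ is just an independent ordering of each edge.

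For the global part, I would fix a reference admissible framing $f_0$ (say the one coming from some collection of tiles) and measure any other admissible framing $f$ by the relative twist along each arc of $\widehat R$ between consecutive vertices. The point of the hypothesis that $F$ is tangent to $\xi_+$ or $\xi_-$ everywhere on $\widehat R$ is that it pins down this twist up to the discrete data of which of the two fields is used at each vertex: between two consecutive vertices the framing must interpolate tangency conditions that are already determined, up to adding full twists, and admissibility forbids the extra full twists (this is the analogue of the statement in the text that admissible framings are those `not twisted too much'). Thus the relative twist is determined by the local pizza-slice data, i.e.\ by the framing of $R$, and conversely realized by gluing the tiles. Assembling the local and global parts gives a well-defined map from admissible framings of $\widehat R$ to framings of $R$ which is surjective (every framing of $R$ is realized by a union of tiles) and, for generic $R$, injective; the genericity is needed precisely because for special $R$ (e.g.\ when some longitude or meridian of $\mathbb T^2$ carries vertices of $R$ in a degenerate configuration, or when an admissible framing can be represented by surfaces tangent to $\xi_+$ at one point and $\xi_-$ nearby in two genuinely different ways) the same admissible framing may correspond to several combinatorial framings—hence the "one-to-many" in general.

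The main obstacle I expect is the global rigidity statement: showing that admissibility really does kill all the would-be extra full twists, so that the relative twist along each edge-arc is completely determined by the endpoint pizza-slice data. Concretely one has to argue that a surface tangent to a fixed $\xi_\pm$ at both endpoints of an arc $\widehat v\subset\widehat R$, and lying in $\mathbb S^3$ with $\widehat v$ a piecewise-geodesic arc of the prescribed type, cannot wind an extra full turn without either developing a cusp on $\widehat v$ or failing the tangency condition somewhere—this is a statement about the monodromy of the plane fields $\xi_\pm$ restricted to such arcs, and it should follow from an explicit computation of how $\xi_\pm$ rotate along $\widehat v$ (using formulas \eqref{xi-}–\eqref{xi+} in the coordinates $(\theta,\varphi,\tau)$), but making it clean and identifying exactly the degenerate configurations that spoil injectivity is the delicate point. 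The rest—the local classification of pizza slices and the realizability by tiles via Lemma~\ref{tileboundary}—is comparatively routine.
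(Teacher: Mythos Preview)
Your overall strategy matches the paper's: read off pizza slices at the points of $\widehat R\cap(\mathbb S^1_{\tau=0}\cup\mathbb S^1_{\tau=1})$ to get the local data, then use the behavior of $\xi_\pm$ along each arc $\widehat v$ to see that this local data already determines the admissible framing. Two points, however, deserve correction.

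First, the ``main obstacle'' you flag is much easier than you suggest, and the paper dispatches it in one line. The key observation you are missing is that an admissible framing is, by definition, represented by a surface $F$ tangent to $\xi_+$ or $\xi_-$ at \emph{every} point of $\widehat R$, not merely at the endpoints of each arc $\widehat v$. Since $\xi_+(q)$ and $\xi_-(q)$ coincide only when $q\in\mathbb S^1_{\tau=0}\cup\mathbb S^1_{\tau=1}$, continuity of the tangent plane forces $F$ to be tangent to a single one of $\xi_+,\xi_-$ along the whole interior of each $\widehat v$. A direct computation from \eqref{xi-} and \eqref{xi+} shows that $\xi_+$ and $\xi_-$ rotate around $\widehat v$ by $-\pi/2$ and $+\pi/2$ respectively as $\tau$ goes from $0$ to $1$; which of the two occurs is then read off from the pair of pizza slices at the endpoints (Fig.~\ref{twist} in the paper). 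There is no room for an extra full twist and nothing delicate to check.

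Second, your account of the non-generic case is vague and does not identify the actual mechanism. The correspondence fails to be injective precisely when some point $p\in\widehat R\cap(\mathbb S^1_{\tau=0}\cup\mathbb S^1_{\tau=1})$ is \emph{not} a singularity of $\widehat R$; this happens exactly when the corresponding edge of $R$ has length $\pi$ (the two arcs of $\widehat R$ meeting at $p$ then form a smooth curve). At such a smooth point the equivalence class of the pizza slice $\nabla_p$ is not an invariant of the framing---the surface can be rotated freely around $\widehat R$ there---so two representatives of the same admissible framing may yield different orderings on that edge. This is the ``one-to-many'' direction, and it is a concrete condition on $R$, not a vague degeneracy.
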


\begin{proof}
Clearly, we can recover the equivalence classes of
the pizza slices $\nabla_p$ from the corresponding framing of $R$, since the only information carried
by the pizza slice $\nabla_p$ is from which side it approaches the link $\widehat R$.
Namely, let $p\in\mathbb S^1_{\tau=0}$ and let
$\{v_1,v_2\}$ be the corresponding horizontal edge.
Then~$\nabla_p$ is equivalent either to
$[v_1,v_2]\times[0,\varepsilon]/{\sim}$
or to $[v_2,v_1]\times[0,\varepsilon]/{\sim}$, and a choice of
one of this options is precisely the information recorded in the framing of $R$.
Similarly for vertical edges.

The point now is that, for an admissible framing $f$, the equivalence
classes of $\nabla_p$'s define $f$ completely.
Indeed, let $v$ be a vertex of $R$ and $p_0\in\mathbb S^1_{\tau=0}$, $p_1\in\mathbb S^1_{\tau=1}$
the endpoints of $\widehat v$.
At $p_i$, $i=0,1$, the tangent plane to $F$ is orthogonal to $\mathbb S^1_{\tau=i}$, and
the framing of $R$ associated with $f$
prescribes from which side the surface $F$ approaches $\widehat R$.

When $q$ traverses $\widehat v$ the planes $\xi_+(q)$
and $\xi_-(q)$ rotate around $\widehat v$ by $-\pi/2$ and $\pi/2$, respectively.
Which way the tangent plane to $F$ must rotate is determined by $\nabla_{p_0}$
and $\nabla_{p_1}$, see Fig.~\ref{twist}. So, the framing~$f$ is recovered uniquely.
\begin{figure}[ht]
\includegraphics[scale=0.85]{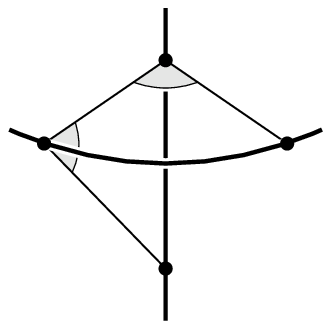}\put(-80,38){$p_0$}\put(-38,71){$p_1$}
\hskip0.3cm\raisebox{48pt}{$\longrightarrow$}\hskip0.3cm
\includegraphics[scale=0.85]{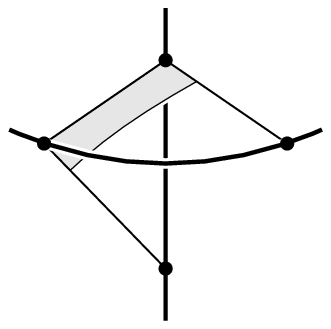}\hskip1cm
\includegraphics[scale=0.85]{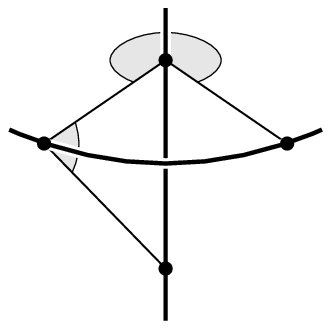}\put(-80,38){$p_0$}\put(-38,68){$p_1$}
\hskip0.3cm\raisebox{48pt}{$\longrightarrow$}\hskip0.3cm
\includegraphics[scale=0.85]{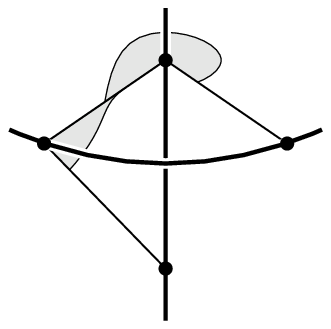}
\caption{Recovering an admissible framing from the pizza slices $\nabla_p$}\label{twist}
\end{figure}

The only possible reason for the correspondence between framings of $R$ and
admissible framings of $\widehat R$ to not be a bijection is a situation
when not all points in $\widehat R\cap(\mathbb S^1_{\tau=0}\cup\mathbb S^1_{\tau=1})$ are
singularities of $\widehat R$. This occurs when some edges of $R$
have `length' $\pi$, which generically does not happen.

If this does happen, then the tangent plane to a surface representing $f$
can rotate freely around $\widehat R$ at any point $p\in\widehat R\cap(\mathbb S^1_{\tau=0}\cup\mathbb S^1_{\tau=1})$
that is not a singularity of $\widehat R$, so when such a surface is required
to be tangent to $\xi_\pm$ this does not necessarily prescribe from
which side it should approach $\widehat R$ near $p$.
\end{proof}

In the sequel we will assume that all rectangular diagrams of links that we consider
are generic, and make no distinction between a framing
of a diagram and an admissible framing of the corresponding link.

An obvious but important thing to note here is the following

\begin{appr}
If $(R,f)$ is a framed rectangular diagram of a link and $D$ is
the picture of~$(R,f)$ obtained as described right
after Definition~\ref{framing-def}, then by smoothing $D$
near the vertices of $R$ we obtain the torus projection
of a link of the form ${\widehat R}^f$, in which
the indication of passing over and under at the crossings
corresponds to the relative position of the arcs
in the $\tau$-direction: the one with greater $\tau$
is overcrossing, see Fig.~\ref{smoothing}.
\begin{figure}[ht]
\includegraphics[width=100pt]{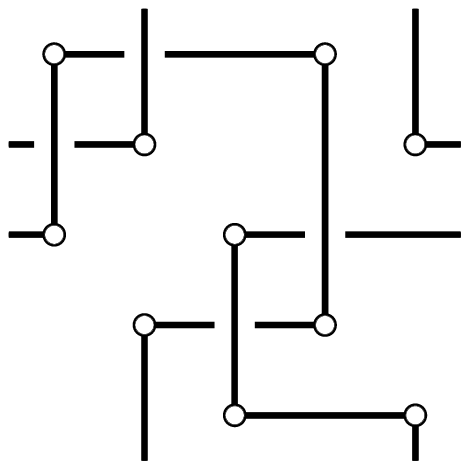}\hskip2cm
\includegraphics[width=100pt]{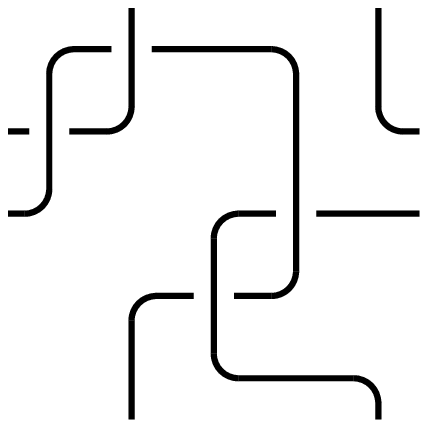}
\caption{A framed rectangular diagram $(R,f)$ and the torus projection of ${\widehat R}^f$}\label{smoothing}
\end{figure}

The principle also works in the other way. If $L\subset\mathbb S^3$ is
a link disjoint from the circles $\mathbb S^1_{\tau=0,1}$,
then a rectangular diagram of a link $R$ such that $\widehat R$
is isotopic to $L$ can be obtained by approximating the
torus projection of $L$ by the picture of a framed rectangular diagram
of a link.
\end{appr}

If $\Pi$ is a rectangular diagram of a surface, then $\widehat\Pi$
is tangent to $\xi_+$ or $\xi_-$ at every point of the boundary.
This together with Approximation Principle implies the following.

\begin{prop}\label{boundaryframing}
Let $\Pi$ be a rectangular diagram of a surface and $R=\partial\Pi$.
The boundary framing on $\widehat R$ induced by $\widehat\Pi$
is admissible.

The picture of the corresponding framing of $R$ is obtained
by connecting each pair of vertices of $R$ forming an edge by an arc
in $\mathbb T^2$ covered by the boundaries
of the rectangles from $\Pi$, see Fig.~\ref{boundary-framing}.
\end{prop}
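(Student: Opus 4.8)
The statement has two parts. The first part — that the boundary framing on $\widehat R$ induced by $\widehat\Pi$ is admissible — follows almost immediately from Lemma~\ref{tileboundary} and the definitions. Recall that a framing of $\widehat R$ is admissible if it is represented by a surface tangent to $\xi_+$ or $\xi_-$ at every point of $\widehat R$. The plan is to take a collar neighborhood $N$ of $\widehat R=\partial\widehat\Pi$ inside $\widehat\Pi$; by definition this collar represents the boundary framing. Every point $p\in\widehat R$ lies on a side $\widehat v$ of some tile $\widehat r$ with $v$ a free vertex of $r$, and near such a point $\widehat\Pi$ coincides with $\widehat r$. By Lemma~\ref{tileboundary}, $\widehat r$ — hence $\widehat\Pi$, hence the collar $N$ — is tangent to $\xi_-$ along $\widehat{(\theta_1,\varphi_1)}$ and $\widehat{(\theta_2,\varphi_2)}$ and to $\xi_+$ along $\widehat{(\theta_1,\varphi_2)}$ and $\widehat{(\theta_2,\varphi_1)}$. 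So at every point of $\widehat R$ the collar is tangent to one of $\xi_\pm$, which is exactly the admissibility condition.

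The second part identifies the picture of the associated framing of $R$. Here I would invoke the Approximation Principle together with the explicit recipe preceding Proposition~\ref{boundaryframing} for passing from an admissible framing of $\widehat R$ to a framing of $R$: for each point $p\in\widehat R\cap(\mathbb S^1_{\tau=0}\cup\mathbb S^1_{\tau=1})$, one looks at the pizza slice $\nabla_p=F\cap\mathbb B_\varepsilon(p)$, which is equivalent to $[v_1,v_2]\times[0,\varepsilon]/{\sim}$ or $[v_2,v_1]\times[0,\varepsilon]/{\sim}$, and records the ordering $v_2>v_1$ accordingly (similarly at $\tau=1$). The key computation is thus to determine, for $F=N$ the collar inside $\widehat\Pi$, which side of $\widehat v$ the surface $\widehat\Pi$ approaches near each endpoint of $\widehat v$. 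Take a free vertex $v=(\theta_0,\varphi_0)$ of $\Pi$ belonging to a rectangle $r=[\theta_1,\theta_2]\times[\varphi_1,\varphi_2]$ with, say, $\theta_0=\theta_1$, $\varphi_0=\varphi_1$. The side $\widehat v$ of $\widehat r$ ends at a point $p_0\in\mathbb S^1_{\tau=0}$ corresponding to the longitude $\varphi=\varphi_0$; near $p_0$ the tile $\widehat r$ occupies the region over the arc $[\theta_1,\theta_2]\times\{\varphi_1\}$ in the torus (since $h_r\to 0$ as $\varphi\to\varphi_1$, so $\widetilde h_r\to 0$ and the tile hugs the $\tau=0$ circle there). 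That arc is precisely one side of the rectangle $r$ — part of the boundary $\partial r$ covered by the rectangles of $\Pi$. So the pizza slice $\nabla_{p_0}$ is attached on the side $[\theta_1,\theta_2]\times\{\varphi_1\}$, giving the ordering on the edge through $v$ at the $\tau=0$ end in agreement with drawing the arc along $\partial r$. The analogous statement at $p_1\in\mathbb S^1_{\tau=1}$: near $p_1$, $\widetilde h_r\to 1$, so the tile hugs the $\tau=1$ circle over the arc $\{\theta_1\}\times[\varphi_1,\varphi_2]$, again a side of $\partial r$; this pins down the ordering on the vertical edge through $v$ at the $\tau=1$ end, again as drawn along $\partial r$. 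Since the edges of $R=\partial\Pi$ through a free vertex $v$ lie along the boundary $\partial r$ of the rectangle containing $v$, this shows the associated framing of $R$ is exactly the one pictured by connecting edge-forming pairs of vertices by arcs covered by $\bigcup_{r\in\Pi}\partial r$.

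The one point requiring genuine care — the main (mild) obstacle — is the bookkeeping of which side of $\widehat v$ corresponds to which ordering of $\{v_1,v_2\}$: one must match the convention in Lemma~\ref{tileboundary} and in the construction preceding the proposition (where the pizza slice being equivalent to $[v_1,v_2]\times[0,\varepsilon]/{\sim}$ dictates $v_2>v_1$) against the convention fixed right after Definition~\ref{framing-def} (where $v_2>v_1$ means the arc $[v_1,v_2]$ is drawn). Both conventions are orientation-sensitive on $\mathbb S^1$, and the earlier parts of the paper were set up precisely so that these match; so the verification reduces to tracing through the identifications~\eqref{join} once, using that near $p_0$ the relevant side of the tile lies over the horizontal side $[\theta_1,\theta_2]\times\{\varphi_1\}$ of $r$ and near $p_1$ over the vertical side $\{\theta_1\}\times[\varphi_1,\varphi_2]$. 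No further subtlety arises when $v$ is shared by no rectangle but the edges of $R$ at $v$ happen to be long (length $\pi$): by the standing genericity assumption adopted after Proposition~2 this case is excluded.
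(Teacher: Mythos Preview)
Your proposal is correct and follows essentially the same route as the paper: the paper's entire argument is the sentence immediately preceding the proposition, namely that $\widehat\Pi$ is tangent to $\xi_+$ or $\xi_-$ at every point of $\partial\widehat\Pi$ (which is Lemma~\ref{tileboundary}), together with an appeal to the Approximation Principle for the second part. Your write-up is considerably more detailed than the paper's---you explicitly trace the pizza-slice bookkeeping that the paper leaves implicit---but the underlying idea is identical.
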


\subsection{Thurston--Bennequin numbers}
Thurston--Bennequin number is a classical invariant of
Legendrian links~\cite{ben}. There are two Legendrian
links associated with every rectangular diagram $R$ (see \cite{ngth,DyPr} and
also Section~\ref{legendrian} below\footnote{The fact that arc-presentations `are Legendrian' was mentioned by W.\,Menasco
to the first present author already in 2003 and has been popularized since then}),
and their Thurston--Bennequin numbers can be computed
from $R$ without any reference to the associated Legendrian
links as we now describe.

Let $R$ be a(n oriented) rectangular diagram of a link. By $R^{^\nearrow}$
we denote a diagram obtained from $R$ by a small shift in the $(1,1)$-direction,
i.e.\ a diagram of the form
$$\{(\theta+\varepsilon,\varphi+\varepsilon)\;;\;(\theta,\varphi)\in R\},$$
where $\varepsilon>0$ is so small that, whenever $(\theta_0,\varphi_0)$
is a vertex of $R$, there are no vertices of $R$ in the following four regions:
$$\theta_0-\varepsilon\leqslant\theta<\theta_0,\quad
\theta_0<\theta\leqslant\theta_0+\varepsilon,\quad
\varphi_0-\varepsilon\leqslant\varphi<\varphi_0,\quad
\varphi_0<\varphi\leqslant\varphi_0+\varepsilon.$$
If $R$ is oriented, than $R^{^\nearrow}$ inherits the orientation
from~$R$.

Similarly we define $R^{^\nwarrow}$ by using a shift in the $(-1,1)$-direction.

\begin{defi}\label{tb(R)-def}
Let $R$ be an oriented diagram of a link. By \emph{the Thurston--Bennequin numbers of $R$}
we mean the following two linking numbers:
$$\tb_+(R)=\lk(\widehat R,\widehat{R^{^\nearrow}}),\quad
\tb_-(R)=-\lk(\widehat R,\widehat{R^{^\nwarrow}}).$$
\end{defi}

These numbers do not change if the orientation of $R$ is reversed.
So, if $\widehat R$ is a knot, then the numbers $\tb_\pm(R)$ do not depend on the orientation
of $R$, and we can speak about Thurston--Bennequin numbers of $R$ even if $R$
is not oriented.

\begin{prop}\label{tbproperties}
{\rm(i)} Let $R$ be an oriented rectangular diagram of a link and $F$ a surface with corners
in~$\mathbb S^3$ such that $\partial F\supset\widehat R$. If $F$ is tangent to the
plane field $\xi_+$ defined by~\eqref{xi+} {\rm(}respectively, $\xi_-$ defined by~\eqref{xi-}{\rm)}
at all points of $\widehat R$, then $\lk(\widehat R,{\widehat R}^F)$ is equal to $\tb_+(R)$ {\rm(}respectively,
to $-\tb_-(R)${\rm)}.

{\rm(ii)} The following equality holds
\begin{equation}\tb_+(R)+\tb_-(R)=-|R|/2,
\end{equation}
where $|R|$ denotes
the number of vertices in $R$.

{\rm(iii)} The set
$$\bigl\{\langle f\rangle\;;\;f\text{ is a framing of }R\bigr\}$$
coincides with $\bigl[\tb_+(R),-\tb_-(R)\bigr]\cap\mathbb Z$.
\end{prop}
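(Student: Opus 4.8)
The plan is to prove the three parts in order, using the same basic tool throughout: computing the linking number $\lk(\widehat R,\widehat{R'})$ between $\widehat R$ and a link $\widehat{R'}$ obtained from a shifted copy of $R$ (or from a framing surface) by counting signed crossings in a suitable projection.

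For part (i), I would use the Approximation Principle. A framing surface $F$ tangent to $\xi_+$ along $\widehat R$ corresponds, after approximating the torus projection, to the framed rectangular diagram $(R,f)$ whose picture is obtained by connecting each edge of $R$ by the arc drawn with vertical-over-horizontal convention coming from the $\tau$-direction of $F$. The requirement that $F$ be tangent to $\xi_+=\ker(\cos^2(\pi\tau/2)\,d\varphi+\sin^2(\pi\tau/2)\,d\theta)$ at the points of $\widehat R$ on the two circles $\mathbb S^1_{\tau=0,1}$ forces the pizza slices $\nabla_p$ to approach $\widehat R$ from the side that, in the planar picture, is exactly the side recorded by the $(1,1)$-shift: near a vertex $(\theta_0,\varphi_0)$ on $\mathbb S^1_{\tau=0}$ the slice attached to a horizontal edge leans in the $+\theta$-direction precisely when the $\xi_+$-condition holds, which matches $R^{\nearrow}$; similarly the behaviour near $\mathbb S^1_{\tau=1}$ and on vertical edges is pinned down. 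Hence the admissible framing defined by $F$ is the same as the one coming from the $(1,1)$-push-off, so $\widehat R^F$ is isotopic to $\widehat{R^{\nearrow}}$ and $\lk(\widehat R,\widehat R^F)=\tb_+(R)$; the $\xi_-$ case is identical with $R^{\nwarrow}$ in place of $R^{\nearrow}$, giving $-\tb_-(R)$ because of the sign in Definition~\ref{tb(R)-def}. The main thing to check carefully here is that the orthogonality of $\xi_\pm$ to the circles $\mathbb S^1_{\tau=0,1}$ combined with the sign of the $d\theta$-coefficient really does select the correct side; this is a local computation at each vertex.

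For part (ii), I would compute $\tb_+(R)+\tb_-(R)$ directly as a sum of signed crossings. Pushing $R$ off in the $(1,1)$-direction and in the $(-1,1)$-direction and counting crossings of $\widehat R$ with the pushed-off copy: the crossings all occur near vertices of $R$. Near each vertex $v=(\theta_0,\varphi_0)$ of $R$, the two edges of $R$ meeting at $v$ (one horizontal, one vertical) together with the corresponding edges of the shifted diagram produce a fixed local contribution. A clean way is to observe that for the $(1,1)$-shift and the $(-1,1)$-shift together, every vertex of $R$ contributes exactly $-1/2$ to $\tb_+(R)+\tb_-(R)$: the two push-off directions are "complementary", so that at each vertex the crossing that is positive for one direction is absent for the other, and summing over both directions one gets one crossing per vertex with a sign that works out to $-1$, giving $-|R|/2$ after accounting for the factor $1/2$ inherent in a self-linking-type count (each vertex is shared by exactly two edges). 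Alternatively one can derive (ii) from (i): take $F$ to be an embedded surface with corners with $\partial F\supset\widehat R$ realizing simultaneously controlled behaviour, use that a collar of $\widehat R$ in $F$ determines a framing, and note that the difference between the $\xi_+$-framing and the $\xi_-$-framing is exactly the number of vertices divided by two, because along each arc $\widehat v$ the planes $\xi_+$ and $\xi_-$ differ by a rotation of $-\pi/2$ versus $+\pi/2$, i.e. a total relative twist of $\pi$ per arc $\widehat v$, and there are $|R|$ such arcs while a full twist changes the linking number by $1$. Either route works; I would write the second, as it reuses (i) and the twist discussion already in the text.

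For part (iii), the inclusion $\subseteq$ is clear: any framing $f$ of $R$ is presented by a surface $F$ that is tangent to $\xi_+$ or $\xi_-$ at each point of $\widehat R$, and by part (i) combined with the twist description of framings, $\langle f\rangle=\lk(\widehat R,\widehat R^f)$ lies between $\tb_+(R)$ (the value when every pizza slice is chosen "$\nearrow$-style") and $-\tb_-(R)$ (when every slice is "$\nwarrow$-style"); changing the choice at one edge changes $\langle f\rangle$ by $\pm 1$ in unit steps, because flipping the side from which $F$ approaches $\widehat R$ along one edge inserts $\pm 1$ into the twist. This monotone unit-step behaviour also gives the reverse inclusion: starting from the all-$\nearrow$ framing with $\langle f\rangle=\tb_+(R)$ and flipping edges one at a time, $\langle f\rangle$ runs through every integer in $[\tb_+(R),-\tb_-(R)]$. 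The delicate point, and the one I expect to be the main obstacle, is to make precise exactly how much $\langle f\rangle$ changes when a single edge's pizza-slice class is flipped and to verify that the extreme framings are indeed the all-$\nearrow$ and all-$\nwarrow$ ones; this requires tracking the local contribution at the two endpoints of the affected edge and confirming it is $\pm 1$ and not $\pm 2$. Once that local analysis is done, parts (i)–(iii) fit together cleanly.
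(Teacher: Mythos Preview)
Your overall strategy matches the paper's, and parts~(ii) and~(iii) are essentially the paper's argument. Two points deserve comment.

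For part~(i) your route through pizza slices is both longer and slightly off. Tangency of $F$ to $\xi_+$ along $\widehat R$ does \emph{not} determine from which side $F$ approaches $\widehat R$ at the points of $\mathbb S^1_{\tau=0}\cup\mathbb S^1_{\tau=1}$: both sides have the same tangent plane there, so the sentence ``the slice \ldots\ leans in the $+\theta$-direction precisely when the $\xi_+$-condition holds'' is not correct. What saves the conclusion is that $\lk(\widehat R,\widehat R^F)$ depends only on the tangent-plane field along $\widehat R$, not on the side. The paper's argument avoids this detour entirely with a one-line observation: the shift $R\mapsto R^{\nearrow}$ moves $\widehat R$ in a direction transverse to $\xi_+$, so $\widehat{R^{\nearrow}}$ already represents the contact framing, and any $F$ tangent to $\xi_+$ gives the same linking number. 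You should replace your pizza-slice analysis by this.

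For part~(iii) you correctly isolate the delicate point, but your proposed resolution (``flip edges one at a time'') does not quite work as stated: flipping an arbitrary edge changes the tangent field ($\xi_+$ versus $\xi_-$) along the two arcs $\widehat v$ at its endpoints, and in general these two changes can go in opposite directions, giving net change~$0$ rather than $\pm1$. The paper handles this by using the orientation of $R$: fix all horizontal edges with the convention $v_+>v_-$, and flip only vertical edges. With the horizontal choices pinned down uniformly, flipping one vertical edge switches both of its adjacent arcs in the \emph{same} direction, so $\langle f_k\rangle=\tb_+(R)+k$ exactly, where $k$ is the number of vertical edges flipped. This is the missing ingredient you were looking for.
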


\begin{proof}
Claim~(i) follows from the fact that $\widehat{R^\nearrow}$ (respectively, $\widehat{R^\nwarrow}$)
is obtained from $\widehat R$ by a small shift in a direction transverse to $\xi_+$ (respectively,
to $\xi_-$).

When $p$ traverses an arc of the form $\widehat v$, where $v\in R$, the plane~$\xi_+(p)$
rotates relative to the plane~$\xi_-(p)$ by the angle $-\pi$. This implies Claim~(ii).

We also see, that $\langle f\rangle$ is maximized (respectively, minimized) over all admissible
framings of $\widehat R$ if it can be presented by a surface with corners $F$ such that
$\widehat R\subset\partial F$ and $F$ is tangent to $\xi_-$ (respectively, to $\xi_+$)
along $\widehat R$. Thus $\langle f\rangle\in\bigl[\tb_+(R),-\tb_-(R)\bigr]$ for any
admissible framing $f$.

Let $n=|R|/2$ and $k\in[0,n]\cap\mathbb Z$. Define a framing $f_k$
as follows. For each horizontal edge $\{v_-,v_+\}$ of~$R$
with $v_-$ the negative vertex and $v_+$ the positive one,
we put $v_+>v_-$. Do the same for $k$ arbitrarily chosen vertical edges.
For the remaining $(n-k)$ vertical edges put the opposite: $v_->v_+$.

Let $F$ be a surface representing the corresponding admissible framing of $\widehat R$.
There will be $2k$ arcs of the form $\widehat v$, $v\in R$, along
which $F$ is tangent to $\xi_-$ and $2(n-k)$ such arcs along which $F$ is tangent
to~$\xi_+$. This implies $\langle f_k\rangle=\tb_++k$.
So, any integer in the interval $\bigl[\tb_+(R),-\tb_-(R)\bigr]$ can be realized
by $\langle f\rangle$ for a framing $f$ of $R$, which completes
the proof of Claim~(iii).
\end{proof}

\subsection{Which isotopy classes of surfaces can be presented by rectangular diagrams?}

The following is a combinatorial definition of the relative Thurston--Bennequin number.
The latter appears, e.g.\ in~\cite{ho}.

\begin{defi}\label{reltb(R)-def}
Let $R$ be an oriented rectangular diagram of a link and $F\subset\mathbb S^3$
a surface (which can be more general than a surface with corners)
whose boundary contains $\widehat R$.
By \emph{the Thurston--Bennequin numbers of $R$ relative to $F$} we call
$$\tb_+(R;F)=\tb_+(R)-\lk(\widehat R,{\widehat R}^F),\quad
\tb_-(R;F)=\tb_-(R)+\lk(\widehat R,{\widehat R}^F).$$
\end{defi}

Again, if $R$ has a single connected component, then the numbers $\tb_\pm(R;F)$ are independent
of the orientation of $R$.

\begin{theo}\label{main1}
Let $R$ be a rectangular diagram of a link and let $F$ be a compact surface in $\mathbb S^3$ such
that each connected component of $\widehat R$ is either disjoint from $F$ or
contained in $\partial F$. Then a rectangular diagram
of a surface~$\Pi$ such that $\widehat\Pi$ is isotopic to $F$
relative to $\widehat R$ exists if and only any connected component
$K\subset R$ such that $\widehat{K}\subset\partial F$, has
non-positive Thurston--Bennequin numbers relative to $F$:
\begin{equation}\label{nonpositive}
\tb_+(K,F)\leqslant0,\quad\tb_-(K,F)\leqslant0.
\end{equation}
\end{theo}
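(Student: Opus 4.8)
The plan is to prove the two directions separately. For the \emph{necessity} of~\eqref{nonpositive}: suppose $\Pi$ is a rectangular diagram of a surface with $\widehat\Pi$ isotopic to $F$ relative to $\widehat R$, and let $K\subset R=\partial\Pi$ be a component with $\widehat K\subset\partial F$. Proposition~\ref{boundaryframing} tells us that the boundary framing $f$ induced by $\widehat\Pi$ on $\widehat K$ is admissible, so by Proposition~\ref{tbproperties}(iii) we have $\langle f\rangle\in[\tb_+(K),-\tb_-(K)]$. But $\langle f\rangle=\lk(\widehat K,\widehat K^{\widehat\Pi})=\lk(\widehat K,\widehat K^F)$, the latter because $\widehat\Pi\simeq F$ rel $\widehat R$. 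Rewriting the inequalities $\tb_+(K)\leqslant\lk(\widehat K,\widehat K^F)$ and $\lk(\widehat K,\widehat K^F)\leqslant-\tb_-(K)$ using Definition~\ref{reltb(R)-def} gives exactly $\tb_+(K;F)\leqslant0$ and $\tb_-(K;F)\leqslant0$.

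For the \emph{sufficiency} direction — which I expect to be the real work — I would argue as follows. First reduce to the case where $F$ is disjoint from the circles $\mathbb S^1_{\tau=0}$ and $\mathbb S^1_{\tau=1}$ except along its boundary: by a small isotopy (rel $\widehat R$ if necessary) one may push any part of $F$ off these two circles, since they have codimension $2$. Then take the torus projection of $F$; it is a (singular, but generically immersed) picture in $\mathbb T^2$. The idea is to approximate this picture, together with a carefully chosen cell decomposition, by a union of rectangles, so that the resulting $\Pi$ has $\widehat\Pi$ isotopic to $F$. The normal-surface / join-triangulation viewpoint mentioned in the introduction is the right framework: pick a triangulation of $\mathbb S^3$ of the form $T_1*T_2$, where $T_i$ is a fine triangulation (really just a division into arcs) of $\mathbb S^1_{\tau=i}$, put $F$ in normal position with respect to it, and read off the normal squares — each normal square in a tetrahedron $\Delta_r$ of this triangulation is isotopic rel boundary to a tile $\widehat r$. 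The compatibility conditions on rectangles and the ``at most two free vertices per meridian/longitude'' condition should correspond to $F$ being embedded and normal.

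The key constraint to handle is the boundary behaviour. Near each component $\widehat K\subset\partial F$ the surface $F$ must be made tangent to $\xi_+$ or $\xi_-$ at every point of $\widehat K$ (these being the only tangencies a tile realizes along its boundary, by Lemma~\ref{tileboundary}), i.e.\ $F$ must be modified near its boundary so that the induced framing becomes admissible. This is precisely where the hypothesis~\eqref{nonpositive} is used: by Proposition~\ref{tbproperties}(iii), an admissible framing with the prescribed linking number $\lk(\widehat K,\widehat K^F)$ exists on $\widehat K$ if and only if $\lk(\widehat K,\widehat K^F)\in[\tb_+(K),-\tb_-(K)]$, which by Definition~\ref{reltb(R)-def} is equivalent to~\eqref{nonpositive}. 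So one first isotopes $F$ (rel $\widehat R$) so that a collar of each boundary component agrees with a surface tangent to $\xi_\pm$ along $\widehat K$ realizing the correct framing — possible by the Approximation Principle and \eqref{nonpositive} — and only then normalizes the interior.

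\textbf{Main obstacle.} The hard part will be the sufficiency argument's bookkeeping: getting from ``$F$ put in normal position with respect to a join triangulation'' to an actual rectangular diagram $\Pi$ in the precise sense of the definition, i.e.\ verifying that the normal squares can be chosen so that their torus projections are honest rectangles satisfying pairwise compatibility and the free-vertex condition, and that the resulting $\widehat\Pi$ is isotopic to $F$ rel $\widehat R$ rather than merely normally isotopic in the triangulation. One must also ensure the boundary normalization (done first, using~\eqref{nonpositive}) is compatible with the interior normalization — that refining the triangulation to normalize the interior does not destroy the carefully arranged collar. I would expect to spend most of the proof on this gluing/compatibility analysis; the computation of $\tb_\pm$ and the linking-number manipulations are routine given Proposition~\ref{tbproperties} and Proposition~\ref{boundaryframing}.
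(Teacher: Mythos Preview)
Your necessity argument is correct and matches the paper's exactly.

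For sufficiency there is one genuine gap and one methodological difference.

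\textbf{The gap.} You only treat boundary components $\widehat K$ with $K\subset R$, but $\partial F$ may have components disjoint from $\widehat R$. These must first be isotoped (rel $\widehat R$) into the form $\widehat{R'}$ for some rectangular diagram $R'$, with $\tb_\pm(K')$ made small enough on each component $K'\subset R'$ (via stabilizations) that the induced boundary framing is admissible. The paper does this in a separate lemma before the main construction; only afterwards may one assume $\partial F\subset\widehat R$ and adjust the collar as you describe.

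\textbf{The method.} For the interior the paper does \emph{not} use normal surface theory relative to a fixed join triangulation. It uses the open book decomposition with binding $\mathbb S^1_{\tau=0}$ and pages $\mathscr P_\theta=\{\theta\}*\mathbb S^1$. After a generic perturbation, $\theta|_{F\setminus\mathbb S^1_{\tau=0}}$ has isolated critical points; finger moves (\`a la Ito--Kawamuro) eliminate closed curves from every $F\cap\mathscr P_\theta$; a further pagewise isotopy pulls the saddles to $\mathbb S^1_{\tau=1}$ and straightens all arcs. The rectangles of $\Pi$ are then read off directly: for each pair $\varphi_1,\varphi_2\in F\cap\mathbb S^1_{\tau=0}$ one records the maximal $\theta$-intervals over which $F\cap\mathscr P_\theta$ contains an arc covering $[\varphi_1,\varphi_2]$. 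Compatibility and the free-vertex condition follow from embeddedness of $F$ and the local structure near the singular pages.

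Your normal-surface picture is not misguided---the paper itself remarks on that connection---but executing it literally hits exactly the obstacles you anticipate: multiple parallel quads in one tetrahedron, normal triangles near the binding circles, and the mismatch between a fixed grid and the variable-size rectangles of $\Pi$. The open-book approach sidesteps all of this by letting $F$ dictate where the rectangle sides go (at the $\theta$-values of saddles and the $\varphi$-values of $F\cap\mathbb S^1_{\tau=0}$), rather than forcing $F$ onto a pre-chosen triangulation. What you flagged as ``bookkeeping'' is in fact where the substantive idea lives.
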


In this theorem $F$ is not necessarily assumed to be a surface with corners in the
sense of Definition~\ref{surf-with-corners}, and is allowed to have more
general singularities at the boundary, both initially and during the isotopy.
This allows to rotate it freely around $\partial F$, even near the singularities of $\partial F$.

\begin{proof}
First, we show that condition~\eqref{nonpositive} is necessary. Let $\Pi$ be a rectangular diagram of a surface
and~$K$ a connected component of $\partial\Pi$.
By Proposition~\ref{boundaryframing}
the boundary framing of~$K$ induced by~$\widehat\Pi$ is admissible.

Therefore, by Proposition~\ref{tbproperties} for any connected component $K\subset R$
we have
$$\lk\bigl(\widehat{K},{\widehat{K}}^F\bigr)\in\bigl[\tb_+(K),-\tb_-(K)\bigr],$$
which is equivalent to~\eqref{nonpositive}.

Now assume that~\eqref{nonpositive} holds for any component of $\widehat R\cap\partial F$.
We will use Lemma~\ref{more-components} and Proposition~\ref{isotopy-to-rectangular} proven below.

The link $\partial F$ can have connected components disjoint from $\widehat R$.
By Lemma~\ref{more-components} we can isotop $F$ keeping~$\widehat R$
fixed so that $\partial F\setminus\widehat R$ will have the form
$\widehat{R'}$ for some rectangular diagram of a link $R'$.
Moreover,
we can ensure that the restriction of the boundary framing induced by $F$ to any connected
component $\widehat K\subset\widehat{R'}$
is admissible by making $\tb_+(K)$ and $\tb_-(K)$ smaller than $\lk(\widehat K,{\widehat K}^F)$
and $-\lk(\widehat K,{\widehat K}^F)$, respectively.

Thus, we may assume without loss of generality that $\widehat R$ contains $\partial F$
(we achieve this by replacing $R$ with $R\cup R'$).
We can now isotop $F$ by altering it only in a small neighborhood of $\partial F$ so
that $F$ will become tangent either to $\xi_+$ or to $\xi_-$ at every point of $\partial F$.

Application of Proposition~\ref{isotopy-to-rectangular} with $\widehat{X_1}=\partial F$ and
$X_2=R\setminus X_1$ completes the proof.
\end{proof}

\begin{lemm}\label{more-components}
Let $R$ be a rectangular diagram of a link, $L$ a link disjoint from $\widehat R$, and $k$ an integer.
Then there exists a rectangular diagram of a link $R'$ such that
\begin{enumerate}
\item
the link $\widehat{R'}$ is isotopic to $L$ relative to $\widehat R$;
\item
for any connected component $K$ of $R'$ we have $\tb_+(K)<k$ and $\tb_-(K)<k$.
\end{enumerate}
\end{lemm}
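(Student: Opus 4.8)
The plan is to reduce the statement to the Approximation Principle. I would first recall that any link $L$ in $\mathbb{S}^3$ can be isotoped off the two distinguished circles $\mathbb{S}^1_{\tau=0}$ and $\mathbb{S}^1_{\tau=1}$, and indeed placed in a small tubular neighborhood of the torus $\mathbb{T}^2 = \{\tau = 1/2\}$, keeping $\widehat{R}$ fixed. After a small perturbation the torus projection of the resulting link is a generic immersed curve in $\mathbb{T}^2$ with only transverse double points. By the Approximation Principle, this projection can be approximated arbitrarily well by the picture of a framed rectangular diagram of a link $(R', f')$, where $\widehat{R'}$ is isotopic to $L$. Moreover, by performing the approximation near $\widehat{R}$ so as not to disturb it, we may keep $\widehat{R'}$ disjoint from $\widehat{R}$ and realize condition (1).

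Next I would address condition (2), the upper bounds on $\tb_\pm(K)$ for each component $K$ of $R'$. The key tool is stabilization of rectangular diagrams: a single stabilization (subdividing one edge, inserting a new vertex) adds a new pair of vertices to a component $K$ and, depending on the type of stabilization, changes $(\tb_+(K), \tb_-(K))$ by $(-1, 0)$ or $(0, -1)$, while leaving $\widehat{R'}$ unchanged up to isotopy relative to $\widehat{R}$. Since Proposition~\ref{tbproperties}(ii) gives $\tb_+(K) + \tb_-(K) = -|K|/2$, each stabilization decreases this sum by $1/2$; and by choosing the type of stabilization appropriately at each step we can make \emph{both} $\tb_+(K)$ and $\tb_-(K)$ as negative as we wish — in particular strictly below the prescribed integer $k$. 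We do this independently on every connected component of $R'$. The resulting diagram, which I still call $R'$, satisfies both (1) and (2).

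The main obstacle I expect is bookkeeping around two points. First, one must verify carefully that the stabilization move on a rectangular diagram of a link indeed changes $\tb_+$ and $\tb_-$ in the claimed way and does not alter the link type $\widehat{R'}$ — this is standard in the literature on grid diagrams (cf.\ \cite{ngth, DyPr}) but should be stated precisely here, including which of the four local stabilization types produces which of the two effects. Second, one must ensure that these moves are performed far from $\widehat{R}$ so that the ambient isotopy of $\widehat{R'}$ is genuinely relative to $\widehat{R}$; since stabilization is supported in an arbitrarily small neighborhood of one edge of $R'$, and $R'$ is already disjoint from $R$, this is not a real difficulty but must be noted. No genericity or Thurston--Bennequin computation beyond Proposition~\ref{tbproperties} is needed, so the argument is short once these two points are settled.
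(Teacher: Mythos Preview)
Your proposal is correct and follows essentially the same route as the paper: first approximate the torus projection of $L$ (after pushing it off $\mathbb S^1_{\tau=0}\cup\mathbb S^1_{\tau=1}$) by a rectangular diagram $R'$ disjoint from $R$, then apply stabilizations of the two types to drive $\tb_+$ and $\tb_-$ below $k$ componentwise. One small slip: a stabilization adds two vertices, so by Proposition~\ref{tbproperties}(ii) the sum $\tb_+(K)+\tb_-(K)$ drops by $1$, not $1/2$; this is consistent with your own claim that the effect on $(\tb_+,\tb_-)$ is $(-1,0)$ or $(0,-1)$ and does not affect the argument.
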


\begin{proof}
To show the existence of $R'$ without the restrictions on the Thurston--Bennequin numbers
we follow the Approximation Principle with a slight modification that now we
should avoid intersections with the given link $\widehat R$ already given in the `rectangular'
form.

First, we perturb $L$ slightly to make it disjoint from both circles $\mathbb S^1_{\tau=0,1}$.
We may also ensure that the torus projection $P$ of $L$ has only double transverse self-intersections.

The torus projection of the link $\widehat R$ is $R$.
The torus projection $P$ is disjoint from
this set, since $L$ is disjoint from $\widehat R$. Let $\varepsilon$
be the distance between $R$ and $P$.

We can now approximate $P$ by a framed rectangular diagram
of a link $R'$ in the $\varepsilon$-neighborhood of~$P$.
By a small perturbation if necessary we can achieve that the edges
of $R'$ do not appear on the same longitudes or meridians as the
edges of $R$.
The link $\widehat{R'}$ will be isotopic to $L$ relative to $\widehat R$, see Fig.~\ref{approximation}.
\begin{figure}[ht]
\centerline{\includegraphics[width=120pt]{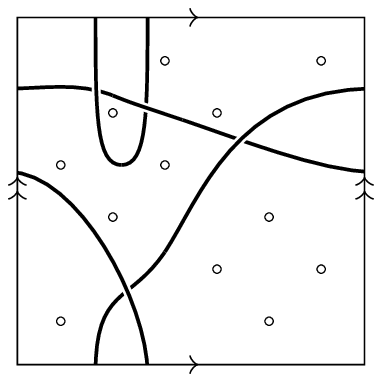}\quad\raisebox{55pt}{$\longrightarrow$}\quad
\includegraphics[width=120pt]{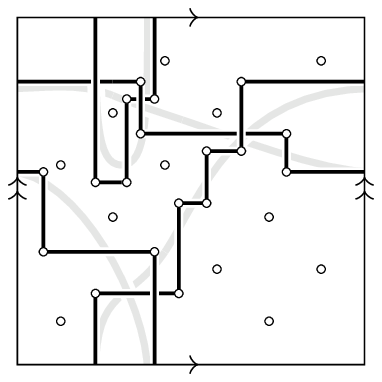}\quad\raisebox{55pt}{$\longrightarrow$}\quad
\includegraphics[width=120pt]{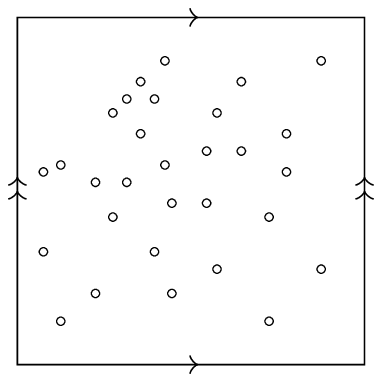}}
\caption{Approximating a torus projection of a link in the complement
of $\widehat R$
by a framed rectangular diagram of a link}\label{approximation}
\end{figure}
(The framing that $R'$ comes with plays no role in the sequel and should be discarded.)

Now if some of the Thurston--Bennequin numbers of connected components of $R'$
are too large, we apply stabilizations to $R'$. Recall (see~\cite{Dyn,DyPr})
that \emph{a stabilization} of a rectangular diagram is a replacement of a vertex $v_0$,
say, by three vertices $v_1,v_2,v_3$ such that
\begin{enumerate}
\item
$v_0,v_1,v_2,v_3$ are
vertices of a square $s$ of the form $[\theta_1,\theta_2]\times[\varphi_1,\varphi_2]$;
\item
the square $s$ is so small that
there are no other vertices of the diagram inside the regions $(\theta_1,\theta_2)\times\mathbb S^1$
and $\mathbb S^1\times(\varphi_1,\varphi_2)$ and there are exactly two
vertices at the boundary of each.
\end{enumerate}

We distinguish between \emph{type I} and \emph{type II} stabilizations as shown in Fig.~\ref{stabilizationtypes}.
\begin{figure}[ht]
\center{\includegraphics[scale=0.85]{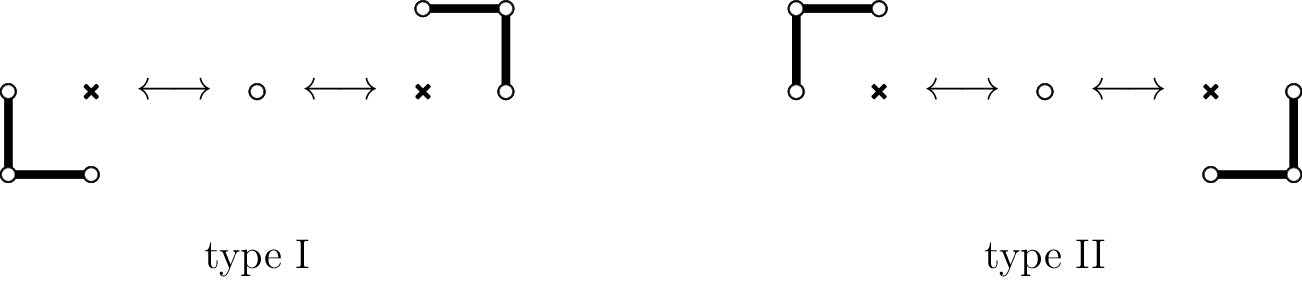}}
\caption{Types of stabilizations and destabilizations}\label{stabilizationtypes}
\end{figure}
Type~I stabilizations preserve $\tb_+$ and drop $\tb_-$ by $1$. Type~II stabilizations
preserve $\tb_-$ and drop $\tb_+$ by $1$. So, by applying sufficiently many
stabilizations of appropriate types we make the Thurston--Bennequin numbers of all
the components of $R'$ smaller than the given $k$.
\end{proof}

For any finite subset $X\subset\mathbb T^2$, we will use notation $\widehat X$
in the same sense as for rectangular diagrams:
$$\widehat X=\bigcup_{v\in X}\widehat v.$$
If $X$ is not a rectangular diagram of a link, then $\widehat X$ is not a link,
but $\widehat X$ is always a graph, i.e.\ a $1$-dimensional CW-complex.

The following statement will be given in greater generality
than necessary to establish Theorem~\ref{main1}. It will be used in full
generality later in the proof of Theorem~\ref{main2}.

\begin{prop}\label{isotopy-to-rectangular}
Let $F$ be a surface with corners,
and $X_1,X_2$ two disjoint finite subsets of~$\mathbb T^2$. Suppose that they satisfy
the following conditions:
\begin{enumerate}
\item
the graph $\widehat{X_1}$ contains $\partial F$;
\item
$F\cap\widehat{X_2}\subset\widehat{X_1}$;
\item
$F$ contains $\widehat{X_1}$;
\item $F$ is tangent either to $\xi_+$ or to $\xi_-$ at
all points of $\widehat{X_1}$.
\end{enumerate}
Then there exists a rectangular diagram of a surface $\Pi$
and an isotopy relative to $\widehat{X_1\cup X_2}$ taking $F$ to $\widehat\Pi$
such that the tangent plane to the surface remains constant
at any point from $\widehat{X_1}$ during the isotopy.
\end{prop}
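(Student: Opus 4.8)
The plan is to build the rectangular diagram $\Pi$ by first arranging, via an isotopy of $F$ rel $\widehat{X_1\cup X_2}$, that the torus projection of $F$ becomes a union of rectangles whose vertex set is exactly $X_1\cup X_2$, and then invoking the uniqueness statement at the end of Section~\ref{rds-sec} (the last Remark of that section) together with Definition~\ref{tile-def} to conclude that $F$ is isotopic to $\widehat\Pi$ for the resulting $\Pi$. More concretely, I would proceed as follows. First, since $F$ is a surface with corners and $\widehat{X_1}\supset\partial F$, every point of $\partial F$ lies on some arc $\widehat v$ with $v\in X_1$, and near each such arc hypothesis~(4) pins the tangent plane of $F$ to $\xi_+$ or $\xi_-$; by Lemma~\ref{tileboundary} this is exactly the boundary behaviour of a tile, so in a collar of $\partial F$ I can isotop $F$ (keeping the tangent plane fixed along $\widehat{X_1}$, as required) to agree with a union of tile-collars. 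The subtle point here is that a vertex $v\in X_1$ may be an endpoint of several arcs of $\partial F$ or lie in the interior of $F$ along $\widehat v$; the local model is a "pizza" around $\widehat v$, and I must check that the local data (which sides of $\widehat v$ the surface approaches, and with which of $\xi_\pm$) is realizable by a union of tiles sharing the side $\widehat v$ — this is where the combinatorial constraint defining a rectangular diagram of a surface (at most two free vertices per meridian/longitude) will have to be produced rather than assumed, so it is really a construction of $\Pi$, not a verification.

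Next, away from a neighborhood of $\widehat{X_1}\cup\widehat{X_2}$, I would push $F$ to be "vertical" with respect to the $\tau$-coordinate in the sense that its torus projection is an embedded (branched along the projection of $\widehat R$-type arcs) family of rectangles: concretely, decompose $F$ into pieces each of which projects to a rectangle $[\theta_1,\theta_2]\times[\varphi_1,\varphi_2]$, arranging the cutting curves on $F$ to project to horizontal and vertical arcs joining points of $X_1\cup X_2$. This is the heart of the argument and uses that $F$ is compact: triangulate (or handle-decompose) $F$, use a general-position argument to make the torus projection generic, and then a cut-and-paste / normal-surface-style straightening — this is precisely the sense in which the construction is an instance of Kneser–Haken normal surfaces for the join triangulation $\mathbb S^3=\mathbb S^1*\mathbb S^1$ mentioned in the introduction. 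Each normal piece, after an isotopy supported near it and fixing the already-straightened collar of $\widehat{X_1}$, is taken to the tile $\widehat r$ of the rectangle $r$ it projects to, using the explicit parametrization~\eqref{tileformula} to interpolate; the monotonic function $\zeta$ and the harmonic function $h_r$ guarantee that these tiles fit together smoothly along shared sides, by the gluing analysis carried out in the proof of the preceding Proposition.

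Finally, I would verify that the collection $\Pi$ of rectangles so obtained is genuinely a rectangular diagram of a surface: pairwise compatibility of the rectangles follows because the straightened pieces of $F$ are embedded and meet only along arcs $\widehat v$, which forces their projections to intersect only in the three allowed ways; and the "at most two free vertices per meridian and longitude" condition follows from condition~(1) — $\partial F\subset\widehat{X_1}$ means every free vertex of $\Pi$ lies in $X_1$ and corresponds to an arc of $\partial F$, and since $\partial F$ is an embedded $1$-manifold, each longitude $\mathbb S^1\times\{\varphi\}$ and each meridian $\{\theta\}\times\mathbb S^1$ meets $\partial\widehat\Pi=\widehat{\partial\Pi}$ in at most two points, exactly the defining inequality for a rectangular diagram of a link. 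Then $\partial\widehat\Pi=\widehat{\partial\Pi}\subset\widehat{X_1}$ by the preceding Proposition, and the isotopy is rel $\widehat{X_1\cup X_2}$ with tangent plane fixed along $\widehat{X_1}$ by construction. \emph{The main obstacle} I expect is the middle step: controlling the global cut-and-paste so that the cutting curves on $F$ can simultaneously be made to project to horizontal/vertical arcs through $X_1\cup X_2$ \emph{and} the resulting rectangles remain pairwise compatible — i.e.\ proving that a compact surface with corners admits, after isotopy rel the prescribed graphs, a "rectangular normal form", which is really a normal-surface existence theorem and will need a careful general-position plus innermost-disc / irreducibility-free argument (no incompressibility hypothesis is available, so the straightening must be done purely by local moves).
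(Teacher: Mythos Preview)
Your proposal takes a genuinely different route from the paper, and the step you yourself flag as the ``main obstacle'' is indeed a real gap that your outline does not close.

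You organize the argument around the \emph{torus projection}: cut $F$ into pieces whose projections to $\mathbb T^2$ are rectangles, then match each piece to a tile. The difficulty is that the torus projection is only defined on $F\setminus(\mathbb S^1_{\tau=0}\cup\mathbb S^1_{\tau=1})$, and a generic $F$ meets both circles, has folds in the projection, and need not decompose into rectangle-projecting pieces after any obvious local moves. Invoking ``Kneser--Haken normal surface theory for the join triangulation'' does not help as stated: Haken normalization for an arbitrary (possibly compressible) surface is an isotopy-plus-compression procedure, not an isotopy alone, and you explicitly have no incompressibility hypothesis. So the sentence ``triangulate $F$, make the torus projection generic, then cut-and-paste straighten'' is not a proof step but a restatement of the problem.

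The paper avoids this entirely by working not with the torus projection but with the \emph{open book decomposition} of $\mathbb S^3$ with binding $\mathbb S^1_{\tau=0}$ and pages $\mathscr P_\theta=\{\theta\}*\mathbb S^1$. The argument is Morse-theoretic in the page coordinate $\theta$: (1) perturb $F$ so that $\theta|_F$ has isolated critical points of standard types; (2) use \emph{finger moves} (the Ito--Kawamuro trick) to push portions of $F$ toward the binding so that every page intersection $F\cap\mathscr P_\theta$ becomes a disjoint union of arcs with endpoints on $\mathbb S^1_{\tau=0}$ (plus a single star graph at finitely many $\theta$); (3) drag all critical points of $\theta|_F$ to $\mathbb S^1_{\tau=1}$ and straighten the arcs pagewise; (4) each arc in $F\cap\mathscr P_\theta$ now ``covers'' an interval $[\varphi_1,\varphi_2]$, and the set of $\theta$ for which a given interval is covered is a finite union of open intervals $(\theta_i',\theta_i'')$ --- these give the rectangles of $\Pi$. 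Compatibility of the rectangles and the free-vertex condition are then read off from the local checkerboard pattern at the critical meridians; no global normal-surface machinery is needed.

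The key idea you are missing is Step~(2): eliminating closed curves and local extrema from the page intersections via finger moves. This is what replaces the ``innermost-disc / irreducibility-free argument'' you were anticipating, and it is where the actual work lies.
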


The proof of this statement occupies the rest of this section.

\begin{proof}
We will be using \emph{the open book decomposition} of $\mathbb S^3$
with \emph{the binding circle} $\mathbb S^1_{\tau=0}$ and \emph{pages}
$\mathscr P_\theta=\{\theta\}*\mathbb S^1\subset\mathbb S^1*\mathbb S^1$ , $\theta\in\mathbb S^1$.

The sought-for diagram $\Pi$ and an isotopy from $F$ to $\widehat\Pi$
will be constructed in four steps. At each step,
we apply an isotopy to~$F$, and the result will still be denoted by $F$.
At the end we will have $F=\widehat\Pi$. Every isotopy is silently
assumed to be fixed at
$\widehat{X_1\cup X_2}$  and to have identity differential at $\widehat{X_1}$
at every moment. One can see that all modification of $F$ described below
can be made so as to comply with this restriction. Roughly speaking,
the surface near
$\widehat{X_1}$ is already `good enough' and
doesn't need to be altered there.

\smallskip
\noindent\emph{Step 1.}
We perturb $F$ slightly to make it transverse to the binding circle,
and so that the following conditions hold:
\begin{enumerate}
\item
the restriction of the (multi-valued) function $\theta$
to $F\setminus\mathbb S^1_{\tau=0}$ has only finitely many
critical points, and each page $\mathscr P_\theta$
contains at most one of them;
\item each critical point of $\theta|_{F\setminus\mathbb S^1_{\tau=0}}$ is
a local maximum, a local minimum, or a multiple saddle (including those
at the boundary, see explanations below);
\item
strict local extrema of $\theta|_{F\setminus\mathbb S^1_{\tau=0}}$ do
not occur at $\mathbb S^1_{\tau=1}$
(non-strict local extrema can occur at $\partial F$, they are not critical points);
\item
any arc of the form $\widehat v$, where $v\in\mathbb T^2$,
containing a local extremum of
$\theta|_{F\setminus\mathbb S^1_{\tau=0}}$ has no other tangency point
with $F$;
\item
if $p_1$ and $p_2$ are two points of $F$ at which
$\theta|_{F\setminus\mathbb S^1_{\tau=0}}$ has a local extremum,
then the values of the coordinate $\varphi$ at $p_1$ and $p_2$ are different.
\end{enumerate}

All this is achieved by a generic arbitrarily small perturbation of $F$.

Some explanations are in order. Recall that
throughout the paper we deal with surfaces with corners, whose
smoothness class is only supposed to be $C^1$. So, when we
speak about (multiple) saddle critical points we mean
by that only the topological type of the singularity of the level line foliation.

Namely, let $\theta_0$ be the value of $\theta$ at such a singularity $p$, say.
Assume that $p$ is an interior point of $F$.
In a small neighborhood of $p$ the level line $\theta|_F=\theta_0$ must be
a star graph and the sign of $(\theta|_F(q)-\theta_0)$ must change whenever $q$
crosses any of its `legs'. The number of these legs is even and
equals $2(k+1)$, where $k$ is \emph{the multiplicity} of the saddle.
We put no restriction on the angle between the legs at $p$, and
allow $k$ to be zero.

By saying that $p$ is a boundary (multiple) saddle we mean that it becomes
a (multiple) saddle after a small extension of $F$ beyond $\partial F$,
which must exist by the definition of a surface with corners. Figs.~\ref{multiple-saddles}
and~\ref{multiple-saddles-boundary}
show a few examples of how the level line foliation may look like
near an interior or boundary multiple saddle.

\begin{figure}[ht]
$$\begin{array}{ccccc}
\includegraphics[width=80pt]{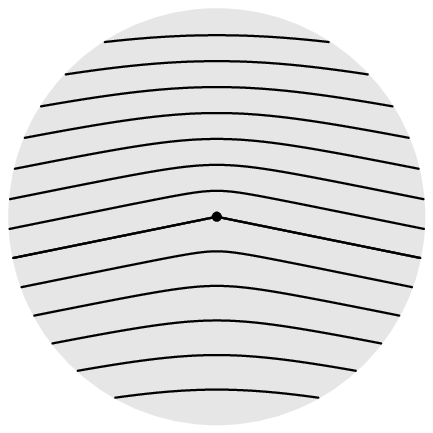}&\hskip1cm&
\includegraphics[width=80pt]{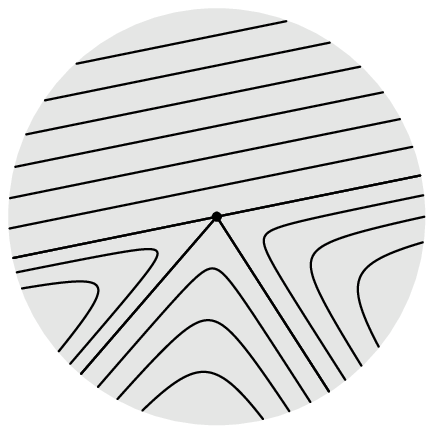}&\hskip1cm&
\includegraphics[width=80pt]{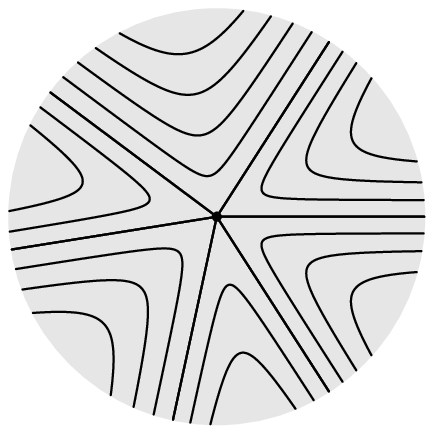}\\
k=0&&k=1&&k=2
\end{array}$$
\caption{Examples of multiple saddles}\label{multiple-saddles}
\end{figure}
\begin{figure}[ht]
$$\begin{array}{ccccc}
\includegraphics[width=80pt]{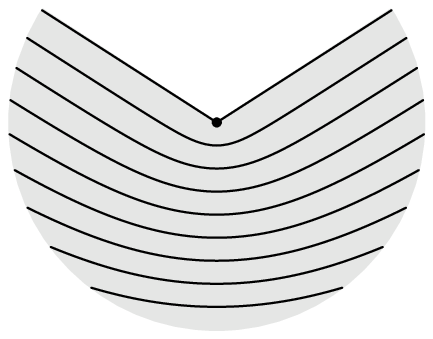}\put(-60,55){$\partial F$}&\hskip1cm&
\includegraphics[width=80pt]{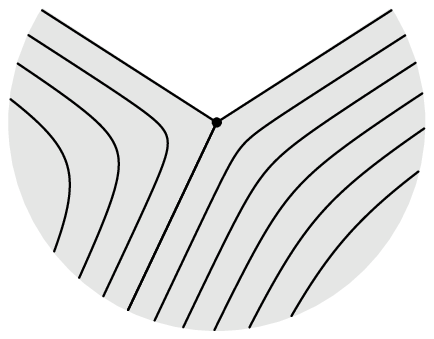}\put(-60,55){$\partial F$}&\hskip1cm&
\includegraphics[width=80pt]{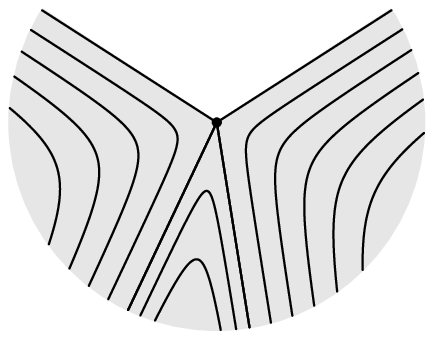}\put(-60,55){$\partial F$}
\end{array}$$
\caption{Examples of multiple saddles at the boundary}\label{multiple-saddles-boundary}
\end{figure}

The reader may wonder why we allow multiple saddles for
$\theta|_{F\setminus\mathbb S^1_{\tau=0}}$ with
other multiplicities than $1$. This is because $F$
and the tangent plane to $F$
are supposed to be fixed at $\widehat{X_1}$, which makes
some of such singularities occurring at $\mathbb S^1_{\tau=1}$ stable
under allowed isotopies.

\smallskip
\noindent\emph{Step 2.} Now we are going to get rid of all local extrema of
$\theta|_{\interior(F)\setminus\mathbb S^1_{\tau=0}}$
and to make all connected components of $\mathscr P_\theta\cap F$ simply connected for all $\theta\in\mathbb S^1$.
To this end, we use
the finger move trick introduced in a similar situation by T.\,Ito and K.\,Kawamuro in~\cite{IK}.

\begin{defi}
By \emph{a finger move disc} we call a pizza slice $\nabla=\nabla(\theta_1,\theta_2,\varphi_0,a)$ of the form
$[\theta_1,\theta_2]\times\{\varphi_0\}\times[0,a]/{\sim}$,
where $\theta_1,\theta_2,\varphi_0\in\mathbb S^1$, $a\in(0,1)$,
and the equivalence $\sim$ is given by~\eqref{join}, such that:
\begin{enumerate}
\item
$\nabla$ is transverse to $F$;
\item
each connected component of $\nabla\cap F$ is an arc intersecting
each radial arc $\{\theta\}\times\{\varphi_0\}\times[0,a]$, $\theta\in[\theta_1,\theta_2]$,
exactly once;
\item
$\nabla$ is disjoint from (multiple) saddle critical points of $\theta|_{F\setminus\mathbb S^1_{\tau=0}}$;
\item
$\nabla$ is disjoint from the graph $\widehat{X_1\cup X_2}$,
\end{enumerate}
see examples in the left pictures of Fig.~\ref{finger1} and~\ref{finger2}.
\end{defi}

\begin{lemm}\label{finger-discs}
There exists a finite collection of pairwise disjoint finger move discs $Y=\{\nabla_1,\ldots,\nabla_m\}$
such that, for any $\theta\in\mathbb S^1$, any simple closed curve in $F\cap\mathscr P_\theta$
intersects at least one of them.
\end{lemm}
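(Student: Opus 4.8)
The plan is to produce the finger move discs one page at a time and then argue that finitely many suffice. Fix $\theta\in\mathbb S^1$ and look at the page $\mathscr P_\theta$. The intersection $F\cap\mathscr P_\theta$ is a compact $1$-manifold (possibly with boundary on $\partial F$ or on the binding $\mathbb S^1_{\tau=0}$), and any simple closed curve $c$ in it bounds a subdisc on $\mathscr P_\theta\cong\mathbb D^2$. Among all such curves on this page, pick an innermost one $c_0$; it bounds a disc $D\subset\mathscr P_\theta$ whose interior is disjoint from $F$. Now I would choose a short radial arc $[\theta_1,\theta_2]\times\{\varphi_0\}\times[0,a]/{\sim}$ piercing $D$ transversally once and meeting $F$ in exactly one point, lying on $c_0$; thickening this in the $\theta$-direction to a genuine finger move disc $\nabla$ whose $F$-trace is a single arc near $c_0$ is possible because away from the saddle points of $\theta|_{F\setminus\mathbb S^1_{\tau=0}}$ the foliation of $F$ by pages is locally a product, and by Step~1 we may take $\nabla$ to avoid all saddles, both circles $\mathbb S^1_{\tau=0,1}$, and the graph $\widehat{X_1\cup X_2}$ (the latter being a finite union of fibers $\widehat v$, which are nowhere dense, so a generic choice of $\varphi_0$ works).

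The key point is that a single $\nabla$ works not just on the page $\mathscr P_\theta$ but on all nearby pages: as $\theta$ varies, $c_0$ moves continuously, and the condition ``$\nabla\cap F$ is a union of arcs each meeting every radial segment once'' is open. More precisely, I would note that the set of $\theta\in\mathbb S^1$ for which a fixed disc $\nabla$ is a legitimate finger move disc and meets every simple closed curve of $F\cap\mathscr P_\theta$ is open: it fails only when the $F$-trace on $\nabla$ develops a tangency with the radial foliation, i.e.\ when $\theta$ passes a saddle value, or when a new closed curve appears that $\nabla$ misses. After Step~1 there are only finitely many saddle values and each page has at most one critical point, so $\mathbb S^1$ is covered by finitely many open arcs, on each of which a single $\nabla$ handles every essential simple closed curve in the corresponding pages; throwing in finitely many more discs to cover the overlaps near the saddle values and perturbing so that the whole collection $Y=\{\nabla_1,\dots,\nabla_m\}$ is pairwise disjoint (again a generic choice of the $\varphi_0$'s), we obtain the desired family.

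The main obstacle I expect is not the existence of a good disc on a single page—that is routine planar topology—but the uniformity: making sure that finitely many discs suffice and that they can simultaneously be made pairwise disjoint while each remains transverse to $F$ with arc-type intersections. The care needed is in tracking what happens as $\theta$ crosses a saddle value, where the topology of $F\cap\mathscr P_\theta$ changes and a curve that was essential on one side may become inessential (or split) on the other; here one uses that the multiplicity of each saddle is finite and that a small neighborhood of the saddle value contributes only finitely many ``new'' curves, each of which can be caught by one additional finger move disc placed just past the critical level. The disjointness is then arranged by choosing the defining parameters $(\theta_1,\theta_2,\varphi_0,a)$ of the finitely many discs in general position, which is possible because each disc has codimension-$1$ ``bad'' loci in parameter space.
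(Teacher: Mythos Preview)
Your overall strategy---build finger move discs page by page, use openness and compactness of $\mathbb S^1$, then perturb for disjointness---is the paper's strategy too. But there is a real gap in your treatment of the critical points of $\theta|_{F\setminus\mathbb S^1_{\tau=0}}$: you repeatedly speak only of \emph{saddle} values, whereas after Step~1 the function also has local maxima and minima, and these are precisely where your ``generic $\varphi_0$'' argument breaks.

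Here is the issue. Suppose $p=(\theta_0,\varphi_0,a)$ is a local minimum of $\theta|_{F\setminus\mathbb S^1_{\tau=0}}$. For $\theta$ slightly above $\theta_0$ the page $\mathscr P_\theta$ contains a small simple closed curve encircling $p$, and as $\theta\to\theta_0^+$ this curve shrinks to the point $p$. A finger move disc $\nabla(\theta_1,\theta_2,\varphi_0',a')$ with $\varphi_0'\ne\varphi_0$ will miss these circles once $\theta$ is close enough to $\theta_0$, because their $\varphi$-extent collapses to $\{\varphi_0\}$. Hence to catch \emph{all} of them with a single disc one is forced to take $\varphi_0'=\varphi_0$ exactly (and $\theta_1=\theta_0$, with the disc reaching to $\tau=a$ so that the intersection with $F$ is a single arc meeting each radial segment once). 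This is what the paper calls an \emph{unmovable} disc; its $\varphi$-parameter is dictated by the critical point and cannot be chosen generically. Your compactness argument does not produce such discs: at $\theta=\theta_0$ there is no closed curve to detect, and at any nearby $\theta_1>\theta_0$ the disc you build works on an open interval that need not reach all the way back to $\theta_0$.

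This also undermines your disjointness step. Since the $\varphi_0$'s of the unmovable discs are pinned, you cannot simply ``choose the $\varphi_0$'s in general position.'' The paper handles this by arranging in Step~1 that distinct local extrema occur at distinct values of $\varphi$ (condition~(5) there), so the unmovable discs are automatically pairwise disjoint; only the remaining \emph{movable} discs are then perturbed. Your proposal needs to incorporate this dichotomy explicitly. (A smaller point: your insistence that the radial arc meet $F$ in a single point is unnecessary and generally unachievable---the definition allows $\nabla\cap F$ to have several arc components---but this is easily repaired.)
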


\begin{proof}
We start by constructing, for any $\theta_0\in\mathbb S^1$,
a family $Y_{\theta_0}$ of pairwise disjoint
finger move discs such that their union intersects any simple closed curve in $F\cap\mathscr P_\theta$
if $\theta$ is sufficiently close to $\theta_0$.

If $p=(\theta_0,\varphi_0,a)$ is a local minimum of the restriction $\theta|_{\interior(F)\setminus\mathbb S^1_{\tau=0}}$, then
for a small enough $\varepsilon>0$, the disc $\nabla(\theta_0,\theta_0+\varepsilon,\varphi_0,a)$
will be a finger move disc that intersects all the
circles in $F\cap\mathscr P_\theta$, $\theta\in(\theta_0,\theta_0+\varepsilon)$,
located in a small neighborhood of $p$. We include it into $Y_{\theta_0}$.

Similarly, if $p=(\theta_0,\varphi_0,a)$ is a local maximum of the restriction $\theta|_{F\setminus\mathbb S^1_{\tau=0}}$, then
we include into $Y_{\theta_0}$ the finger move disc $\nabla(\theta_0-\varepsilon,\theta_0,\varphi_0,a)$
with small enough $\varepsilon>0$.

The finger moves discs we have added to $Y_{\theta_0}$ so far are called \emph{unmovable} (because the
parameter $\varphi_0$ is prescribed by the position of the critical point). Now we add
to $Y_{\theta_0}$
some finite number of \emph{movable} finger move discs so that every simple closed curve in $F\cap\mathscr P_{\theta_0}$
is intersected by at least one of them. The union of all discs in $Y_{\theta_0}$ will also intersect
all simple closed curves in $F\cap\mathscr P_\theta$ for any $\theta$ sufficiently close to $\theta_0$.

Now by the compactness of $\mathbb S^1$ we can choose finitely many $\theta_1,\ldots,\theta_m\in\mathbb S^1$
so that the union of all finger move discs from $Y=Y_{\theta_1}\cup\ldots\cup Y_{\theta_m}$
intersect any simply closed curve contained in $F\cap \mathscr P_\theta$ for any~$\theta$.
However, some of these discs may intersect each other.

Due to the restrictions we put on $F$ at Step~1 the unmovable discs in $Y$
are already pairwise disjoint (they occur at different $\varphi$'s). We can alter the parameters of the movable discs
so as to make all discs from~$Y$ pairwise disjoint and still have the property
that every simple closed curve in $F\cap \mathscr P_\theta$ is intersected
by a disc from $Y$ for any $\theta\in\mathbb S^1$.
\end{proof}

Having chosen a collection $Y$ of finger move discs as in Lemma~\ref{finger-discs}
we now apply \emph{a finger move} to $F$ for each disc $\nabla\in Y$.
By this we mean an isotopy in a small neighborhood of $\nabla$
that pushes $F$ across~$\nabla$ toward $\mathbb S^1_{\tau=0}$ as shown in Fig.~\ref{finger1} and~\ref{finger2}.
\begin{figure}[ht]
\includegraphics[scale=0.3]{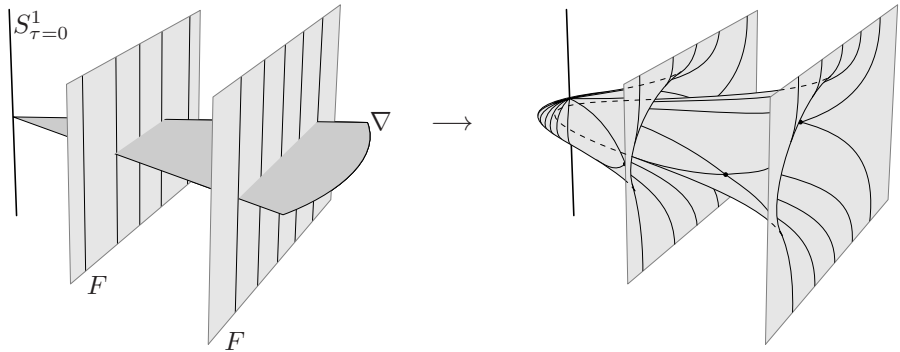}\put(-185,85){$\longrightarrow$}\put(-343,122){$S^1_{\tau=0}$}\put(-208,85){$\nabla$}\put(-315,23){$F$}\put(-263,2){$F$}
\caption{Finger move for a movable finger move disc}\label{finger1}
\end{figure}

\begin{figure}[ht]
\includegraphics[scale=0.3]{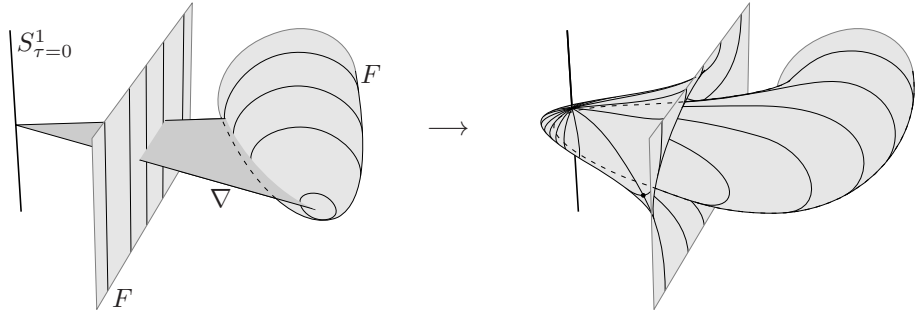}\put(-188,70){$\longrightarrow$}\put(-343,102){$S^1_{\tau=0}$}\put(-270,43){$\nabla$}\put(-307,5){$F$}\put(-213,90){$F$}
\caption{Finger move for an unmovable finger move disc}\label{finger2}
\end{figure}

The intersection of $F$ with pages $\mathscr P_\theta$ is altered as indicated in Fig.~\ref{sections}.
\begin{figure}[ht]
\includegraphics[scale=0.7]{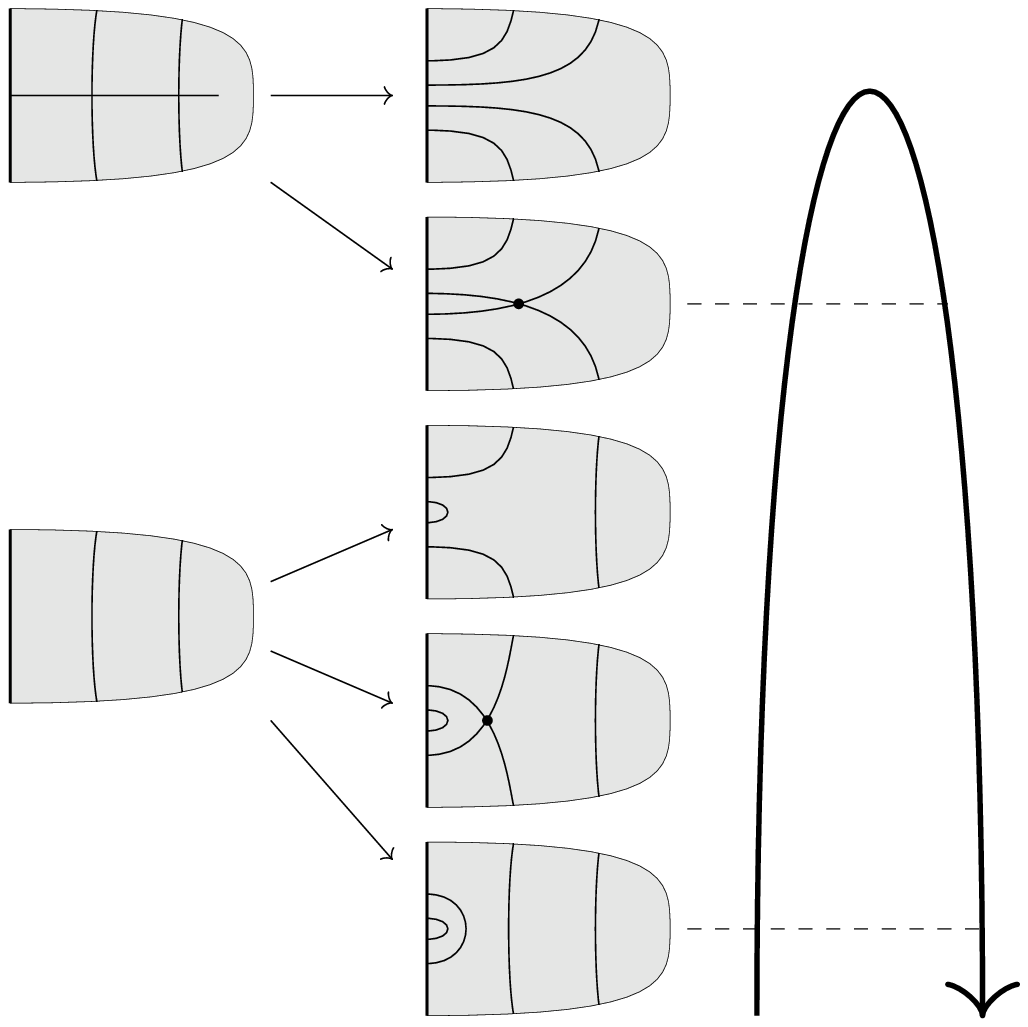}\put(-45,150){\small$\theta_1$}\put(-17,150){\small$\theta_2$}%
\put(-53,24){\small$\theta_1-\delta$}\put(-8,24){\small$\theta_2+\delta$}\put(-5,0){$\theta$}\put(-203,192){\small$\nabla$}
\hskip1cm\includegraphics[scale=0.7]{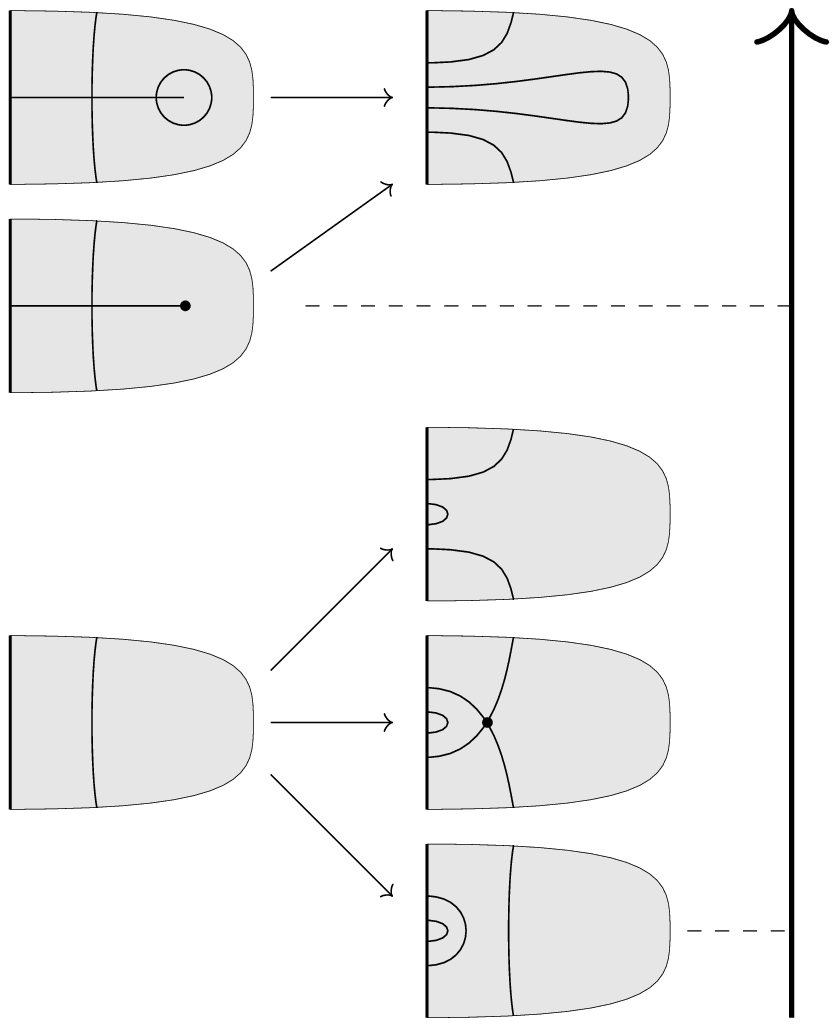}\put(-5,203){$\theta$}\put(-8,18){\small$\theta_1-\delta$}%
\put(-8,144){\small$\theta_1$}\put(-165,192){\small$\nabla$}
\caption{Changes in the sections $F\cap\mathscr P_\theta$ occurring as a result of a finger move.
Pictures on the left illustrate the movable disc case. The pictures on the right
correspond to the situation when $\theta|_{F\setminus\mathbb S^1_{\tau=0}}$ has a local minimum
in $\mathscr P_{\theta_1}$. The small arcs created as a result of the move are preserved in $\mathscr
P_\theta$ for
$\theta\in[\theta_2+\delta,\theta_1-\delta]$.}\label{sections}
\end{figure}
Since the discs in $Y$ are disjoint, the corresponding finger moves can
be done so that they do not
interfere with each other.

As a result of this step we have that the intersection of $F$ with every page
$\mathscr P_\theta$ is a union of pairwise disjoint arcs with endpoints on $\mathbb S^1_{\tau=0}$
and possibly a star graph having a $n$-valent vertex, $n>2$, in the interior of $\mathscr P_\theta$
and $1$-valent vertices at $\mathbb S^1_{\tau=0}$. There may be only finitely many pages
containing such a star graph. By a small deformation of $F$, if necessary,
we can ensure that the pages containing the star graphs do not contain
edges of the graph $\widehat{X_2}$.

\smallskip
\noindent\emph{Step 3.} At this and the next step we apply an isotopy to $F$
such that the combinatorics of the intersection of $F$ with each page~$\mathscr P_\theta$
remains unchanged. By such an isotopy
we can achieve the following:
\begin{enumerate}
\item
$F$ has only finitely many intersections with $\mathbb S^1_{\tau=1}$ and is orthogonal to
$\mathbb S^1_{\tau=1}$ at all those points;
\item
whenever $F\cap\mathscr P_\theta$ contains a star-like graph this graph
must have the form $\widehat Q$, where $Q$ is a finite subset
of the meridian $\{\theta\}\times\mathbb S^1$.
This means, in particular, that all critical points of $\theta|_{F\setminus\mathbb S^1_{\tau=0}}$
occur at $\mathbb S^1_{\tau=1}$;
\item
whenever a connected component $\beta$ of $F\cap\mathscr P_\theta$
contains the center $\mathscr P_\theta\cap\mathbb S^1_{\tau=1}$ of the page $\mathscr P_\theta$,
we have that $\beta$ is either a star graph or an arc of the form $\widehat{v_1}\cup\widehat{v_2}$
with $v_1,v_2\in\{\theta\}\times\mathbb S^1$;
\item
for any $\theta\in\mathbb S^1$, any connected component $\beta$ of $F\cap\mathscr P_\theta$
not containing the center of the page $\mathscr P_\theta$ intersects
any arc of the form $\widehat v$, $v\in\{\theta\}\times\mathbb S^1$, at most once;
\item
every connected component of $F$ intersects $\mathbb S^1_{\tau=1}$.
\end{enumerate}

Fig.~\ref{isotopy3} demonstrates the idea of such an isotopy. First, we pull all
\begin{figure}[ht]
\includegraphics[scale=0.8]{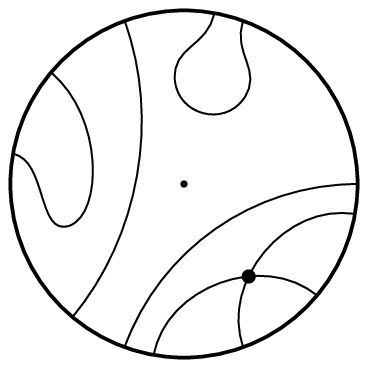}\hskip0.5cm\raisebox{47pt}{$\longrightarrow$}\hskip0.5cm
\includegraphics[scale=0.8]{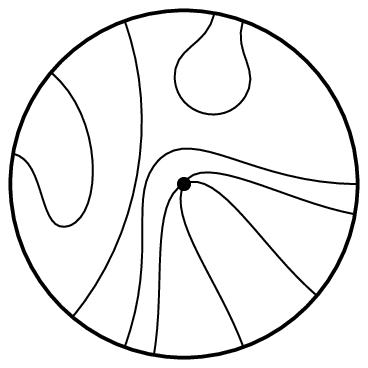}\hskip0.5cm\raisebox{47pt}{$\longrightarrow$}\hskip0.5cm
\includegraphics[scale=0.8]{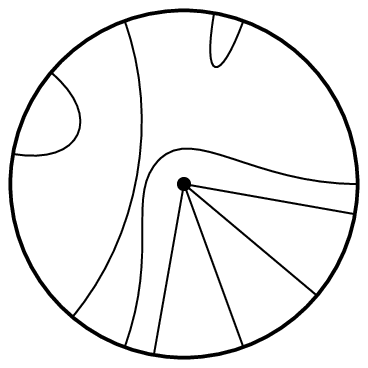}
\caption{Isotopy at Step~3}\label{isotopy3}
\end{figure}
critical points of $\theta|_{F\setminus\mathbb S^1_{\tau=0}}$ to
the circle $\mathbb S^1_{\tau=1}$. This may require to introduce
more intersection points of $F$ and $\mathbb S^1_{\tau=1}$.
If there is a spherical connected component of $F$ that does not
intersect $\mathbb S^1_{\tau=1}$
(non-spherical ones need not to be taken care of since they
must already intersect $\mathbb S^1_{\tau=1}$ by this moment)
we also pull it to this
circle to create an intersection. We keep fixed $F\cap\mathbb S^1_{\tau=1}$ from now on.

To obtain the desired isotopy we proceed as follows. First, we make $F$ orthogonal to $\mathbb S^1_{\tau=1}$ at all the intersections
and straighten the arcs joining the points from $F\cap\mathbb S^1_{\tau=1}$
with $\mathbb S^1_{\tau=0}$. This may include a small deformation in the $\theta$-direction, in particular, to
ensure that the arcs have a proper direction at $F\cap\mathbb S^1_{\tau=1}$. All the other deformations
at this step can be made `pagewise'.

Then we pull tight all the arcs in each intersection $F\cap\mathscr P_\theta$ that do not cross $S^1_{\tau=1}$
so as to ensure Condition~(4) above. The fact that there is no topological obstruction to do this in
all pages simultaniously follows from Smale's theorem on contractibility of the group of diffeomorphisms
of a $2$-disc fixed at boundary.

We are ready to produce the sought-for diagram $\Pi$.
For every connected component $\beta$ of $F\cap\mathscr P_\theta$
that has the form of an arc not passing through the center of the page $\mathscr P_\theta$,
there are $\varphi_1,\varphi_2\in\mathbb S^1$ such that $\beta$ intersects $\widehat v$, $v\in\theta\times\mathbb S^1$,
if and only if $v\in\{\theta\}\times[\varphi_1,\varphi_2]$. We will express this
by saying that $\beta$ \emph{covers} the interval $[\varphi_1,\varphi_2]$.

If a page $\mathscr P_\theta$ contains an arc covering $[\varphi_1,\varphi_2]$,
then so does the page $\mathscr P_{\theta'}$ for any $\theta'$ close enough
to~$\theta$. Moreover, if the page $\mathscr P_{\theta_1}$ contains
an arc covering the interval $[\varphi_1,\varphi_2]$ and
$\mathscr P_{\theta_2}$ does not do so, then there is
an intersection point $F\cap\mathbb S^1_{\tau=1}$
in each of the regions $\theta\in(\theta_1,\theta_2)$
and~$\theta\in(\theta_2,\theta_1)$.

Since there are only finitely many points in $F\cap\mathbb S^1_{\tau=1}$,
for any fixed $\varphi_1,\varphi_2\in\mathbb S^1\cap F$, $\varphi_1\ne\varphi_2$,
the set of all $\theta$ such that the page $\mathscr P_\theta$
contains an arc covering the interval $[\varphi_1,\varphi_2]$ is a union
of finitely many pairwise disjoint intervals $(\theta_1',\theta_1''),\ldots,(\theta_k',\theta_k'')$,
$k\geqslant0$. Indeed, it is an open subset of $\mathbb S^1$, but it cannot be the whole of $\mathbb S^1$
as this would mean that $F$ has a spherical component that does not intersect $\mathbb S^1_{\tau=1}$.

We then include in $\Pi$ the rectangles $[\theta_i',\theta_i'']\times[\varphi_1,\varphi_2]$, $i=1,\ldots,k$,
and do so for all possible choices of $\varphi_1,\varphi_2$ (there are only finitely many of them
since $\mathbb S^1_{\tau=0}\cap F$ is finite).

\begin{lemm}
The set  $\Pi$ of rectangles obtained in this way is a rectangular
digram of a surface.
\end{lemm}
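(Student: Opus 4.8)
The plan is to verify directly the two clauses of the definition of a rectangular diagram of a surface: that the rectangles of $\Pi$ are pairwise compatible, and that every meridian $\{\theta\}\times\mathbb S^1$ and longitude $\mathbb S^1\times\{\varphi\}$ of $\mathbb T^2$ carries at most two free vertices. The preparatory step is to read off the combinatorial meaning of the data produced at Step~3. After Steps~1--3 the set $F\cap\mathbb S^1_{\tau=0}$ is a finite subset of $\mathbb S^1$, and $F\cap\mathbb S^1_{\tau=1}$ is a finite set of multiple saddles of $\theta|_F$ lying on pairwise distinct pages by Step~1; I call their $\theta$-coordinates the \emph{saddle values}. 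For a generic $\theta$ the section $F\cap\mathscr P_\theta$ is a union of pairwise disjoint simple arcs, none through the centre of the page; because $F$ is transverse to the binding circle, no two of these arcs share a foot on $\mathbb S^1_{\tau=0}$, and, by the argument closing Step~3, each arc covers exactly the interval between its two feet on the side cut off from the centre. Hence the intervals covered by two distinct arcs of the same page are \emph{either} strictly nested \emph{or} have disjoint closures. Finally, a rectangle $[\theta',\theta'']\times[\varphi_1,\varphi_2]$ of $\Pi$ records a connected component $(\theta',\theta'')$ of the open set of $\theta$ for which $\mathscr P_\theta$ contains an arc covering $[\varphi_1,\varphi_2]$; its endpoints are saddle values, and at the corresponding saddle this arc merges into the star graph $F\cap\mathscr P_{\theta'}$ (resp.\ $F\cap\mathscr P_{\theta''}$), so $\varphi_1,\varphi_2$ are two consecutive legs of that star and $[\varphi_1,\varphi_2]$ is one of its gap intervals.

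For compatibility, let $r_1=[a_1,b_1]\times[c_1,d_1]$ and $r_2=[a_2,b_2]\times[c_2,d_2]$ be rectangles of $\Pi$. If the open intervals $(a_1,b_1)$ and $(a_2,b_2)$ meet, pick $\theta$ in the intersection; then $\mathscr P_\theta$ has arcs covering $[c_1,d_1]$ and $[c_2,d_2]$. If these coincide then $[c_1,d_1]=[c_2,d_2]$, the intervals $(a_1,b_1)$ and $(a_2,b_2)$ are the same component of the same open set, and $r_1=r_2$. Otherwise their covered intervals are strictly nested or have disjoint closures; in the latter case $r_1\cap r_2=\varnothing$. In the former case, say $[c_2,d_2]\subset(c_1,d_1)$, I claim $[a_1,b_1]\subset(a_2,b_2)$, which is exactly compatibility case~(3). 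Indeed, if the arc covering $[c_2,d_2]$ died at a saddle value $\theta^*\in(a_1,b_1)$, then $c_2$ and $d_2$ would be legs of the star $F\cap\mathscr P_{\theta^*}$; but $\theta^*$ is interior to $(a_1,b_1)$, so the arc $\beta$ covering $[c_1,d_1]\supset[c_2,d_2]$ survives, disjointly from that star, in $\mathscr P_{\theta^*}$, and the legs issuing from $c_2$ and $d_2$ and running to the centre would have to cross $\beta$, since $\beta$ separates the centre from the boundary arc $[c_1,d_1]$; this is impossible, as it would put those legs in the same component of $F\cap\mathscr P_{\theta^*}$ as $\beta$. Hence $(a_1,b_1)\subseteq(a_2,b_2)$, and strictly, since if, say, $b_1=b_2$ then $[c_1,d_1]$ and $[c_2,d_2]$ would both be gap intervals of the same dying family at that saddle value, hence have disjoint closures, contradicting $[c_2,d_2]\subset(c_1,d_1)$. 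If instead $(a_1,b_1)$ and $(a_2,b_2)$ are disjoint but their closures meet at a value $\theta_0$, then an endpoint of one equals an endpoint of the other, $\theta_0$ is a saddle value, and $[c_1,d_1],[c_2,d_2]$ are gap intervals of the star $F\cap\mathscr P_{\theta_0}$; two gap intervals of a cyclically ordered finite set of legs are disjoint or share a single leg and never interleave, so $[c_1,d_1]\cap[c_2,d_2]$ is empty or a single leg and $r_1\cap r_2$ is empty or a common vertex. In every case the rectangles are compatible.

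For the free-vertex condition, observe that $(\theta_0,\varphi_0)$ can be a vertex of a rectangle of $\Pi$ only if $\theta_0$ is a saddle value and $\varphi_0$ is a leg of the star $\widehat Q=F\cap\mathscr P_{\theta_0}$; so a meridian whose $\theta$ is not a saddle value, and a longitude whose $\varphi$ is not in $F\cap\mathbb S^1_{\tau=0}$, carry no vertices. Fix a saddle value $\theta_0$. Each leg $\varphi_0$ of $\widehat Q$ is the common endpoint of two consecutive gap intervals of $Q$; since the gap intervals carrying arcs for $\theta<\theta_0$ and those for $\theta>\theta_0$ form the two alternating consecutive-leg matchings of $Q$, one of these two gap intervals belongs to the first matching and the other to the second, so $(\theta_0,\varphi_0)$ is a vertex of exactly two rectangles of $\Pi$ — one dying, one born — \emph{unless} the leg $\widehat{(\theta_0,\varphi_0)}$ lies in $\partial F$; then one of the two adjacent gap intervals is the empty one, so $(\theta_0,\varphi_0)$ is a vertex of only the rectangle attached to the other and is therefore a free vertex. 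As $\partial F\subseteq\widehat{X_1}$ is a $1$-manifold, at the saddle point $\mathscr P_{\theta_0}\cap\mathbb S^1_{\tau=1}$ it has exactly two local ends, so exactly two legs of $\widehat Q$ lie in $\partial F$ when $\theta_0$ is a boundary saddle value and none otherwise; thus the meridian $\{\theta_0\}\times\mathbb S^1$ carries exactly two free vertices in the first case and none in the second. A parallel count at a point $\varphi_0\in F\cap\mathbb S^1_{\tau=0}$ — using that $F$, hence $\partial F$, is transverse to the binding circle and that Step~3(4) makes the sections near $\varphi_0$ pairwise non-interleaving — shows each longitude carries at most two free vertices, with two precisely when $\varphi_0\in\partial F$. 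This establishes both clauses.

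The main obstacle is the local normal form at a multiple saddle of $\theta|_F$ on $\mathbb S^1_{\tau=1}$ that underlies every step above: one must show that the arcs of $F\cap\mathscr P_\theta$ limiting to the star $\widehat Q$ as $\theta\to\theta_0-$ cover exactly the gap intervals of one of the two consecutive-leg matchings of the leg set $Q$, those limiting as $\theta\to\theta_0+$ cover exactly the other, and that at a boundary saddle the two boundary legs play the distinguished role used above. Once this is in place, the disc-topology input (disjoint simple arcs in a disc have non-interleaving feet), the leg-crossing argument for nested $\varphi$-intervals, and the vertex count are all routine; the only case needing extra care is a degenerate ($k=0$) saddle, where the two gap intervals of $Q$ form a complementary pair sharing both endpoints, which at worst produces a shared pair of vertices — still allowed by compatibility.
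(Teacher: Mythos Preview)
Your proof is correct and follows the same overall plan as the paper's: verify pairwise compatibility by analysing how the arcs in $F\cap\mathscr P_\theta$ behave as~$\theta$ crosses a value where $F$ meets the centre of the page, then check the free-vertex condition from the structure of~$\partial F$. The one substantive difference is the nesting step. The paper first records the full checkerboard pattern at each such $\theta_0$ (odd gap intervals resolve on one side, even on the other) and from this deduces that no vertex of a rectangle can lie in the interior of another, which then forces the opposite nesting of the $\theta$-intervals. You bypass that and argue directly in the page: if the arc covering the smaller $\varphi$-interval died at some~$\theta^*$ while the arc~$\beta$ covering the larger one persists, the star's legs to $c_2,d_2$ would have to cross~$\beta$, which is impossible since they lie in different components of $F\cap\mathscr P_{\theta^*}$. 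Both routes give the same conclusion; yours is more geometric and self-contained for this step, while the paper's checkerboard formulation is what you end up re-deriving anyway for the free-vertex count. For that count the paper is terser --- free vertices are exactly those~$v$ with $\widehat v\subset\partial F$, and $\partial F$ being a link immediately bounds them to two per meridian or longitude --- but your explicit matching argument says the same thing. One small imprecision: when you argue strictness of $(a_1,b_1)\subset(a_2,b_2)$, two gap intervals of the same star need not have disjoint closures (they may share a leg); what you actually need, and what holds, is that they cannot be strictly nested.
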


\begin{proof}
To see this, we first examine a neighborhood of a meridian $\{\theta_0\}\times\mathbb S^1$
such that $F$ passes through the center of the page $\mathscr P_{\theta_0}$. Such
meridians are the only ones that can contain vertical sides of the rectangles from $\Pi$.
Assume for the moment that the center $p$ of this page is an interior point of $F$.

The intersection $F\cap\mathscr P_{\theta_0}$ contains a singular component $\Gamma$
that has the form $\widehat{v_1}\cup\widehat{v_2}\cup\ldots\cup\widehat{v_{2k}}$,
where $k\in\mathbb N$, and $v_i=(\theta_0,\varphi_i)$, $i=1,\ldots,2k$,
are $2k$ pairwise distinct points of the meridian $\{\theta_0\}\times\mathbb S^1$.
We number them in the cyclic order they appear on the meridian
so that the intervals $(\varphi_1,\varphi_2),\ (\varphi_2,\varphi_3),\ldots,(\varphi_{2k},\varphi_1)$
are pairwise disjoint.

Any other component of $F\cap\mathscr P_{\theta_0}$ is an arc covering a subinterval
of $(\varphi_i,\varphi_{i+1})$, $i\in\{1,\ldots,2k\}$, where we put $\varphi_{2k+1}=\varphi_1$.
When $\theta$ deviates from $\theta=\theta_0$ the singular
component $\Gamma$ gets resolved into a family of arcs. When it deviates
one way (say, $\theta<\theta_0$) $\Gamma$ resolves to
arcs covering all the intervals $[\varphi_i,\varphi_{i+1}]$ with odd $i$'s,
and when $\theta$ deviates the other way $\Gamma$ resolves to
such arcs with even $i$'s.

Thus, the rectangles from $\Pi$ that have a vertical side on $\{\theta_0\}\times\mathbb S^1$
will be arranged near this meridian in a checkerboard manner, see the left picture
in Fig.~\ref{checkerboard}.
\begin{figure}[ht]
\includegraphics{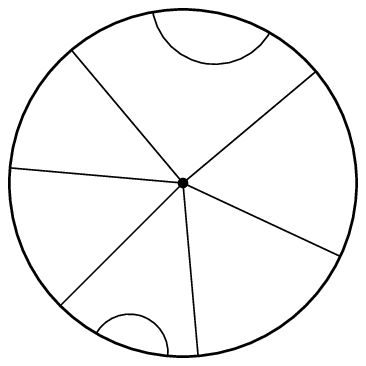}\put(-13,90){$\varphi_1$}\put(-98,100){$\varphi_2$}%
\put(-119,60){$\varphi_3$}\put(-103,13){$\varphi_4$}\put(-54,-4){$\varphi_5$}\put(-7,28){$\varphi_6$}%
\put(-43,20){$\mathscr P_{\theta_0}$}\hskip1cm
\includegraphics{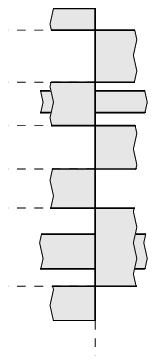}\put(-22,-3){$\theta_0$}\put(-55,23){$\varphi_1$}\put(-55,46){$\varphi_2$}\put(-55,57){$\varphi_3$}%
\put(-55,69){$\varphi_4$}\put(-55,82){$\varphi_5$}\put(-55,98){$\varphi_6$}
\hskip2cm\includegraphics{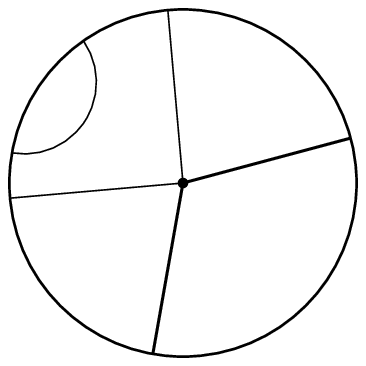}\put(-43,20){$\mathscr P_{\theta_0}$}\put(-4,66){$\varphi_1$}\put(-67,110){$\varphi_2$}%
\put(-119,49){$\varphi_3$}\put(-70,-3){$\varphi_4$}\put(-36,51){$\partial F$}\put(-58,25){$\partial F$}\hskip1cm
\includegraphics{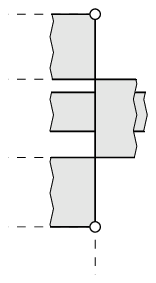}\put(-22,-3){$\theta_0$}\put(-55,17){$\varphi_1$}\put(-55,37){$\varphi_2$}\put(-55,59){$\varphi_3$}%
\put(-55,78){$\varphi_4$}
\caption{The rectangles from $\Pi$ near a `critical' meridian}\label{checkerboard}
\end{figure}
Note that all other rectangles intersecting the meridian $\{\theta_0\}\times\mathbb S^1$
will be disjoint from the longitudes $\mathbb S^1\times\{\varphi_i\}$, $i=1,\ldots,2k$.
In other words, vertices of rectangles cannot lie in the interior of other rectangles.

If $p$ lies at the boundary $\partial F$, we will have a similar picture with
the only difference that the vertical sides of the rectangles now cover
not the whole meridian $\{\theta_0\}\times\mathbb S^1$ but a proper subarc,
see the right picture in Fig.~\ref{checkerboard}.

One can now see that any two rectangles in $\Pi$ are compatible.
Indeed, let $r_1=[\theta_1',\theta_1'']\times[\varphi_1',\varphi_1'']$ and
$r_2=[\theta_2',\theta_2'']\times[\varphi_2',\varphi_2'']$ be two rectangles from $\Pi$.
Suppose that their interiors have a non-empty intersection.
This means that $(\theta_1',\theta_1'')\cap(\theta_2',\theta_2'')\ne\varnothing$ and
$(\varphi_1',\varphi_1'')\cap(\varphi_2',\varphi_2'')\ne\varnothing$.
For any $\theta\in(\theta_1',\theta_1'')\cap(\theta_2',\theta_2'')$ there are two disjoint arcs
in $F\cap\mathscr P_\theta$ one of which covers $[\varphi_1',\varphi_1'']$
and the other $[\varphi_2',\varphi_2'']$. This is possible only if these intervals are
disjoint or one is contained in the other. The former case does not occur as
the intervals have non-empty intersection.

Thus, without loss of generality, we may assume $[\varphi_1',\varphi_1'']\subset(\varphi_2',\varphi_2'')$.
The meridians $\{\theta_1'\}\times\mathbb S^1$ and $\{\theta_1''\}\times\mathbb S^1$ contain
the vertical sides of $r_1$. As the reasoning above shows,
there can be no arc covering $[\varphi_2',\varphi_2'']$ in
$F\cap\mathscr P_\theta$ if $\theta$ is close enough to $\theta_1'$ or $\theta_1''$,
since otherwise a vertex of $r_1$ would lie in the interior of $r_2$.
This means $\theta_1',\theta_1''\notin[\theta_2',\theta_2'']$,
which implies $[\theta_2',\theta_2'']\subset(\theta_1',\theta_2'')$.
Therefore, $r_1$ and $r_2$ are compatible.

It also follows from the above arguments that two rectangles from $\Pi$
cannot share a nontrivial subarc of their vertical sides. Neither they can
share a nontrivial subarc of horizontal sides as this would imply
a singularity of $F$ at $\mathbb S^1_{\tau=0}$ that is not allowed for
surfaces with corners.

Thus, if the interiors of two rectangles from $\Pi$ are disjoint,
then the intersection of these rectangles will be a subset of their vertices. So,
any two rectangles from $\Pi$ are compatible.

The free vertices of $\Pi$ are by construction such $v\in\mathbb T^2$
that $\widehat v\subset\partial F$. There can be at most two
on each meridian or longitude as the converse would imply that $\partial F$
is a graph having vertices of valence greater than two.
\end{proof}

\smallskip
\noindent\emph{Step 4.}
We now have that the intersection of surfaces $F$ and $\widehat\Pi$ with
every page have the same combinatorial structure. This means that, for
every arc not passing through the center of the page, in each of this intersections, there is an arc in the other
one covering the same interval. Whenever one of the intersections contains
a singular component (i.e.\ a star graph or an arc passing through the
center of the page) the other intersection contains exactly the same one.

An isotopy from $F$ to $\widehat\Pi$ is now obvious.
For every $v\in\mathbb T^2$ either $\widehat v$ is contained in both
$F$ and $\widehat\Pi$ or the sets $\widehat v\cap F$ and
$\widehat v\cap\widehat\Pi$ are finite and have the same number of points.
In the latter case we simply move the points from $F\cap\widehat v$ to the corresponding
points of $\widehat\Pi\cap\widehat v$ along $\widehat v$.

This concludes the proof of Proposition~\ref{isotopy-to-rectangular}.
\end{proof}

\section{Legendrian links and Legendrian graphs}\label{legendrian}

\subsection{Definitions. Equivalence of Legendrian graphs} First, recall some basic things
from contact topology.

The plane field $\xi_+$ defined by~\eqref{xi+} is one of many possible
presentations of \emph{the standard contact structure on $\mathbb S^3$}. For any $p\in\mathbb S^3$
the plane $\xi_+(p)$ is called \emph{the contact plane at $p$}. Diffeomorphisms
$\mathbb S^3\rightarrow\mathbb S^3$ preserving this plane field are called
\emph{contactomorphisms}.

The contact structure $\xi_+$ comes with \emph{a coorientation}, which is defined
by demanding that the $1$-form $\alpha_+=\cos^2(\pi\tau/2)\,d\varphi+\sin^2(\pi\tau/2)\,d\theta$,
which defines $\xi_+$,
evaluates positively on a positively oriented transverse vectors.
The standard (cooriented) contact structure is defined also by any positive multiple
$\alpha=f\alpha_+$ of $\alpha_+$, $f:\mathbb S^3\rightarrow(0,\infty)$, $f\in C^\infty$.
Any such form $\alpha$ will be referred to as \emph{a standard contact form}.
It has the property that $\alpha\wedge d\alpha$ is a positive multiple of the standard
volume element of $\mathbb S^3$ at every point.
By Darboux's theorem
all contact structures
are locally equivalent.

\begin{defi}
A cusp-free piecewise smooth curve $\gamma$ in $\mathbb S^3$ is called \emph{Legendrian}
if it is composed of $C^1$-smooth arcs tangent to the contact plane $\xi_+(p)$ at every point $p\in\gamma$.

\emph{A Legendrian link} is a link in $\mathbb S^3$ each component of which
is a Legendrian curve.

\emph{A Legendrian graph} is a compact $1$-dimensional CW-complex $\Gamma$
embedded in $\mathbb S^3$
such that every embedded curve in $\Gamma$ is Legendrian (in particular,
any such curve must be cusp-free).
\end{defi}

In this paper we understand graphs as topological subspaces in $\mathbb S^3$, so, we do not distinguish between two graphs
if one is obtained from the other by subdividing an edge by new vertices.

For an oriented Legendrian link $L$ one defines \emph{the Thurston--Bennequin number $\tb_+(L)$ of $L$}
as $\lk(L,L')$, where the link $L'$ is obtained from $L$ by a small shift in a direction transverse to $\xi_+$.
This is known to be a Legendrian link invariant. For a link of the form $L=\widehat R$, where
$R$ is a rectangular diagram of a link, $\tb_+(L)$ coincides with $\tb_+(R)$, see Definition~\ref{tb(R)-def}
and Proposition~\ref{tbproperties}.
If $L$ is a Legendrian sublink of $\partial F$, where $F$ is a surface,
one also defines \emph{the Thurston--Bennequin number of $L$ relative to $F$}:
$$\tb_+(L;F)=\tb_+(L)-
\lk(L,L^F),$$
which agrees with Definition~\ref{reltb(R)-def} given in the `rectangular' terms.

\begin{defi}\label{adjustment-def}
Let $\Gamma_1$, $\Gamma_2$ be two Legendrian graphs and $p\in\Gamma_1\cap\Gamma_2$ a point. We say that
$\Gamma_1$ and $\Gamma_2$ are obtained from each other by \emph{an angle adjustment at $p$} if
there is a local coordinate system $x,y,z$ near $p$
and an $\varepsilon>0$ such that
\begin{enumerate}
\item
$p$ is the origin of the system $x,y,z$;
\item
$dz+x\,dy-y\,dx$
is a standard contact form near $p$;
\item
$\Gamma_1$ and $\Gamma_2$ coincide outside of the neighborhood
$U$ of $p$ that is defined by $x^2+y^2<\varepsilon$, $|z|<\varepsilon$,
and in a small tubular neighborhood of $\partial U$;
\item
for each $i=1,2$, the closure of the intersection $\Gamma_i\cap U$
is a simple arc containing $p$ and having endpoints at the cylinder $C=\{(x,y,z)\;;\;x^2+y^2=\varepsilon$\},
or a star graph whose internal node is $p$
and whose one-valent vertices lie in $C$;
\item
for each $i=1,2$, the projection of $\Gamma_i\cap U$
to the $xy$-plane (which is the contact plane at $p$)
along the $z$-direction is injective;
\item
for each $i=1,2$ the closure of each connected component of $(\Gamma_i\cap U)\setminus\{p\}$
is a smooth arc.
\end{enumerate}

If $\Gamma_2'$ and $\Gamma_2''$ are two Legendrian graphs each of which is obtained
from $\Gamma_1$ by an angle adjustment at a vertex $p\in\Gamma_1$ of valence $>1$ and the edges of $\Gamma_2'$
emanating from $p$ have the same tangent rays as the respective edges of $\Gamma_2''$
we say that \emph{the angle adjustments $\Gamma_1\mapsto\Gamma_2'$ and $\Gamma_1\mapsto\Gamma_2''$
are equivalent}.
\end{defi}

\begin{prop}\label{adjustment}
{\rm(i)} Let $\Gamma$ be a Legendrian graph and $p\in\Gamma$ its vertex of valence $k>1$.
Let $r_1,\ldots,r_k$ be the tangent rays to the edges of $\Gamma$ at $p$.
Let $r_1',\ldots,r_k'$ be another family of pairwise distinct rays emanating from $p$ and lying in the contact
plane $\xi_+(p)$. Assume that $r_1',\ldots,r_k'$ follow in this plane in the same
circular order as $r_1,\ldots,r_k$ do.

Then for any neighborhood $U$ of $p$ there exists an angle adjustment $\Gamma\mapsto\Gamma'$
such that $\Gamma\setminus U=\Gamma'\setminus U$ and if an edge
of $\Gamma$ is tangent to $r_i$ at $p$, then the respective edge of $\Gamma'$
is tangent to $r_i'$.

{\rm(ii)} Let Legendrian graphs $\Gamma'$ and $\Gamma''$ be obtained from
a Legendrian graph $\Gamma$ by equivalent angle adjustments.
Then there exists a contactomorphism $\psi:\mathbb S^3\rightarrow\mathbb S^3$
such that $\psi(\Gamma')=\Gamma''$.
\end{prop}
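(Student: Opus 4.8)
The plan is to prove (i) first, constructing the angle adjustment explicitly in the Darboux model, and then to deduce (ii) by a standard Moser-type / contact-isotopy argument.

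For part (i), I would work in a Darboux chart near $p$ in which the contact form is $dz+x\,dy-y\,dx$, so the contact plane at the origin is the $xy$-plane. A Legendrian arc through the origin that projects injectively to the $xy$-plane is determined by its front projection; more directly, if the $xy$-projection of the arc is a curve $t\mapsto(x(t),y(t))$, the $z$-coordinate is forced by the Legendrian condition $\dot z=y\dot x-x\dot y$ up to the constant $z(0)=0$. So to change the tangent rays $r_i$ at $p$ into prescribed rays $r_i'$ (pairwise distinct, in the same circular order, lying in $\xi_+(p)=\{z=0\}$), it suffices to choose, for each edge, a smooth reparametrized planar curve that agrees with the old $xy$-projection outside a small disc, passes through the origin, has tangent direction $r_i'$ there, and still projects injectively. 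Because the $r_i$ and $r_i'$ occur in the same circular order, one can do this simultaneously for all $k$ edges while keeping the projections of distinct edges disjoint away from the origin — this is where the circular-order hypothesis is used. The lift back to the contact structure via $\dot z=y\dot x-x\dot y$ then automatically closes up (the integral $\oint$ along any returning loop vanishes because we only changed things inside a contractible region and the arcs are embedded), giving a Legendrian graph $\Gamma'$ agreeing with $\Gamma$ outside $U$. The only subtlety is keeping $z$-values small so condition (3) of Definition~\ref{adjustment-def} holds near $\partial U$; shrinking the disc on which we modify the projection handles this.

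For part (ii), suppose $\Gamma'$ and $\Gamma''$ arise from $\Gamma$ by equivalent angle adjustments at $p$, so near $p$ both are Legendrian stars/arcs whose edges have the same tangent rays, and they agree with $\Gamma$ (hence with each other) outside a neighbourhood of $p$. I would first build a \emph{smooth} isotopy $\phi_t$ of $\mathbb S^3$, supported in $U$, with $\phi_0=\mathrm{id}$ and $\phi_1(\Gamma')=\Gamma''$, taking each edge of the star to the corresponding edge and fixing the tangent ray at $p$ (possible since the two stars are ambient isotopic rel their common tangent data, both being embedded arcs with injective $xy$-projection). This $\phi_t$ need not be contact. To upgrade it, I would use the standard fact (Gray stability / the contact isotopy extension theorem) that an isotopy of a Legendrian submanifold through Legendrian submanifolds extends to an ambient contact isotopy: since $\Gamma_s:=\phi_s(\Gamma')$ is a path of Legendrian graphs from $\Gamma'$ to $\Gamma''$, there is an ambient contact isotopy $\psi_t$ with $\psi_0=\mathrm{id}$, $\psi_1(\Gamma')=\Gamma''$; set $\psi=\psi_1$. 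One must check the Legendrian-isotopy-extension theorem applies to graphs, not just knots — but it is local, and near the vertex all branches stay Legendrian with controlled tangent directions, so the usual proof (extend the Legendrian vector field, which lives in the contact distribution, to a global contact vector field) goes through edge by edge and then on a neighbourhood of $p$.

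The main obstacle I expect is the simultaneity in part (i): reparametrizing the $xy$-projections of all $k$ edges at once so that they remain pairwise disjoint off the origin, hit the prescribed rays, and stay injective, all while the $z$-lifts stay within the Darboux box. This is where the circular-order condition does real work, and it needs an honest (if elementary) construction rather than a one-line appeal. The contact-isotopy step in (ii) is conceptually routine but requires being slightly careful that the Legendrian isotopy extension machinery is invoked in a form valid for $1$-complexes with a vertex.
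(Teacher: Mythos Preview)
Your overall strategy matches the paper's, but there is a real gap in part~(i): the assertion that the $z$-lift ``automatically closes up'' is wrong. The $1$-form $x\,dy-y\,dx$ is not closed; its differential is $2\,dx\wedge dy$, so $\oint(x\,dy-y\,dx)$ over a loop equals twice the signed enclosed area, not zero. If you replace the projected arc $\gamma_i$ by a new arc $\gamma_i'$ with the same outer endpoint but a different tangent ray at the origin, the Legendrian lift of $\gamma_i'$ starting from $z=0$ at $p$ will arrive at the outer endpoint with the correct $z$-value only when
\[
\int_{\gamma_i'}(x\,dy-y\,dx)=\int_{\gamma_i}(x\,dy-y\,dx).
\]
This area-matching condition is a genuine constraint that must be imposed explicitly on each $\gamma_i'$; in practice it forces a rotation near $p$ to be compensated by an opposite rotation farther out. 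Shrinking the modification disc does not help: it makes the discrepancy small but never zero, and you need exact agreement for $\Gamma'$ to coincide with $\Gamma$ outside $U$.

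The same omission undermines part~(ii). You construct a smooth ambient isotopy $\phi_t$ that is explicitly not contact, and then assert that $\Gamma_s=\phi_s(\Gamma')$ is a path of Legendrian graphs; but a non-contact diffeomorphism need not carry Legendrians to Legendrians, so this step fails as written. The correct route, which the paper takes, is to interpolate directly between the two families of projected arcs by a path $\gamma_i'(s)$ that keeps the endpoints, the tangent rays at both ends, and the integrals $\int_{\gamma_i'(s)}(x\,dy-y\,dx)$ fixed in $s$; then the lifts form an honest Legendrian isotopy fixed outside a neighbourhood of $p$, to which the contact isotopy extension theorem (e.g.\ \cite[Theorem~2.6.2]{Ge}) applies. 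In short, the missing idea in both parts is the area condition on the $xy$-projections.
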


\begin{proof}
(i) Let $x,y,z$ be a local coordinate system as in Definition~\ref{adjustment-def}.
A Legendrian arc is uniquely recovered from its projection to the $xy$-plane
plus the knowledge of the $z$-coordinate of a single point, since
along the arc we have $dz=-x\,dy+y\,dx$.

Let $V\subset U$ be a small cylindrical neighborhood of $p$ such that
$\Gamma\cap V$ is projected to the $xy$-plane injectively and
the closure of the projection has the form $\gamma_1\cup\ldots\cup\gamma_k$,
where $\gamma_i$ is a smooth arc connecting~$p$ with a point at $\partial V\cap\{z=0\}$ and tangent
to $r_i$ at $p$, $i=1,\ldots,k$.

We can obviously find another family of arcs $\gamma_1',\ldots,\gamma_k'$ that
also form a star graph in $V\cap\{z=0\}$ such that for any $i=1,\ldots,k$ the following hold:
\begin{enumerate}
\item
$\partial\gamma_i=\partial\gamma_i'$;
\item
$\gamma_i'$ is tangent to $r_i'$ at $p$ and to $\gamma_i$ at the other endpoint;
\item
$\int_{\gamma_i}(x\,dy-y\,dx)=\int_{\gamma_i'}(x\,dy-y\,dx)$ if $\gamma_i$ and $\gamma_i'$ are oriented
coherently.
\end{enumerate}
Now $\Gamma'$ is uniquely defined by demanding that $\Gamma'\setminus U=\Gamma\setminus U$ and
that the projection to the $xy$-plane of~$\Gamma'\cap V$ is $\gamma_1'\cup\ldots\cup\gamma_k'$.

(ii) Let $\gamma_1',\ldots,\gamma_k'$ be as above and $\gamma_1'',\ldots,\gamma_k''$
be another family of arcs with the same properties. One can see that there is an isotopy from
one family to the other such that the tangent rays at the endpoints to each arc as well as
the integral of the form $x\,dy-y\,dx$ along each arc stay fixed, and the interiors of arcs
remain disjoint. This means that
there is a smooth isotopy from $\Gamma'$ to $\Gamma''$ (the latter obtained
similarly to $\Gamma'$ by using arcs $\gamma_i''$ in place of $\gamma_i'$)
in the class of Legendrian graphs such that the tangent rays to the edges
of the graph at $p$ and the intersection of the graph with the exterior of $V$
remain fixed during the isotopy. By smoothness of this isotopy we mean
that it changes only a part of a graph consisting of smooth arcs, and its
restriction to these arcs is smooth.

It is then classical to see that such an isotopy can be realized by an ambient
isotopy in the class of contactomorphisms, see~\cite[Theorem 2.6.2]{Ge}.
\end{proof}

\begin{rema}
The integral of the form $x\,dy-y\,dx$ over a closed curve
in the $xy$-plane is twice the oriented area bounded by
the curve. So, if we want to make an angle adjustment
that rotates a portion of an edge around an endpoint $p$, then some farther
portion must rotate in the opposite direction, see Fig.~\ref{adjustment-pic}.
\begin{figure}[ht]
\includegraphics[width=80pt]{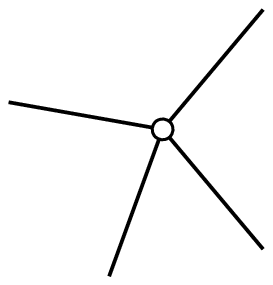}\hskip1cm\raisebox{38pt}{$\longrightarrow$}\hskip1cm\includegraphics[width=80pt]{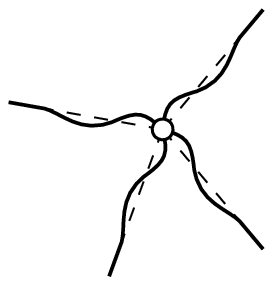}
\caption{Angle adjustment}\label{adjustment-pic}
\end{figure}
\end{rema}

\begin{prop}\label{equivalent-equivalence}
Let $\Gamma_0$ and $\Gamma_1$ be two Legendrian graphs.
The following
two  conditions are equivalent:
\begin{enumerate}
\item
$\Gamma_1$ can be obtained from $\Gamma_0$ by an angle adjustment
followed by a contactomorphism of $\mathbb S^3$;
\item\label{legisotopy3}
there exists an isotopy $\eta:[0,1]\times\Gamma_0\rightarrow\mathbb S^3$ and
a finite collection of regularly parametrized closed arcs $\gamma_1,\ldots,\gamma_k\subset\Gamma_0$
such that
\begin{enumerate}
\item
the arcs $\gamma_i$, $i=1,\ldots,k$, cover the whole of $\Gamma_0$;
\item
$\eta_t(\Gamma_0)$ is a Legendrian graph for any $t\in[0,1]$;
\item
$\eta_0=\mathrm{id}_{\Gamma_0}$ and $\eta_1(\Gamma_0)=\Gamma_1$;
\item
$\{\eta_t(\gamma_i)\;;\;t\in[0,1]\}$ is a $C^1$-continuous
family of regularly parametrized arcs
for each $i=1,\ldots,k$.
\end{enumerate}
\end{enumerate}
\end{prop}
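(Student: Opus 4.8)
The plan is to prove the equivalence (1) $\Leftrightarrow$ (2) in two directions, treating the angle-adjustment data and the $C^1$-isotopy data as two sides of the same coin. For the direction (1) $\Rightarrow$ (2), suppose $\Gamma_1$ is obtained from $\Gamma_0$ by an angle adjustment $\Gamma_0\mapsto\Gamma'$ at a point $p$, followed by a contactomorphism $\psi$ with $\psi(\Gamma')=\Gamma_1$. By the proof of Proposition~\ref{adjustment}(i) I would first produce a smooth Legendrian isotopy from $\Gamma_0$ to $\Gamma'$ that is supported in the cylindrical neighborhood $V$ of $p$; the key point is that outside a small part of the edges (the arcs $\gamma_i$ near $p$, together with the fixed exterior $\Gamma_0\setminus U$) nothing moves, so $\Gamma_0$ is covered by finitely many regularly parametrized closed arcs on each of which the isotopy is $C^1$. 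Then, since any contactomorphism of $\mathbb S^3$ is isotopic to the identity through contactomorphisms (the contactomorphism group of $\mathbb S^3$ is connected — one reduces via Gray stability and the contractibility of $\mathrm{Diff}^+(\mathbb S^3)$, or invokes the standard fact cited in~\cite{Ge}), the isotopy $\psi_t$ applied to $\Gamma'$ keeps it Legendrian at every moment and is smooth on each edge. Concatenating the two isotopies and refining the arc cover if needed yields the data required in (2).

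For the converse (2) $\Rightarrow$ (1), I would argue that a $C^1$-continuous Legendrian isotopy of a graph, smooth on each of the covering arcs $\gamma_i$, can be realized by an ambient contact isotopy \emph{up to} a single angle adjustment at the end. The idea is: by the isotopy extension theorem in the contact category (Theorem~2.6.2 of~\cite{Ge}, applied edge-by-edge), as long as the isotopy moves a finite union of smooth embedded Legendrian arcs $C^1$-smoothly and the combinatorics at the vertices is fixed, it extends to an ambient isotopy through contactomorphisms — \emph{provided} the tangent rays at the vertices are also carried along by the ambient flow. The only obstruction to doing this globally and rigidly is that the graph's tangent configuration at a vertex $p$ of valence $>1$ may have rotated relative to what a genuine contactomorphism would produce if we insist on fixing, say, a far part of the edges; but this discrepancy is exactly the freedom of an angle adjustment at $p$ (see Proposition~\ref{adjustment} and the remark on the area integral). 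Concretely, I would split $[0,1]$ into finitely many subintervals so that on each, the isotopy is a small perturbation supported near a single vertex or in the interior of a single edge; interior-of-edge perturbations extend to ambient contactomorphisms directly, and each near-vertex perturbation is, up to an ambient contactomorphism, an angle adjustment of the type in Definition~\ref{adjustment-def}. Composing all these and using Proposition~\ref{adjustment}(ii) to absorb the sequence of angle adjustments into a single one (any two angle adjustments producing the same tangent rays at $p$ differ by a contactomorphism) gives condition~(1).

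The main obstacle I expect is the careful bookkeeping at the vertices in the direction (2) $\Rightarrow$ (1): one must verify that the $C^1$-continuity of the arc families $\eta_t(\gamma_i)$, together with the hypothesis that the $\gamma_i$ \emph{cover} $\Gamma_0$, forces the motion at each vertex to be an \emph{equivalence class} of angle adjustments and nothing wilder (e.g. no spiralling, no extra winding of a $2(k+1)$-valent singular point). Here the hypotheses that each $\gamma_i$ is \emph{regularly} parametrized and that the family $t\mapsto\eta_t(\gamma_i)$ is $C^1$ are essential — they control the tangent line at the endpoint of each $\gamma_i$ throughout the isotopy, so the limiting configuration of tangent rays at each vertex varies continuously and the total change is captured by finitely many angle adjustments. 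I would also need a small technical lemma that near a vertex the graph can always be brought, by a contactomorphism, into the normal form of Definition~\ref{adjustment-def} (Darboux plus the injective-projection condition), which follows from Darboux's theorem and a generic choice of the $z$-axis; this lets me invoke Proposition~\ref{adjustment} as a black box. The rest — concatenation of isotopies, refinement of arc covers, connectedness of the contactomorphism group — is routine.
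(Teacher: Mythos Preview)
The paper omits this proof entirely (``We skip the elementary but boring proof''), so there is no approach to compare against; your outline is therefore being judged on its own merits.

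Your plan is essentially sound in both directions, and the ingredients you single out---the connectedness of the contactomorphism group of $\mathbb S^3$ (which the paper attributes to Eliashberg~\cite{Eli} rather than~\cite{Ge}), the contact isotopy extension theorem, and the local Darboux normal form at a vertex---are the right ones. The recognition that $C^1$-continuity of the arc families controls the tangent rays at vertices and hence rules out ``wilder'' behavior is exactly the point of condition~(2)(d).

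One genuine wrinkle deserves attention. In the direction (2)$\Rightarrow$(1) you write that the sequence of local moves can be ``absorbed into a single angle adjustment'' via Proposition~\ref{adjustment}(ii). But an angle adjustment in Definition~\ref{adjustment-def} is localized at a \emph{single} point $p$, whereas a Legendrian isotopy of a graph with several vertices of valence $\geqslant 3$ may alter the tangent configurations at all of them independently. Proposition~\ref{adjustment}(ii) lets you normalize adjustments at a \emph{fixed} vertex, but it does not collapse adjustments at \emph{different} vertices into one. What you can legitimately conclude is that $\Gamma_1$ is obtained from $\Gamma_0$ by a finite composition of angle adjustments (at most one per vertex, since by Proposition~\ref{adjustment}(i) the net change of tangent rays at each vertex is realized by a single local adjustment there) followed by a contactomorphism; the supports of these adjustments can be taken disjoint, so they commute and can be performed simultaneously. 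Either the proposition is to be read with ``an angle adjustment'' meaning this composite operation, or its statement is slightly informal on this point---but your argument is correct once this is made explicit, and the issue is one of bookkeeping rather than a real gap.
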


We skip the elementary but boring proof.

\begin{defi}
If the
two equivalent conditions from Proposition~\ref{equivalent-equivalence}
hold we say that \emph{the Legendrian graphs $\Gamma_0$ and~$\Gamma_1$ are equivalent}.
If, in addition, an isotopy $\eta$ from $\Gamma_0$ to $\Gamma_1$ satisfying Condition~(\ref{legisotopy3})
from Proposition~\ref{equivalent-equivalence} exists such that $\eta$ is fixed
on a subset $X\subset\mathbb S^3$ we say that \emph{$\Gamma_0$ and $\Gamma_1$ are equivalent
relative to $X$}.
\end{defi}

\begin{rema}
Condition~(\ref{legisotopy3}) from Proposition~\ref{equivalent-equivalence} has the same
meaning as the definition of the Legendrian graph equivalence given in~\cite{OP}.
Our angle adjustment operation is a generalization of the notion of a canonical smoothing defined in~\cite{EF1,EF2},
which can be considered as an angle adjustment applied to a Legendrian link so that
the result is a smooth Legendrian link. As noted in~\cite{OP} the results of two
different canonical smoothings are related by a contactomorphism, so extending
the class of Legendrian links from smooth to cusp-free piecewise smooth
neither creates any new class of Legendrian links nor merges
any two distinct classes together.
\end{rema}

\subsection{Rectangular diagrams of graphs}
\begin{defi}
\emph{A rectangular diagram of a graph} is a triple $G=(\Theta,\Phi,E)$, where $\Theta$ and $\Phi$
are finite subsets of $\mathbb S^1$ and $E$ is a subset of $\Theta\times\Phi\subset\mathbb T^2$.

We interprete $\Theta$ as a subset of $\mathbb S^1_{\tau=1}\subset\mathbb S^3$ and $\Phi$
as a subset of $\mathbb S^1_{\tau=0}$.
\emph{The graph associated with $G$} is the following union:
$$\widehat G=\Theta\cup\Phi\cup\Bigl(\bigcup_{v\in E}\widehat v\Bigr)\subset\mathbb S^3.$$
\end{defi}

Unless we want to deal with graphs that have isolated vertices we
can forget about $\Theta$ and $\Phi$ in this definition and understand
a rectangular diagram of a graph simply as a finite subset
of $\mathbb T^2$. Indeed, if $\widehat G$ does not have isolated vertices,
then $\Theta$ and $\Phi$ can be recovered from $E$,
since they are just the projections of $E\subset\mathbb S^1\times\mathbb S^1$ to
the $\mathbb S^1$'s. In this paper graphs with isolated vertices
are not needed, but we prefer to give the general definition for future reference.

In the sequel, for simplicity, we assume all considered graphs not to have isolated
vertices, so `a rectangular diagram of a graph' will mean just a finite subset of the
torus $\mathbb T^2$. The following result obviously remains true if isolated vertices are allowed.

Note that all graphs of the form $\widehat G$, where $G$ is a rectangular diagram
of a graph, are Legendrian graphs. They also remain Legendrian
if $\xi_+$ is replaced by $\xi_-$ in the definition.

\begin{prop}\label{leg-appr-pcinciple}
Let $G$ be a rectangular diagram of a graph and $\Gamma$
a Legendrian graph such that $\Gamma\cap\widehat G\subset\mathbb S^1_{\tau=0}\cup\mathbb S^1_{\tau=1}$
and $\Gamma\cup\widehat G$ is also a Legendrian graph. Then there exists
a rectangular diagram of a graph~$G'$ such that the Legendrian graphs
$\widehat{G'}$ and $\Gamma$ are equivalent relative to $\widehat G$.
\end{prop}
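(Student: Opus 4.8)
The statement is essentially a Legendrian-graph analogue of the Approximation Principle for links, combined with the angle-adjustment machinery of Proposition~\ref{adjustment} and the characterization of equivalence in Proposition~\ref{equivalent-equivalence}. The plan is to push $\Gamma$ off the binding circles $\mathbb S^1_{\tau=0,1}$ except at the prescribed intersection points with $\widehat G$, approximate its torus projection by a rectangular picture, and then fix up the tangency data at the two distinguished circles using angle adjustments.

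First I would isolate what happens near $\mathbb S^1_{\tau=0}\cup\mathbb S^1_{\tau=1}$. By hypothesis $\Gamma\cap\widehat G\subset\mathbb S^1_{\tau=0}\cup\mathbb S^1_{\tau=1}$; each such point $p$ lies on a side $\widehat v$ of $\widehat G$ (for $v\in E$) or is a vertex of $\widehat G$ lying on one of the circles. Since $\Gamma\cup\widehat G$ is Legendrian, at each such $p$ the edges of $\Gamma$ emanating from $p$ are tangent to the contact plane $\xi_+(p)$, which — at a point of $\mathbb S^1_{\tau=0}$ or $\mathbb S^1_{\tau=1}$ — is orthogonal to the corresponding circle; so $\Gamma$ leaves $p$ transversally to that circle, just as the arcs $\widehat v$ do. I would first perturb $\Gamma$, relative to $\widehat G$ and keeping it Legendrian, so that away from a small neighborhood of $\widehat G$ it is disjoint from both binding circles and its torus projection is a generic immersed graph with only transverse double points, each page $\mathscr P_\theta$ meeting it in finitely many points. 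Near each $p\in\Gamma\cap(\mathbb S^1_{\tau=0}\cup\mathbb S^1_{\tau=1})$ I would arrange $\Gamma$ to coincide, inside a small ball, with a union of arcs of the form $\widehat{v}$ (this is possible by a Legendrian isotopy supported near $p$, using that Legendrian arcs through $p$ are determined by their $xy$-projection together with the $z$-value at one point, exactly as in the proof of Proposition~\ref{adjustment}).

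Next, on the complement of these small balls, I would apply the Approximation Principle directly: approximate the torus projection of this part of $\Gamma$ by the picture of a framed rectangular diagram of a graph, in a small enough neighborhood that the approximating graph is (Legendrian-)isotopic to $\Gamma$ relative to $\widehat G$, with the over/under data dictated by the $\tau$-coordinate. This produces a rectangular diagram $G'$ with $\widehat{G'}$ \emph{smoothly} isotopic to the perturbed $\Gamma$ through Legendrian graphs away from the distinguished circles, and matching it exactly in the small balls. The edges of $\widehat{G'}$ meet $\mathbb S^1_{\tau=0},\mathbb S^1_{\tau=1}$ orthogonally, which is consistent with the behavior of $\Gamma$ near those points. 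The only residual discrepancy is that at a vertex of $\widehat{G'}$ lying on a binding circle, the cyclic order and tangent rays of the emanating edges need to match those of $\Gamma$; but at such a point all relevant rays lie in the contact plane and one can make them agree by an angle adjustment in the sense of Definition~\ref{adjustment-def}, invoking Proposition~\ref{adjustment}(i). Collecting everything: $\widehat{G'}$ is obtained from a graph equivalent to $\Gamma$ (relative to $\widehat G$) by finitely many angle adjustments and a contactomorphism, hence $\widehat{G'}$ and $\Gamma$ are equivalent relative to $\widehat G$ in the sense of Proposition~\ref{equivalent-equivalence}.

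The main obstacle is bookkeeping at the binding circles: one must ensure that the Legendrian isotopy, the rectangular approximation, and the angle adjustments can all be carried out \emph{relative to} $\widehat G$ — i.e.\ fixing $\Gamma$ at every point of $\Gamma\cap\widehat G$ — while never leaving the class of Legendrian graphs, and in particular never creating a cusp. This is delicate precisely because, near a point $p\in\Gamma\cap\widehat G$ on a binding circle, both the tangent plane $\xi_+(p)$ and the constraint of not introducing singularities restrict how the graph may be moved; the argument of Proposition~\ref{adjustment} shows such local modifications exist, and the verification that the pieces fit together into a single Legendrian isotopy satisfying Condition~(\ref{legisotopy3}) of Proposition~\ref{equivalent-equivalence} is the technical heart, though it is of the same ``elementary but boring'' character as the proof omitted there.
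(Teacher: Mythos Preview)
Your overall strategy --- use the torus projection as a front-like projection and approximate by a rectangular picture --- is the same as the paper's, but two concrete points in your plan go the wrong way.

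First, you perturb $\Gamma$ so that \emph{away} from $\widehat G$ it is disjoint from the binding circles $\mathbb S^1_{\tau=0,1}$. The paper does the opposite: it first applies a contactomorphism that moves \emph{all} vertices of $\Gamma$ \emph{onto} $\mathbb S^1_{\tau=0}\cup\mathbb S^1_{\tau=1}$. This is essential, because for any rectangular diagram $G'$ the graph $\widehat{G'}$ has its higher-valence vertices only on the binding circles; if a vertex of $\Gamma$ sits in the bulk, as in your setup, you have not explained how the approximation produces a corresponding vertex of $\widehat{G'}$, nor how the Legendrian isotopy carries one to the other. Moving vertices onto the binding circles first reduces the problem to approximating individual edges.

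Second, you describe the torus projection of the perturbed $\Gamma$ as ``a generic immersed graph with only transverse double points''. This is not correct: since $\tau$ is determined by the slope via $\tau=\tfrac{2}{\pi}\arctan\sqrt{-d\varphi/d\theta}$, the torus projection of a Legendrian arc behaves like a front projection and generically has \emph{cusps}, namely at points where the arc is tangent to a fiber $\widehat v$. The paper's approximation is a staircase approximation of the negative-slope sub\-arcs between cusps, with explicit rules for how the staircases match at cusps and at endpoints on the binding circles (the six conditions listed there). Without handling cusps you cannot guarantee that the approximating $\widehat{G'}$ is Legendrian-isotopic to $\Gamma$, since the crossing/over--under data is dictated by slope and reverses across a cusp. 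Once vertices are on the binding circles and cusps are accounted for, the angle-adjustment step you mention is not needed: the staircase endpoints already meet the binding circles with the correct horizontal/vertical tangents.
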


\begin{proof}
The only thing we need is to extend the Approximation Principle to Legendrian graphs.
To this end, we look more closely at the torus projections of Legendrian links,
which are similar in nature to widely explored front projections.

Indeed, let $\gamma$ be a Legendrian curve disjoint from $\mathbb S^1_{\tau=0}$ and $\mathbb S^1_{\tau=1}$.
Its torus projection locally is a cusped curve, and $\gamma$ can be
recovered from the torus projection since the $\tau$ coordinate is determined
by the slope $d\varphi/d\theta$:
$$\tau=\frac2\pi\arctan\sqrt{-d\varphi/d\theta}.$$
The slope of the torus projection is everywhere negative and continuous,
and the larger the absolute value of the slope the `higher' is the
corresponding point of the curve. The slope tends to $\infty$
when $\gamma$ approaches~$\mathbb S^1_{\tau=1}$ and to $0$ when
$\gamma$ approaches $\mathbb S^1_{\tau=0}$.

Thus, a Legendrian curve disjoint from $\mathbb S^1_{\tau=0}$ and $\mathbb S^1_{\tau=1}$
can be recovered from its torus projection, and an isotopy in the class
of Legendrian curves can be described in terms of the projection.

Now we show how to produce $G'$. First, we apply an isotopy that will move all
vertices of $\Gamma$ to~$\mathbb S^1_{\tau=0}\cup\mathbb S^1_{\tau=1}$
(this can be realized
by a contactomorphism of $\mathbb S^3$ due to~\cite[Theorem~2.6.2]{Ge})
and make the rest of~$\Gamma$ have only finitely many points of tangency with
arcs of the form $\widehat v$, $v\in\mathbb T^2$. Such tangencies outside of
the circles $\mathbb S^1_{\tau=0,1}$ are responsible for cusps of the torus projection.

Now the torus projection of every edge of $\Gamma$ is a cusped open
arc (in general, with self-intersections)
that approaches two distinct points at the ends, and at each
of these points have either vertical or horizontal slope. Cusps
divide this arc into finitely many subarcs of negative slope.

We approximate each of these subarcs by a staircase arc so that (see Fig.~\ref{staircase}):
\begin{figure}[ht]
\raisebox{20pt}{\includegraphics{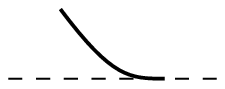}}\hskip0.5cm\raisebox{32pt}{$\longrightarrow$}\hskip0.5cm
\raisebox{20pt}{\includegraphics{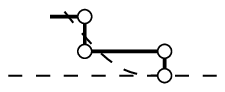}}\hskip1cm
\includegraphics{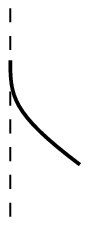}\hskip0.5cm\raisebox{32pt}{$\longrightarrow$}\hskip0.5cm\includegraphics{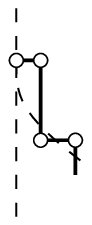}\\
\includegraphics{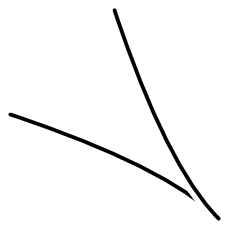}\hskip0.5cm\raisebox{32pt}{$\longrightarrow$}\hskip0.5cm\includegraphics{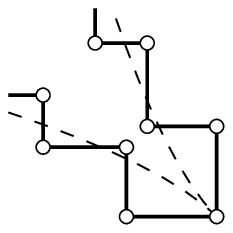}\hskip1cm
\includegraphics{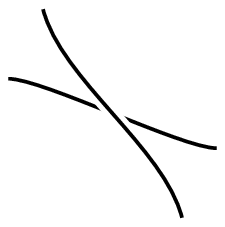}\hskip0.5cm\raisebox{32pt}{$\longrightarrow$}\hskip0.5cm\includegraphics{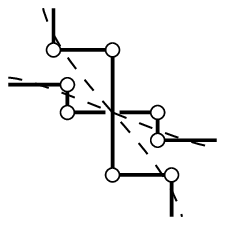}
\caption{Approximating the torus projection of a Legendrian graph by staircases}\label{staircase}
\end{figure}
\begin{enumerate}
\item
there is an isotopy from the subarc to the corresponding staircase
in $\mathbb T^2\setminus G$;
\item
intersections of the staircases occur exactly at intersections of the corresponding subarcs;
\item
the endpoints of the staircase arcs are the same as those of the approximated
subarcs;
\item
if the subarc has vertical (respectively, horizontal) slope at an endpoint,
then the staircase approximation has horizontal (respectively, vertical) one;
\item
at every cusp of the torus projection the corresponding staircases arrive vertically,
and the other horizontally;
\item
no two edges in the staircases occur at the same meridian or longitude, and
no edge occurs on a meridian or longitude already occupied by a vertex of $G$.
\end{enumerate}
If the approximation is fine enough, then
the set of all endpoints of the edges of the obtained staircases works for $G'$. We leave
the easy details to the reader.
\end{proof}

\section{Giroux's convex surfaces}\label{giroux}
\subsection{Definition}

Giroux's original definition is given for closed $C^\infty$-smooth orientable surfaces.
For surfaces with boundary it is naturally generalized for two closely related cases: surfaces
with Legendrian boundary~\cite{honda} and surfaces with
transverse boundary \cite{EVH-M}. We will be dealing only with surfaces with Legendrian boundary.

In order to adapt the language of rectangular diagrams to convex surfaces we need to extend
the class of considered surfaces to surfaces with corners. In particular, we downgrade their smoothness class
to~$C^1$, which will be crucial at some point. Many structural results about
convex surfaces remain true and require no or very little modification of the proof.

Another feature here is that we generalize Giroux's convexity to non-orientable surfaces.
This generalization is quite obvious and straightforward, and many known facts about convex
surfaces can be generalized accordingly at no effort.

However, we will exploit the fact that the standard contact structure of $\mathbb S^3$
is coorientable.

We define convex surfaces only with respect to the standard contact structure on $\mathbb S^3$.
The definition can be obviously extended to an arbitrary contact $3$-manifold.

\begin{defi}
A vector field in an open subset $U$ of $\mathbb S^3$
is called \emph{contact} if it generates a flow in $U$ that
preserves the standard contact structure.

A line element field (i.e.\ a smooth assignment of an unordered pair of opposite
tangent vectors to each point) in $U\subset\mathbb S^3$ is called \emph{contact}
if its restriction to every simply connected open subset $V$ of $U$
has the form $\{u,-u\}$, where $u$ is a contact vector field on $V$.
\end{defi}

\begin{defi}
A surface with corners $F$ is called \emph{convex (with respect to the standard
contact structure)} if $\partial F$
is a Legendrian link and there exists an open neighborhood $U$ of $F$
and a contact line element field in $U$ transverse to $F$.
\end{defi}

Note that to be convex in this sense is a global property. Locally every
surface is convex.

\begin{defi}\label{convex-equiv}
Let $F_0$ and $F_1$ be two convex surfaces with corners. We say that they \emph{are
equivalent} if there exists a $C^0$-isotopy $\eta:[0,1]\times F_0\rightarrow\mathbb S^3$
and finitely many points $p_1,\ldots,p_k\in\partial F_0$ such that
\begin{enumerate}
\item
for every $t\in[0,1]$ the image $\eta_t(F_0)$ is a convex surface with corners;
\item
$\eta_0=\mathrm{id}|_{F_0}$, $\eta_1(F_0)=F_1$;
\item
the restriction of $\eta$ to $[0,1]\times(F_0\setminus\{p_1,\ldots,p_k\})$
is of smoothness class $C^1$;
\item
the tangent plane to $\eta_t(F_0)$ at $\eta_t(p)$ depends continuously on $(t,p)\in[0,1]\times F_0$.
\end{enumerate}
\end{defi}

The following two statements are given for completeness of the exposition and are not used in the
sequel. We skip their proofs, which are easy.

\begin{prop}\label{smoothing-boundary}
Let $F$ be a convex surface with corners, $p_1,\ldots,p_k$ be
all the singularities of $\partial F$. Then for any $\varepsilon>0$
there exists a convex surface with corners $F'$ equivalent to $F$
such that $F'$ coincides with~$F$ outside of the union of the
$\varepsilon$-neighborhoods of $p_1,\ldots,p_k$,
and $\partial F'$ is obtained from $\partial F$ by
a canonical smoothing.
\end{prop}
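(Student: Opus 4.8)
The plan is to reduce the global statement to a purely local modification near each singularity $p_i$ of $\partial F$, using that convexity is detected by a contact line element field transverse to $F$ and that the canonical smoothing of Eliashberg--Fraser changes $\partial F$ only in an arbitrarily small neighborhood of each cusp (here we use our angle adjustment machinery: Proposition~\ref{adjustment} lets us realize the smoothing of the Legendrian boundary by an explicit local model in Darboux coordinates).

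\textbf{Step 1: localize.} First I would fix a small $\varepsilon>0$ and choose disjoint closed balls $B_i\ni p_i$ of radius $<\varepsilon$ so that inside each $B_i$ the surface $F$ looks, up to a $C^1$ diffeomorphism preserving the standard contact structure (Darboux), like a standard pizza slice attached to an arc with a single cusp at $p_i$, and outside $\bigcup_i B_i$ the boundary $\partial F$ is smooth Legendrian. Away from $\bigcup_i B_i$ we will not move $F$ at all, so the contact line element field witnessing convexity is unchanged there; the entire problem is to produce, inside each $B_i$, a convex surface with corners that agrees with $F$ near $\partial B_i$ (and in a tubular neighborhood of $\partial B_i$) but whose boundary arc is the canonically smoothed one.

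\textbf{Step 2: the local model.} Inside a single $B_i$, I would write down an explicit family of surfaces with corners $F_s$, $s\in[0,1]$, interpolating between $F\cap B_i$ ($s=0$) and a surface $F_1$ whose boundary arc is smooth (the canonical smoothing), keeping $F_s$ fixed near $\partial B_i$, keeping the tangent plane along the boundary arc varying continuously, and keeping $\partial F_s$ Legendrian throughout (this is exactly an angle adjustment in the sense of Definition~\ref{adjustment-def}, carried along a one-parameter family). Convexity of each $F_s$ is the content of the next step. Concatenating these local isotopies over all $i$, and extending by the identity elsewhere, produces the required $C^0$-isotopy $\eta$ of Definition~\ref{convex-equiv}, smooth off the finite set $\{p_1,\dots,p_k\}$ and with continuously varying tangent planes.

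\textbf{Step 3: preserving convexity — the main obstacle.} The crux is to check that every intermediate surface $F_s$ is convex, i.e.\ admits a contact line element field transverse to it on a neighborhood. The honest way is to observe that near a cusp of a surface with Legendrian boundary one has a standard local contact model in which an explicit contact vector field transverse to the pizza slice can be written down (a rotational/radial contact field adapted to the Darboux form $dz+x\,dy-y\,dx$), and that this transversality is an open condition stable under the small $C^1$ perturbation of the slice effected by the angle adjustment; then one patches this local transverse line element field to the given one outside $B_i$ using a partition of unity, noting that transversality to a surface is convex (a convex combination of vectors on the same side of the tangent plane stays on that side) so the patching preserves both the contact property on simply connected pieces and transversality. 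The subtlety I expect to have to be careful about is that ``contact'' is not preserved under convex combinations, only transversality is — so the patching must be arranged on a cover by simply connected sets where the line element field is genuinely $\{u,-u\}$ for a contact $u$, and the combination is of the $u$'s, exploiting that the two candidate contact fields (the old one and the local model) are $C^0$-close and transverse to the same surface on the overlap; this is the standard argument behind the stability of Giroux convexity under $C^\infty$ (here $C^1$) small perturbations, and it goes through verbatim for surfaces with corners because the corner locus sits on $\partial F$ where the line element field is only required to be transverse, not tangent to anything in particular. I would then simply cite that this is ``easy'' in the spirit of the paper's remark preceding the proposition and leave the partition-of-unity bookkeeping to the reader.
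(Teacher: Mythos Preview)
The paper explicitly omits this proof (``we skip their proofs, which are easy''), so there is no argument to compare against; your outline is essentially what the authors have in mind. However, Step~3 is more elaborate than necessary, and the patching you propose is precisely where a genuine gap could open. You are right that $C^\infty(\mathbb S^3)$-linear combinations of contact vector fields are in general not contact, so a partition-of-unity argument applied directly to the fields does not work as written; and if you pass to contact Hamiltonians instead (which \emph{can} be patched, cf.\ \eqref{linear}), transversality to $F$ is no longer manifestly preserved.

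The fix is to avoid patching altogether. The line element field $l$ already witnessing the convexity of $F$ is defined on an open neighborhood $U\supset F$; choose $B_i\subset U$ and perform the local isotopy so that every intermediate $F_s$ stays in $U$ and remains transverse to this same $l$. This is easy to arrange. At a corner $p_i$ the two distinct Legendrian tangent rays span $\xi_+(p_i)$, hence $T_{p_i}F=\xi_+(p_i)$, and so $l(p_i)$ is transverse to $\xi_+(p_i)$. In Darboux coordinates write $F\cap B_i$ as a graph $z=f(x,y)$ with $df(0)=0$; shrinking $B_i$ makes $|df|$ uniformly small, and the angle-adjusted pieces $F_s\cap B_i$ can likewise be realized as graphs $z=f_s(x,y)$ over the (varying) domain bounded by the projected Legendrian arc, with $|df_s|$ uniformly small. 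Then every tangent plane to every $F_s$ in $B_i$ is close to $\xi_+(p_i)$, while $l$ is close to $l(p_i)$, so transversality persists throughout and each $F_s$ is convex with the original $l$ as witness.

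One terminological quibble: the $p_i$ are corners, not cusps---cusps are explicitly excluded by Definition~\ref{surf-with-corners}.
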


\begin{prop}
Let $F_0$ and $F_1$ be convex surfaces with smooth boundaries.
Then they are equivalent if and only if
there exists a smooth isotopy between them within the class
of convex surfaces.
\end{prop}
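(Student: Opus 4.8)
The implication from the existence of a smooth isotopy through convex surfaces to equivalence is immediate, since such an isotopy is a special case of the one required in Definition~\ref{convex-equiv}, with no exceptional points ($k=0$). So the plan is to establish the converse.

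Assume $F_0$ and $F_1$ are convex with smooth boundaries and that $\eta\colon[0,1]\times F_0\to\mathbb S^3$ is an isotopy as in Definition~\ref{convex-equiv}, with exceptional points $p_1,\dots,p_k\in\partial F_0$; after a preliminary $C^1$-small smoothing of the interiors of $F_0$ and $F_1$, realized by a short smooth isotopy through convex surfaces (convexity being an open condition), I may assume $F_0$ and $F_1$ are smooth. The first step is to remove the corners of the intermediate surfaces $\eta_t(F_0)$. For every $t$ the boundary $\partial(\eta_t(F_0))$ is a Legendrian link, it varies continuously with $t$, and its corners trace out finitely many continuous arcs over closed subintervals of $[0,1]$, with none at $t=0,1$ because $\partial F_0$ and $\partial F_1$ are smooth. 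I would fix a collar of the boundary and apply the rounding procedure of Proposition~\ref{smoothing-boundary} to $\eta_t(F_0)$ inside it, continuously in $t$, thereby replacing $\eta$ by an isotopy $t\mapsto G_t$ through convex surfaces with \emph{smooth} boundaries, with $G_0=F_0$, $G_1=F_1$, with the same exceptional points, and with $G_t$ coinciding with $\eta_t(F_0)$ away from small neighborhoods of the corners. Convexity of each $G_t$ is supplied by Proposition~\ref{smoothing-boundary}, and the continuity of the family and of its tangent plane field persists because the rounding is a local operation depending continuously on the corner data.

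Once every surface of the family has a smooth boundary, I would smooth the whole family. Because $\eta$ is $C^1$ away from the $p_i$ and the tangent plane to $G_t$ at $\eta_t(p_i)$ is continuous in $(t,i)$, each $G_t$ is, near each $\eta_t(p_i)$, a graph over a continuously varying $2$-plane; hence the failure of $\eta$ to be $C^1$ at the $p_i$ concerns only the parametrization and not the family of surfaces, which is $C^1$-continuous. I would then approximate $t\mapsto G_t$ by a smooth family $t\mapsto H_t$ of smooth surfaces, $C^1$-close to $G_t$ uniformly in $t$, agreeing with $G_t$ at $t=0,1$ and near the (already smooth) boundary, so that $\partial H_t=\partial G_t$ stays smoothly Legendrian for all $t$. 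For a fine enough approximation each $H_t$ is an embedded smooth surface, $t\mapsto H_t$ is an honest isotopy, and---admitting a transverse contact line element field being an open condition in the $C^1$ topology---each $H_t$ is convex. This yields the required smooth isotopy from $F_0$ to $F_1$ through convex surfaces.

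The step I expect to be the main obstacle is making the corner removal genuinely continuous, and smooth away from the $p_i$, in the parameter $t$: corners of $\partial(\eta_t(F_0))$ may have angles tending to $0$ or $2\pi$, may collide, or may be created from (or die at) smooth points, and across such events the naive rounding need not vary continuously. The remedy is to first perturb $\eta$---keeping it an equivalence and fixing the endpoints---so that these events occur one at a time in a fixed local model, with the angles staying inside $(0,2\pi)$, and then to check that the rounding template can be chosen so as to vary continuously through each standard event. Everything else---the persistence of convexity under small $C^1$ perturbations, the upgrade of a $C^1$-close family to an honest isotopy, and the bookkeeping of smoothness classes---is routine, which is presumably why the authors call the proof easy.
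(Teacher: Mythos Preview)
The paper explicitly skips the proof of this proposition, stating only that it is ``easy'' and not used in the sequel. So there is no argument in the paper to compare your proposal against.

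Your outline is essentially the natural one: the implication from a smooth isotopy to equivalence is trivial, and for the converse you round the corners of the intermediate surfaces via Proposition~\ref{smoothing-boundary} and then $C^1$-approximate by a genuinely smooth family, using that convexity is an open condition in the $C^1$ topology. That is the right shape of the argument.

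One unnecessary complication in your discussion: corners cannot ``collide''. By Definition~\ref{convex-equiv} the exceptional points $p_1,\dots,p_k$ are \emph{fixed} points of $\partial F_0$, and the only possible corners of $\partial(\eta_t(F_0))$ are at their images $\eta_t(p_1),\dots,\eta_t(p_k)$; since each $\eta_t$ is an embedding these images remain distinct. So the continuous rounding need only be arranged locally, in $k$ disjoint moving balls, and the only genuine events are the creation and death of a corner (the angle passing through $\pi$) at a given $\eta_t(p_i)$. This considerably simplifies the ``main obstacle'' you describe. Note also that at any genuine corner the two boundary tangent directions are independent Legendrian directions, so the tangent plane to the surface there must coincide with the contact plane; combined with condition~(4) of Definition~\ref{convex-equiv} this gives you good control of the local model near each $\eta_t(p_i)$ and makes the parametric rounding straightforward. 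With these observations your argument goes through, and the authors' assessment that the proof is easy is justified.
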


\subsection{`Rectangular' surfaces are convex}

Here is the main result of this paper.

\begin{theo}\label{main2}
{\rm(i)} For every rectangular diagram of a surface $\Pi$ the associated surface
$\widehat\Pi$ is convex with respect to $\xi_+$ (and, by symmetry,
with respect to $\xi_-$). For any connected component $S$
of $\partial\Pi$ we have
\begin{equation}\label{lengthbound}
|S|\geqslant-2\tb_+(\widehat S;\widehat\Pi).
\end{equation}

{\rm(ii)} Let $R$ be a rectangular diagram of a link,
$L$ a Legendrian link in $\mathbb S^3$ equivalent to
$\widehat R$, and
$F$ a convex surface with corners such that $L\cap F$ is a sublink of $\partial F$.
Suppose that for any connected component $K$ of $L\cap F$ and the respective
component $S$ of~$R$ we have $|S|\geqslant-2\tb_+(K;F)$.

Then there exists a rectangular diagram of a surface $\Pi$ and an isotopy taking the pair $(L,F)$ to $(\widehat R,\widehat\Pi)$ and realizing an equivalence
of the convex surfaces $F$ and $\widehat\Pi$ in the sense of Definition~\ref{convex-equiv}.
\end{theo}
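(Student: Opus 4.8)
The plan is to treat the two parts separately, with part~(i) producing the local model that the isotopy in part~(ii) must match. For part~(i), to see that $\widehat\Pi$ is convex with respect to $\xi_+$ I would verify a version of Giroux's criterion (routinely extended to surfaces with corners and Legendrian boundary): it is enough to exhibit a dividing set for the characteristic foliation of $\widehat\Pi$ with respect to $\xi_+$. I would start from a single tile $\widehat r$. Its normal covector at an interior point always has a nonzero $d\tau$-component, while the normal of $\xi_+$ has none, so $T\widehat r\ne\xi_+$ throughout $\interior(\widehat r)$ and the characteristic foliation is nonsingular there; using the identity $\tan^2(\pi\widetilde h_r/2)=\tan(\pi h_r/2)$, which is exactly what the choice of $\zeta$ in Definition~\ref{tile-def} buys, this foliation is $\ker\bigl(d\varphi+\tan(\pi h_r/2)\,d\theta\bigr)$ over $\interior(r)$, so its leaves have non-positive slope, run from one of the two $\xi_+$-sides $\widehat{(\theta_1,\varphi_2)}$, $\widehat{(\theta_2,\varphi_1)}$ to the other, and sweep across the two $\xi_-$-sides $\widehat{(\theta_1,\varphi_1)}$, $\widehat{(\theta_2,\varphi_2)}$; the orthogonality of $\widehat r$ to $\mathbb S^1_{\tau=0,1}$ controls the behavior near the four vertices of $\widehat r$. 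After a $C^1$-small perturbation of $\widehat\Pi$ supported away from $\widehat{\partial\Pi}$ (permitted inside the equivalence class of convex surfaces by the $C^1$/corner flexibility) the characteristic foliation becomes Morse--Smale without closed leaves, and a dividing set can be taken to be a properly embedded $1$-manifold $\Gamma$ which inside each tile is a single arc joining the midpoints of that tile's two $\xi_-$-sides; these arcs glue across the interior $\xi_-$-edges shared by pairs of tiles, so $\Gamma$ ends transversally on $\widehat{\partial\Pi}$, exactly once on each $\xi_-$-edge and never on a $\xi_+$-edge, and a direct check shows $\Gamma$ divides. Hence $\widehat\Pi$ is convex with respect to $\xi_+$, and with respect to $\xi_-$ by the symmetry $\theta\mapsto-\theta$, which interchanges $\xi_\pm$. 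The inequality~\eqref{lengthbound} does not require convexity: by Proposition~\ref{boundaryframing} the boundary framing $f$ of $\widehat S$ induced by $\widehat\Pi$ is admissible, and the number $m$ of its arcs $\widehat v$, $v\in S$, along which $\widehat\Pi$ is tangent to $\xi_-$ is at most $|S|$; tracing the computation in the proof of Proposition~\ref{tbproperties}(iii) gives $\lk(\widehat S,{\widehat S}^{\widehat\Pi})=\langle f\rangle=\tb_+(\widehat S)+m/2$, whence $\tb_+(\widehat S;\widehat\Pi)=\tb_+(\widehat S)-\langle f\rangle=-m/2\geqslant-|S|/2$.

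For part~(ii) the strategy has three stages: move $L$ onto $\widehat R$ and put the rest of $\partial F$ into rectangular form; use convexity of $F$ together with the hypothesis $|S|\geqslant-2\tb_+(K;F)$ to make $F$ tangent to $\xi_+$ or $\xi_-$ along the whole of $\partial F$; then quote Proposition~\ref{isotopy-to-rectangular}. In detail: by the Legendrian equivalence $L\simeq\widehat R$ and Proposition~\ref{leg-appr-pcinciple} there is a contact ambient isotopy carrying $L$ to $\widehat R$, and we transport $F$ along it. The components of $\partial F$ disjoint from $\widehat R$ are replaced by Legendrian links of the form $\widehat{R'}$ with all Thurston--Bennequin numbers very negative, via a Legendrian/convex analogue of Lemma~\ref{more-components} (staircase approximation of torus projections followed by destabilizations, keeping $F$ convex); we then pass from $R$ to $R\cup R'$. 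Now $F$ is convex, so it carries a dividing set $\Gamma_F$ with $\#(\Gamma_F\cap K)=-2\tb_+(K;F)\leqslant|S|$ for each boundary knot $K=\widehat S$. Using Kanda's realization theorem to bring the characteristic foliation in a collar of $\partial F$ into a form adapted to $\Gamma_F$, and spending the surplus of edges guaranteed by the inequality, I would perturb $F$ near $\partial F$ so that along each edge $\widehat v$ of each $\widehat S$ the surface becomes tangent to $\xi_+$ or to $\xi_-$, with precisely $-2\tb_+(K;F)$ of the edges of $S$ being $\xi_-$-edges, placed compatibly with $\Gamma_F$, and with $F$ remaining convex. With $F$ now tangent to $\xi_\pm$ at every point of $\partial F=\widehat R$, I apply Proposition~\ref{isotopy-to-rectangular} with $\widehat{X_1}=\partial F$ and $X_2=R\setminus X_1$, obtaining a rectangular diagram of a surface $\Pi$ and an isotopy $F\to\widehat\Pi$ fixing $\widehat R$ and the tangent plane along it. Finally I would check that this isotopy realizes an equivalence of convex surfaces as in Definition~\ref{convex-equiv}: its four constituent modifications (perturbation transverse to the binding, finger moves, pagewise straightening, and the terminal pagewise matching) are all supported away from $\widehat R$, preserve the admissibility of the boundary framing, and can be arranged $C^1$ off finitely many boundary points with continuously varying tangent planes, so the dividing-set bookkeeping above keeps every intermediate surface convex. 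Composing with the initial contact isotopy yields the required equivalence from $(L,F)$ to $(\widehat R,\widehat\Pi)$.

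The step I expect to be the main obstacle is the middle stage of part~(ii): converting the soft data of a convex $F$ --- its dividing set and the germ of its characteristic foliation along the Legendrian boundary --- into the rigid combinatorial data of rectangular edges carrying the plane fields $\xi_\pm$, using exactly the numerical slack provided by $|S|\geqslant-2\tb_+(K;F)$ and respecting whatever parity or alternation constraint the edge types of a rectangular diagram impose around a component. A secondary but genuine difficulty is verifying that each isotopy invoked, in particular the finger moves of Proposition~\ref{isotopy-to-rectangular}, can be carried out within the class of convex surfaces with corners.
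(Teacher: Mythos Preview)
Your approach to part~(i) is different from the paper's but viable. The paper constructs an explicit contact line element field transverse to $\widehat\Pi$ tile by tile (formula~\eqref{lineelem}) and patches; you instead go through the characteristic foliation and Giroux's criterion, which is essentially what the paper does later when analysing $\widehat\Pi^\kappa$. One correction: your dividing arc in a tile does not join midpoints of the $\xi_-$-sides; in the torus projection it runs diagonally from $(\theta_1,\varphi_1)$ to $(\theta_2,\varphi_2)$, crossing each $-1$-arc once (see the discussion around Fig.~\ref{dividing}). Also note that for $\kappa=0$ the entire $\xi_+$-sides $\widehat b,\widehat d$ are singular, so the criterion applies only after your perturbation (equivalently after passing to $\widehat\Pi^\kappa$), and you then need the easy observation that the family $\kappa\to0$ never produces a $1$-arc. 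Your derivation of~\eqref{lengthbound} via Proposition~\ref{tbproperties} is the same as the paper's.

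For part~(ii) there is a genuine gap, and it is precisely the point you label ``secondary''. Proposition~\ref{isotopy-to-rectangular} is a \emph{topological} statement: its four steps (general-position perturbation, finger moves, pagewise straightening, matching) have no reason to keep the intermediate surfaces convex, and in fact finger moves typically create local extrema of $\theta$ which can introduce closed leaves or $1$-arcs. So simply making $F$ tangent to $\xi_\pm$ along $\partial F$ and then invoking Proposition~\ref{isotopy-to-rectangular} does not produce an equivalence of convex surfaces. The paper's fix is the step you are missing: one must first make not only $\partial F$ but an entire \emph{extended Giroux graph} $\widetilde{\mathscr G}\supset\mathscr G(F)\supset\partial F$ rectangular (of the form $\widehat{G'}$), using angle adjustments (Lemma~\ref{giroux-adjustment}) and Proposition~\ref{leg-appr-pcinciple}. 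Only then is Proposition~\ref{isotopy-to-rectangular} applied, with $\widehat{X_1}=\widetilde{\mathscr G}$ rather than just $\partial F$. Because $F\setminus\widetilde{\mathscr G}$ is a union of discs each foliated by $-1$-arcs, the resulting topological isotopy can be decomposed disc by disc and replaced, via a separate argument (Lemma~\ref{discs-lem}, which uses Bennequin to rule out overtwisted subdiscs and a bypass-type manoeuvre to avoid transient $1$-arcs), by an isotopy through convex surfaces. Your ``middle stage''---distributing the $\xi_\pm$-tangencies along boundary edges using the slack $|S|\geqslant-2\tb_+(K;F)$---is indeed needed, and the paper handles it by moving/creating boundary nodes of $\mathscr F(F)$ so that each arc $\widehat v$ carries at most one $-1$-arc; but this is Step~3--4 of a seven-step argument, not the crux.
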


\begin{rema}
The assertion of the theorem can be strengthen by requiring that the isotopy
from $(L,F)$ to $(\widehat R,\widehat\Pi)$ extend a prescribed isotopy from
$L$ to $\widehat R$ satisfying Condigion~\eqref{legisotopy3} from Proposition~\ref{equivalent-equivalence}. The proof of this requires
essentially no new idea but some more technical details, which we
prefer not to overload the paper by.
\end{rema}

\begin{proof}[Proof of part (i) of Theorem~\ref{main2}]
A contact line element field transverse to $\widehat\Pi$ can be constructed almost explicitly.
Let $r=[\theta_1,\theta_2]\times[\varphi_1,\varphi_2]$ be a rectangle from $\Pi$.
Pick a small $\varepsilon>0$ and take arbitrary two $C^\infty$-functions
$a:[\theta_1-\varepsilon,\theta_2+\varepsilon]\rightarrow[-1,1]$,
$b:[\varphi_1-\varepsilon,\varphi_2+\varepsilon]\rightarrow[-1,1]$
such that:
\begin{enumerate}
\item
$a(\theta)=1$ if $\theta\in[\theta_1-\varepsilon,\theta_1+\varepsilon]$,
$b(\varphi)=-1$ if $\varphi\in[\varphi_1-\varepsilon,\varphi_1+\varepsilon]$;
\item
$a(\theta)=-a(\theta_1+\theta_2-\theta)$ for all $\theta$,
$b(\varphi)=-b(\varphi_1+\varphi_2-\varphi)$ for all $\varphi$;
\item
$a'(\theta)<0$ if $\theta\in(\theta_1+\varepsilon,\theta_2-\varepsilon)$;
$b'(\varphi)>0$ if $\varphi\in(\varphi_1+\varepsilon,\varphi_2-\varepsilon)$.
\end{enumerate}
Such functions obviously exist. The following line element field
defined in the domain
$$W_r=[\theta_1-\varepsilon,\theta_2+\varepsilon]*[\varphi_1-\varepsilon,\varphi_2+\varepsilon]\subset{\mathbb S^1}*\mathbb S^1=\mathbb S^3$$
is then contact:
\begin{equation}\label{lineelem}
l_r=\pm\Bigl(a(\theta)\,\partial_\theta+b(\varphi)\,\partial_\varphi+\frac{\sin{\pi\tau}}{2\pi}\bigl(b'(\varphi)-a'(\theta)\bigr)\,\partial_\tau\Bigr),
\end{equation}
which is verified by a direct check. One can also see that it is transverse to the tile $\widehat r$. Indeed, all three
summands in the right hand side define vector fields that point at the same side of $\widehat r$ unless they vanish.
(This is because the function $h_r$ used in the definition of $\widehat r$ is concave in $\theta$,
convex in $\varphi$, and symmetric in the center of the rectangle.)
The first summand vanishes only at the line $\theta=(\theta_1+\theta_2)/2$, the second at the line $\varphi=(\varphi_1+\varphi_2)/2$,
so, their sum vanishes only at the center of $r$, where the third summand doesn't.

If $r$ and $r'$ are two rectangles in $\Pi$ sharing a vertex $v$, and $p$ is an interior point of $\widehat v$,
then in a small neighborhood of $p$ the line element fields $l_r$ and $l_{r'}$
coincide and have the form either $\pm(\partial_\theta+\partial_\varphi)$ or
$\pm(\partial_\theta-\partial_\varphi)$. One can also see that for any $r\in\Pi$
the restriction of $l_r$ on $\mathbb S^1_{\tau=0}$ coincides with $\pm\partial_\varphi$
(where defined) and on $\mathbb S^1_{\tau=1}$ with $\pm\partial_\theta$,
and that the line element field $l_r$ is continuous where defined.

For each $r\in\Pi$, denote by $V_r$ the intersection of $W_r$ with the
$\varepsilon$-neighborhood of $\widehat r$.
If $\varepsilon$ is chosen small enough then the interiors of $V_r$ and $V_{r'}$ have
a non-empty intersection only if $r$ and $r'$ share a vertex (or two vertices, or four vertices),
and then $l_r=l_r'$ in $V_r\cap V_{r'}$.

Denote by $l$ the line element field in $V=\bigcup_{r\in\Pi}V_r$ whose restriction on $V_r$
coincides with $l_r$. We are almost done, but there are two minor problems with $l$.
First, $V$ does not contain an open neighborhood of $\widehat\Pi$ if $\partial\widehat\Pi\ne\varnothing$. Namely,
the intersection $\partial\widehat\Pi\cap(\mathbb S^1_{\tau=0}\cup\mathbb S^1_{\tau=1})$
does not lie in the interior of $V$. This can be easily resolved by adding more rectangles to $\Pi$ so
as to obtain a new diagram $\Pi'$ such that $\partial\widehat\Pi$
is contained in the interior of $\widehat{\Pi'}$. If we now construct $V$ using $\Pi'$
instead of $\Pi$, then the whole of $\widehat\Pi$ will be contained in the interior of $V$.

The other problem is that $l$ is only $C^0$ at $V\cap(\mathbb S^1_{\tau=0}\cup\mathbb S^1_{\tau=1})$.
(Everywhere else in $V$ it is $C^\infty$.) This issue can be resolved as follows.

In general, if $\alpha$ is a standard contact form,
then any contact field $X$ is recovered uniquely from the function $f=\alpha(X)$,
which can be an arbitrary smooth function.
This is done by solving at every point the following non-degenerate linear system
in $(X,c)$, where $c$ is an additional variable function:
\begin{equation}\label{linear}
\alpha(X)=f,\quad d\alpha(X,\cdot)+df+c\alpha=0.
\end{equation}

If $f$ is varied slightly in the $C^1$-topology, then the corresponding $X$ is varied slightly
in $C^0$.

Now the point is that the evaluation of the $1$-form in the right
hand side of~\eqref{xi+} on the line element field~\eqref{lineelem}
gives a $C^1$-smooth (two-valued) function. We can approximate it by a $C^\infty$
function making a $C^1$-small perturbation. This will give a $C^\infty$ line element
field $C^0$-close to $l$, which is enough to ensure that the perturbed field
is still transverse to $\widehat\Pi$.

Inequality~\eqref{lengthbound} follows from Proposition~\ref{tbproperties}.
\end{proof}

To prove part~(ii) of Theorem~\ref{main2} we need some more preparatory work.

\subsection{Characteristic foliation}
As shown in \cite{Gi} in order to see that a surface is convex in Giroux's sense
it is enough to know the characteristic foliation of the surface, which we define below,
and it
is easier to examine this foliation than to seek for a transverse
contact line element field.

\begin{defi}
Let $F$ be a surface with corners. \emph{The characteristic foliation}
of $F$ is the singular foliation defined by the line field
$T_pF\cap\xi_+(p)$, where
$T_pF$ denotes the tangent plane to $F$ at $p$.
This foliation will be denoted by $\mathscr F(F)$.
We endow it with the coorientation that is inherited from the coorientation of $\xi_+$.

The foliation $\mathscr F(F)$ will be called \emph{nice} if it has only finitely many
singularities, i.e.\ points where $F$ is tangent to $\xi_+$, and each
of them is either an elliptic singularity (also called \emph{a node}) near which the foliation is radial, or a (simple) saddle.
Additionally, we require that if $p$ is a singularity of $\partial F$, then it is a node of $\mathscr F(F)$.

The foliation $\mathscr F(F)$ will be called \emph{very nice} if it is nice and
has neither a closed leaf nor a cooriented saddle connection cycle.
\end{defi}

\begin{prop}\label{veryniceprop}
Let $F$ be a convex surface with corners. Then
there exist convex surfaces with corners~$F'$ and $F''$ equivalent
to $F$ such that $\partial F'=\partial F''=\partial F$, the foliations $\mathscr F(F')$
and $\mathscr F(F'')$ are nice and very nice, respectively, and,
moreover, the surface $F'$ can be chosen arbitrarily $C^1$-close to $F$.
The surface $F''$ can be chosen $C^0$-close to $F$.
\end{prop}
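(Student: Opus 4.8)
The plan is to prove Proposition~\ref{veryniceprop} by a two-stage perturbation argument, following the classical scheme of Giroux but adapted to the $C^1$ setting with corners. First I would establish the existence of $F'$ with a \emph{nice} characteristic foliation by a generic $C^1$-small perturbation of $F$ that keeps $\partial F$ fixed and keeps $F$ convex. The key observation is that convexity is an \emph{open} condition in the $C^1$-topology: if $l$ is a contact line element field transverse to $F$ in a neighborhood $U$, then $l$ remains transverse to any surface $C^1$-close to $F$ inside $U$, so small $C^1$-perturbations of $F$ remain convex. Hence I have the freedom to perturb $F$ generically, keeping it fixed near $\partial F$ (where the foliation is already controlled, since the singularities of $\partial F$ are required to be nodes — this can be arranged near the boundary by a local model computation using that $\partial F$ is Legendrian and tangent to $\xi_+$ there). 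Genericity of the characteristic foliation — finitely many singularities, each elliptic (radial node) or a simple saddle, and the boundary singularities being nodes — is a transversality statement: the $1$-jet of the defining section of $T_pF\cap\xi_+(p)$ meets the bad locus transversally for generic $F$. This is standard; the $C^1$ smoothness is not an obstacle here because the relevant jet space is the $1$-jet space, and $C^1$-generic sections avoid the codimension-$\geqslant 1$ degeneracies.

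Next I would upgrade $F'$ to a surface $F''$ with a \emph{very nice} foliation — one with no closed leaves and no cooriented saddle connection cycles — allowing now only a $C^0$-small isotopy (this is why the statement asks only for $C^0$-closeness of $F''$). The tool here is the standard "elimination/creation of closed orbits and saddle connections" technique for characteristic foliations of convex surfaces (Giroux; see also Honda's work on convex surface theory). Concretely: a closed leaf of $\mathscr F(F')$ is either the boundary of a region one can collapse, or it can be broken by a $C^0$-small modification of $F'$ in a neighborhood of an annular region swept by the leaf, replacing the rotation near the closed orbit by a configuration with a new pair of cancelling singularities or by pushing the leaf across a node; a cooriented saddle connection cycle is broken similarly by a $C^0$-small perturbation near the cycle. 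One must check at each step that convexity is preserved — again using that convexity is $C^0$-stable \emph{provided} one controls the tangent plane, which is why the isotopy in Definition~\ref{convex-equiv} is required to be $C^1$ away from finitely many points and to have continuously varying tangent plane. The relevant point is that a convex surface remains convex under $C^0$-small isotopies through surfaces whose tangent planes vary continuously, because the contact line element field $l$, being transverse to $F'$ on a compact set, stays transverse to nearby surfaces with nearby tangent planes. Since there are only finitely many closed leaves and saddle connection cycles to destroy (the foliation being nice, hence having finite singular set and, by a standard argument, finitely many such cycles), finitely many such modifications suffice and their composition is the desired equivalence from $F'$ to $F''$.

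The main obstacle I anticipate is not any single deep theorem but the bookkeeping required to verify that all the perturbations can be carried out \emph{through convex surfaces with corners}, respecting the exact notion of equivalence in Definition~\ref{convex-equiv} — in particular keeping $\partial F' = \partial F'' = \partial F$ throughout and keeping the boundary singularities as nodes, while the interior modifications (especially the destruction of saddle connections near the boundary, if any saddle connection runs close to $\partial F$) must be localized away from $\partial F$ or carefully modeled near it. A secondary technical point is that, because the smoothness class is only $C^1$, one cannot invoke Morse–Smale genericity results verbatim; instead one works directly with the line element field defining $\mathscr F(F')$ and its local normal forms, which is routine but must be stated carefully. Given that the authors explicitly say "We skip their proofs, which are easy" for the two \emph{preceding} propositions but do provide the statement of this one for use in part~(ii) of Theorem~\ref{main2}, I expect the intended proof to be exactly this Giroux-style perturbation argument with the openness-of-convexity observation doing the heavy lifting, and I would present it at roughly that level of detail, citing \cite{Gi} and \cite{honda} for the elimination lemmas.
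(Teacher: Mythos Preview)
Your overall two-stage scheme matches the paper's: a $C^1$-small genericity perturbation to get finitely many Morse-type singularities, followed by a $C^0$-small modification that \emph{creates} node--saddle pairs on closed leaves and saddle connection cycles to break them (Giroux's elimination lemma run backwards). Two points in your justification are off, however, and the paper handles them differently.

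First, ``nice'' requires elliptic singularities to be \emph{radial} nodes. This is not a generic condition --- a typical elliptic singularity of $\mathscr F(F)$ is a spiral focus, not a radial node (see Fig.~\ref{spirals}). Transversality alone does not deliver radiality; the paper gives an explicit local modification (flattening the graph $z=f(x,y)$ by a cutoff $h(\rho/\varepsilon)$ and checking via the contact condition that no new singularities appear) to force radiality near each elliptic point, with extra care at $\partial F$.

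Second, your reason for convexity being preserved during the $C^0$-small stage is not correct. The node--saddle creation rotates the tangent plane at some points all the way through $\xi_+$, so tangent planes do \emph{not} stay close to the original ones and the ``$l$ stays transverse to nearby surfaces with nearby tangent planes'' argument does not apply. The paper instead fixes the contact line element field $l$ from the outset and performs the rotation \emph{around the leaf} $s$ so that the tangent plane remains transverse to $l$ throughout (this is possible precisely because $l$ is transverse to $\xi_+$ along $s$, which the paper verifies). One must also check that the modification creates no new closed leaf; the paper does this by a Stokes-type argument using $d(\alpha/a)\ne 0$ in a neighborhood $U$ of $s$ where $l$ is transverse to $\xi_+$. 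Your proposal waves at both issues (``finitely many closed leaves\ldots by a standard argument''), but these are exactly the places where the $C^1$ setting and the non-$C^1$-smallness of the second stage require explicit work.
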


\begin{proof}These facts are classical for $C^\infty$ surfaces \cite{Gi}. For completeness
of the exposition we provide here a proof with some details omitted.

By a $C^1$-small perturbation of $F$, which can be made within
the class of convex surfaces, we can make the set
of points where $F$ is tangent to $\xi_+$, which will be the singularities
of $\mathscr F(F)$, finite.

We can also make the singularities of $\mathscr F(F)$ be of `Morse type',
i.e.\ nodes and saddles. However, nodes require a little more care as typically
the foliation is not radial around them, see Fig.~\ref{spirals}.
\begin{figure}[ht]
\includegraphics{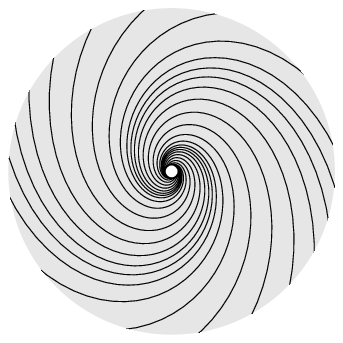}\hskip 2cm\includegraphics{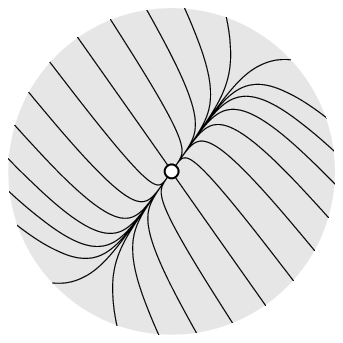}
\caption{Typical nodes of $\mathscr F(F)$}\label{spirals}
\end{figure}
It can be made radial as follows.

Let $p$ be an elliptic singularity in the interior of $F$ and $x,y,z$ local coordinates near $p$
such that $dz+x\,dy-y\,dx$ is a standard contact form,
and $(0,0,0)$ corresponds to $p$. The surface $F$ is locally
a graph of a function $z=f(x,y)$ with a critical point at $p$,
and we have $df_p=0$.

We can modify $f$ in a small neighborhood of $p$ so as
to make it locally constant near $p$.
This can be done so that the new graph $z=f(x,y)$
will have no new tangencies with the contact structure.
For instance, take a small $\varepsilon>0$
and a $C^\infty$-function $h:[0,\infty)\rightarrow[0,1]$ such
that $h(\rho)=0$ if $\rho\in[0,1/2]$, $h(\rho)=1$ if
$\rho\in[1,\infty)$,
and $h'(\rho)>0$ if $\rho\in(1/2,1)$,
and replace $f(x,y)$ with
$\widetilde f(x,y)=h\bigl(\rho/\varepsilon\bigr)f(x,y)$, where $\rho=\sqrt{x^2+y^2}$.

The foliation near $p$ will become radial and no new singularity will be introduced,
which can be seen as follows. New singularities occur at points $(x,y)\ne p$ where the $1$-form $d\widetilde f(x,y)+x\,dy-y\,dx$
vanishes, and this can only happen in the domain $\rho\in[\varepsilon/2,\varepsilon]$. Equivalently,
new singularities correspond to the zeros of the form $\zeta_\varepsilon=
d\bigl(\frac1{\varepsilon^2}\widetilde f(\varepsilon x,\varepsilon y)\bigr)+x\,dy-y\,dx$
in the domain $\rho\in[1/2,1]$. Since this domain is compact, it suffices for us to show that $\zeta_\varepsilon$
does not vanish in it in the limit $\varepsilon\rightarrow0$. The function $\lim_{\varepsilon\rightarrow0}f(\varepsilon x,\varepsilon y)/\varepsilon^2$
is a quadratic form, which can be assumed, without loss of generality, to have the form $(ax^2+by^2)/2$. The elliptic type
of the singularity at $p$ implies $ab>-1$. Now it is a direct check, which is easier if one switches to polar coordinates $\rho,\phi$,
that the $1$-form $\lim_{\varepsilon\rightarrow0}\zeta_\varepsilon=d\bigl(h(\rho)(ax^2+by^2)/2\bigr)+\rho^2\,d\phi$
does not vanish in the domain $\rho\in[1/2,1]$ if $ab>-1$.

If a node $p$ occurs at $\partial F$ we proceed similarly,
but choose the coordinate system more carefully so that
the two arcs of $\partial F$ emanating from $p$ locally have
the form $y/x=\mathrm{const}$, $z=0$, which
is always possible.

If $p$ is a singularity of $\partial F$ and is not a node of $\mathscr F(F)$
we proceed the same way, but this may create
new singularities inside $F$, which should be taken care of as before.

This explains how the surface $F'$ can be produced. In order to produce
$F''$ we need to kill circular leaves and cooriented saddle connection cycles
if there are any. This is done
by using Giroux's elimination lemma in the backward direction.

Namely, let $p$ be a point where $F$ is transverse to the contact structure, and $l$
a contact line element field in a tubular neighborhood of $F$ that is transverse to $F$.
Denote by $s$ the leaf of $\mathscr F(F)$ that comes through $p$.
We may assume that $l$ is transverse to $\xi_+$ at $p$ as this can be achieved by
a small perturbation of~$l$ (if $s$ is a closed leaf or a part of a cooriented saddle connection cycle,
then actually $l$ \emph{must} be transverse to $\xi_+$ along $s$ as we will see in a moment for the case of
a closed leaf).

We can rotate the tangent plane $T_pF$ around $s$ so that it remains
transverse to $l$ and at some moment becomes tangent to $\xi_+$.
We then rotate it a little further, see Fig.~\ref{elim}.
\begin{figure}[ht]
\raisebox{14pt}{\includegraphics{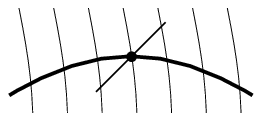}\put(-5,15){$F$}\put(-70,30){$l$}%
\put(-38,15){$p$}\put(-50,30){\tiny$\xi_+(p)$}\hskip0.5cm\raisebox{18pt}{$\longrightarrow$}\hskip0.5cm\includegraphics{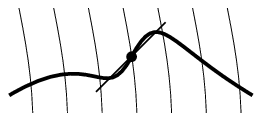}}
\hskip1cm
\includegraphics{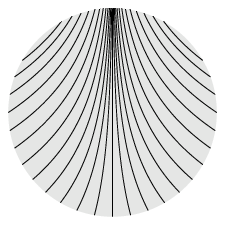}\hskip0.5cm\raisebox{32pt}{$\longrightarrow$}\hskip0.5cm\includegraphics{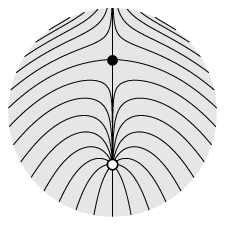}
\caption{Creating a node--saddle pair of singularities}\label{elim}
\end{figure}
This rotation can be realized by a $C^0$-small deformation of $F$ in the class of $C^1$-surfaces
transverse to $l$ so that two singularities, one node and one saddle,
are created. They break up a closed leaf or a saddle connection cycle
if one comes through $p$.

If $s$ is a closed regular leaf, then this operation can be done so that no new closed leaves `parallel' to $s$ are created.
Indeed, in a tubular neighborhood $U$ of $s$, we can introduce a positively oriented
local coordinate system $(x,y,t)\in(-\varepsilon,\varepsilon)\times(-1,1)\times\mathbb S^1$, in which the surface $F$
is defined by the equation $x=0$, the leaf $s$ by the system $x=0$, $y=0$,
the contact line element $l$ is $\pm\partial_x$, and a standard contact form $\alpha$ can
is written as $a(y,t)\,dx+b(y,t)\,dy+c(y,t)\,dt$. We will have $c(0,t)=0$, $b(0,t)\ne0$ for all $t\in\mathbb S^1$. Without
loss of generality we may assume $b(0,t)>0$.

The contact condition $\alpha\wedge d\alpha>0$ reads
\begin{equation}\label{abc}a_t'b-ab_t'+ac_y'-a_y'c>0.\end{equation}
Whenever $a(0,t)=0$ we must have $a_t'(0,t)>0$, which means that the sign of $a$
can change from `$-$' to `$+$' but not back when $t$ runs over the circle, hence, it does not change at all
and we have $a(0,t)\ne0$ for all $t$. This means that $l$ is transverse to $\xi_+$ along $s$.
By choosing a smaller neighborhood if necessary we can ensure that $l$ is transverse to $\xi_+$
everywhere in $U$.

Now we claim that there can be no other closed leaf in $U$.
Indeed, \eqref{abc} means that the $2$-form $d(\alpha/a)$, which is well defined if $a\ne0$, does not
vanish in $U\cap F$, so by Stokes' theorem there can be no closed curve $s'\subset U\cap F$
such $s$ and $s'$ cobound an annulus and $\alpha|_{s'}=0$.

This also implies that the neighborhood $U$ can be chosen so that $\partial U$ is transverse to
the leaves of~$\mathscr F(F)$. Now if we create a node-saddle pair on $s$ as described above so
that $F\setminus U$ is untouched, then no new closed leaf can be created. Indeed,
otherwise we can cancel the node-saddle pair back keeping the newly created leaf unchanged
thus producing an annulus bounded by two closed leaves and such that $l$
is tranverse to~$\xi_+$ everywhere in the annulus, which is impossible.
\end{proof}

Let $F$ be a surface with corners such that $\mathscr F(F)$
is very nice. Suppose that $\partial F$ is Legendrian.
Then every regular leaf of $\mathscr F(F)$ is an arc
approaching singularities of the foliation at the ends.
Indeed, we have forbidden closed leaves and saddle connection cycles
that could have served as a limit cycle for a leaf.

Let $\gamma$ be a regular leaf. Its closure $\overline\gamma$ is a closed Legendrian $C^1$-smooth
arc such that $F$ is tangent to~$\xi_+$ at the endpoints. When
$p$ traverses $\gamma$ the contact plane $\xi_+(p)$ rotates
relative to the tangent plane~$T_pF$ by $-\pi$, $0$, or~$\pi$. We express this by saying
that $\gamma$ is a $-1$-arc, $0$-arc, or $1$-arc, respectively.

All but finitely many leaves are those that connect two nodes.
Each such leaf is a $-1$-arc for the (co)orientation reason.
For a similar reason
a leaf connecting a node to a saddle can be only a $-1$-arc
or a $0$-arc. A saddle connection may be of any type, $-1$, $0$, or $1$,
see Fig.~\ref{arcs}.
\begin{figure}[ht]
\includegraphics{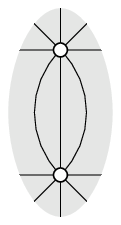}\put(-29,32){$\scriptscriptstyle-1$}\hskip1cm
\includegraphics{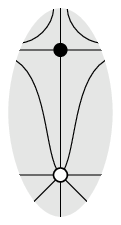}\put(-29,32){$\scriptscriptstyle-1$}\hskip1cm
\includegraphics{arc2.eps}\put(-24.5,32){$\scriptscriptstyle0$}\hskip1cm
\includegraphics{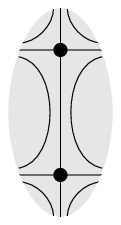}\put(-29,32){$\scriptscriptstyle-1$}\hskip1cm
\includegraphics{arc3.eps}\put(-23.5,32){$\scriptscriptstyle0$}\hskip1cm
\includegraphics{arc3.eps}\put(-23.5,32){$\scriptscriptstyle1$}
\caption{Possible types of regular leaves of a very nice foliation}\label{arcs}
\end{figure}

The following statement gives a simple criteria for a surface to be convex if its
characteristic foliation is very nice. It generalizes the similar
criteria of~\cite{Gi} for oriented $C^\infty$ surfaces. The criteria
for weaker restrictions given in~\cite{Gi}
can also be extended to surfaces with corners (not necessarily oriented),
but we don't need it here.

\begin{prop}\label{no1-arcs}
Let $F$ be a surface with corners such that the characteristic
foliation $\mathscr F(F)$ is very nice. Then $F$ is convex if and only
if $\mathscr F(F)$ has no leaf that is a $1$-arc.
The claim remains true if we allow $\mathscr F(F)$ to have multiple saddles.
\end{prop}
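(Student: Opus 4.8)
The plan is to adapt Giroux's proof of his convexity criterion \cite{Gi}, tracking the modifications needed for surfaces with corners, for non-orientable $F$, and for multiple saddles, and to check that in the very nice case his obstruction reduces to the presence of a $1$-arc. The starting point is the equivalence, also due to Giroux, between convexity and the existence of a \emph{dividing multicurve}: $F$ is convex if and only if $\mathscr F(F)$ is \emph{divided} by some multicurve $\Gamma$ --- that is, $\Gamma$ is transverse to $\mathscr F(F)$ and disjoint from its singularities, $F\setminus\Gamma$ splits as $F_+\sqcup F_-$ with the positive singularities in $F_+$ and the negative ones in $F_-$, and $\mathscr F(F)$ is directed by a vector field $X$ that expands an area form on $F_+$, contracts one on $F_-$, and is transverse to $\Gamma$ pointing out of $F_+$. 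This equivalence and the normal forms behind it survive the passage to surfaces with corners, since near the corners and the Legendrian boundary $F$ is only $C^1$ whereas all the constructions are local and smooth away from $\partial F$ (the behavior at a Legendrian boundary being treated already in \cite{honda}); for non-orientable $F$ one replaces the expansion/contraction condition by the orientation-free requirement that the function $g=d\mu/(\mu\wedge\nu)$ --- with $\mu$ a $1$-form with $\ker\mu=\mathscr F(F)$ and $\nu$ a $1$-form transverse to $\mathscr F(F)$ --- be positive on $F_+$ and negative on $F_-$. Since, by the classification of the regular leaves of a very nice foliation recalled just before the statement (Fig.~\ref{arcs}), only a saddle connection can be a $1$-arc, it then suffices to show: \emph{a very nice $\mathscr F(F)$ is divided if and only if no saddle connection is a $1$-arc}.

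For necessity, I would take $\mathscr F(F)$ divided, with $X$ and $\Gamma$ as above, and orient every regular leaf by $X$. As $X$ points out of $F_+$ along $\Gamma$, an oriented leaf that has once entered $F_-$ remains in it, so it meets $\Gamma$ at most once. Working in the standard model of a neighborhood of a convex surface --- the contact structure being $\ker(\beta_t+u\,dt)$ with $\{u=0\}$ the dividing set --- one compares $T_pF$ with $\xi_+(p)$ along the leaf and finds that the relative rotation is $-\pi$ if the leaf crosses $\Gamma$ and $0$ if it does not; in particular it is never $+\pi$. Put the other way, a $1$-arc would be, in the $X$-orientation, a saddle connection leaving a saddle of $F_-$ and entering a saddle of $F_+$, and such a leaf cannot exist once $X$ points out of $F_+$ along $\Gamma$.

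For sufficiency, I would build the dividing curve directly. The candidate is an isotopic push-off of the zero set of the function $g$ above. Because $\mathscr F(F)$ is very nice --- nodes radial, finitely many simple saddles, no closed leaf, no cooriented saddle connection cycle, and $F$ is $C^1$ near the corners --- there is enough freedom, using Giroux's elimination lemma (employed also in the proof of Proposition~\ref{veryniceprop}) together with the choice of $\mu$ and $\nu$, to arrange $\{g=0\}$ to be a multicurve $\Gamma_0$ avoiding the singularities, transverse to $\mathscr F(F)$, with the positive singularities in $\{g>0\}$ and the negative ones in $\{g<0\}$ (near the Legendrian boundary one proceeds as in \cite{honda}). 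The only possible defect is that $X$ points into $\{g>0\}$ somewhere along $\Gamma_0$; tracing such a leaf forward and backward along $X$, and using that a positive node is a source and a negative node a sink, one sees that the foliation then has to contain a saddle connection that is a $1$-arc, against the hypothesis. Hence $\Gamma_0$ divides $\mathscr F(F)$, and $F$ is convex. Allowing multiple saddles changes nothing essential: the divergence computation and the analysis of leaves are insensitive to the multiplicity --- a saddle of multiplicity $k$ merely has $2(k+1)$ separatrices --- and alternatively one may split a multiple saddle into simple ones without creating or destroying any $1$-arc.

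The step I expect to be the real work is the local comparison of $T_pF$ with $\xi_+(p)$ along a leaf: making precise, through explicit normal forms near nodes, near simple and multiple saddles, and near the dividing curve, the dictionary relating the rotation number in $\{-1,0,1\}$ to the position of the leaf relative to $\Gamma$ and to the signs of its endpoints --- together with the companion statement in the sufficiency argument that a failure of the crossing condition along $\Gamma_0$ is always witnessed by a $1$-arc. Everything else is either quoted from \cite{Gi,honda} or a routine, if fiddly, surgery-and-transversality argument for singular foliations on surfaces.
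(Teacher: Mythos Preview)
Your route---reduce to Giroux's dividing-set criterion and then argue ``divided $\Leftrightarrow$ no $1$-arc''---is legitimate, but it is not what the paper does, and your sufficiency step has a real gap rather than a deferred detail.

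The paper bypasses the dividing-set criterion entirely. For necessity it argues directly with the contact field: if $\gamma$ is a $1$-arc and $X$ is contact and transverse to $F$, put $f=\alpha(X)$; the $+\pi$ rotation forces $f$ to change sign on $\gamma$, and at a zero of $f$ the identity $d\alpha(X,\dot\gamma)+df(\dot\gamma)=0$ (from the system determining $X$ from $f$) shows the contact plane is rotating \emph{negatively} relative to the plane $\langle\dot\gamma,X\rangle$, which stays transverse to $T_pF$---contradiction. For sufficiency the paper \emph{constructs} the transverse contact line element field. It introduces the Giroux graph $\mathscr G(F)$ (singularities together with all $0$-arcs), orients the $0$-arcs, and uses the very-niceness hypothesis to find a smooth $g$ strictly increasing along each; replacing $\alpha$ by $e^{\mu g}\alpha$ for large $\mu$ makes $d\alpha|_F$ nonvanishing on a neighborhood $F'$ of $\mathscr G(F)$ with $\partial F'$ transverse to the foliation. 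The no-$1$-arc hypothesis is used exactly once: it guarantees that $F\setminus\mathscr G(F)$ is foliated by $-1$-arcs, so one can choose $f\equiv1$ on $F'$ with a single nondegenerate zero-minimum on each $-1$-arc outside $\overline{F'}$, and then $X_{\pm\sinh(\nu\sqrt f)}$ is transverse to $F$ for large $\nu$. No elimination lemma, no $\{g=0\}$ curve.

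The problematic step in your sketch is the sufficiency direction. Invoking the elimination lemma there is misplaced: that lemma alters $F$, but you must certify the given $F$. What you actually have is the freedom to rescale $\mu$ (equivalently the area form), and that alone does not make $\{g=0\}$ a multicurve transverse to $\mathscr F(F)$ without already using the structure that the no-$1$-arc hypothesis provides. More seriously, the tracing argument---``if $X$ points into $F_+$ somewhere on $\Gamma_0$, follow the leaf and find a $1$-arc''---is not a proof as stated. Before $\Gamma_0$ is known to divide, a leaf can meet $\Gamma_0$ several times with crossings of both signs and terminate at nodes on either side; you have not shown why a wrong crossing forces a saddle--saddle leaf, nor why that leaf carries rotation $+\pi$ rather than $-\pi$. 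The honest way to close this is precisely the paper's construction: build the expanding neighborhood of $\mathscr G(F)$ first, and observe that its complement is foliated by $-1$-arcs. Your necessity argument, by contrast, is fine and unwinds to the same sign computation the paper does with $f=\alpha(X)$.
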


\begin{proof}
The extension of Giroux's proof to surfaces with corners (and not necessarily oriented)
is quite straightforward, so we give again only a sketch.

First, we show that the presence of a $1$-arc among the leaves of $\mathscr F(F)$
means that the surface $F$ is not convex. Let $\gamma$
be such an arc endowed with a regular parametrization. Assume that there exists a contact vector
field $X$ transverse to $F$ in a small neighborhood of $\gamma$.
Denote by $\alpha$ a standard contact form.

After a small perturbation if necessary we may ensure that $0$
is not a critical value of the function $f=\alpha(X)$ restricted to $\gamma$.
The rotation of the contact plane by $\pi$ relative to the tangent plane
means that at some point of $\gamma$ the function $f$ vanishes,
since it is positive at one end of $\gamma$ and negative at the other.
This occurs if and only if the plane $\langle\dot\gamma,X\rangle$ spanned
by $\dot\gamma$ and $X$
becomes contact.

Since $\gamma$ is a Legendrian arc, we get from~\eqref{linear}:
$d\alpha(X,\dot\gamma)+df(\dot\gamma)=0$.
Opposite signs of the summands in the left hand side
at the moment when $f=0$
correspond to rotation of the contact plane relative to the plane
$\langle\dot\gamma,X\rangle$ in the negative direction.
The total rotation with respect to the plane $\langle\dot\gamma,X\rangle$, therefore, must be smaller than $\pi$,
which is incompatible with rotation by $\pi$ relative to the tangent
plane, which remains transverse to $\langle\dot\gamma,X\rangle$.
A~contradiction.

Now we proceed with the proof that the absence of $1$-arcs is sufficient
for the surface $F$ to be convex if the foliation $\mathscr F(F)$ is very nice.

Denote by $\mathscr G=\mathscr G(F)$ the union of all the singularities
and $0$-arcs of $\mathscr F(F)$. Let $\alpha$ be a standard contact form.

Denote by $F'$ a small tubular neighborhood of $\mathscr G(F)$ in $F$.
The coorientations induced by $\xi_+$ at singularities of $\mathscr F$ lying in
the same connected component of $F'$ agree along any path
connecting them, which is by construction,
hence $F'$ is orientable even if $F$ is not.
We choose the orientation of $F'$ so that the form $d\alpha$ is positive
at the singularities of $\mathscr F$. (This is possible since the form $\alpha\wedge
d\alpha$ never vanishes, so~$d\alpha$ defines an orientation
of contact planes that agrees with their coorientation.) Let $\omega$
be an orientation form on $F'$.

We orient each $0$-arc so that if $u$ is a positive tangent vector
to the arc then the coorientation of the arc defined by $\omega(u,\cdot)$
coincides with the coorientation of this arc as a leaf of $\mathscr F$.

Now we choose a smooth function $g:\mathbb S^3\rightarrow[1,\infty)$ whose
restriction to each $0$-arc is a regular monotonic function increasing
in the positive direction. This is possible because
the oriented $0$-arcs form no oriented cycle. Indeed,
an oriented saddle connection cycle cannot occur by the definition
of a very nice foliation. If a cycle contains
at least one node, then the two arcs in it emanating from the
node have opposite orientations. This is because
a $0$-arc connecting an elliptic singularity to a saddle is always
oriented toward the saddle.

One can now show that the restriction
of the $2$-form $d\bigl(\exp(\mu g)\alpha\bigr)$ does not vanish at $\mathscr G$
if $\mu>0$ is a large enough constant.
We replace $\alpha$ by $\exp(\mu g)\alpha$ and $F'$ by a possibly smaller
neighborhood of $\mathscr G$ in $F$ such that $d\alpha$ (for the new $\alpha$) does not vanish in $\overline{F'}$.

Since there are no $1$-arcs, the surface $F\setminus\mathscr G(F)$ is foliated
by $-1$-arcs, and the closure of each has both endpoints in $\mathscr G(F)$.
This implies that the surface $F'$ can be chosen so as to satisfy an additional
restriction that $\partial F'$ is transverse to $\mathscr F(F)$.

Recall that for any smooth function $f:\mathbb S^3\rightarrow\mathbb R$
there exists a unique contact vector field $X$ such that $\alpha(X)=f$,
see~\eqref{linear}. We denote this vector field by $X_f$.

It is possible to find a smooth function $\mathbb S^3\rightarrow\mathbb R$ such that:
\begin{enumerate}
\item
$f(p)=1$ if $p\in F'$;
\item
for any $-1$-arc $\gamma$ the restriction of $f$ to $\gamma\setminus\overline{F'}$
has a single critical point, which is a non-degenerate local minimum, and the critical value is $0$.
\end{enumerate}

The contact line element field $X_{\pm\sinh(\nu\sqrt{f})}$ for large enough $\nu$ is then transverse to $F$.
\end{proof}

Now we have a look at the characteristic foliation of the surface $\widehat\Pi$, where $\Pi$ is a rectangular
diagram of a surface. This foliation is never nice as $\widehat\Pi$
is tangent to $\xi_+$ along a $1$-dimensional subcomplex. We can fix this by a small
perturbation of the surface that can be specified explicitly.

Define $\widehat r^\kappa$ and $\widehat\Pi^\kappa$ in the same way as $\widehat r$ and $\widehat\Pi$ in Definition~\ref{tile-def}
with the function $\widetilde h_r$ replaced by
$$\widetilde h_r(v)=(2/\pi)\arctan\bigl(\tan(\pi h_r(v)/2)\bigr)^{1/(2+\kappa)}.$$

\begin{rema}
The surface $\widehat\Pi=\widehat\Pi^0$ is typically of class $C^1$ only,
whereas $\widehat\Pi^\kappa$ is $C^2$ for $\kappa>0$.
\end{rema}

\begin{prop}
For any rectangular diagram of a surface $\Pi$ and any $\kappa>0$ the characteristic foliation
of $\widehat\Pi^\kappa$ is very nice and has no $1$-arcs.
If $\kappa=0$ it has no $1$-arcs either.
\end{prop}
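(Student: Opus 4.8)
The plan is to analyze $\mathscr{F}(\widehat\Pi^\kappa)$ tile by tile, after recording three local facts. \emph{(a)} On the interior of a tile, parametrized by $(\theta,\varphi)\mapsto(\theta,\varphi,\widetilde h_r(\theta,\varphi))$, the pullback of $\alpha_+$ is $\cos^2(\pi\widetilde h_r/2)\,d\varphi+\sin^2(\pi\widetilde h_r/2)\,d\theta$, which never vanishes; so there are no singularities in tile interiors and there the foliation is regular with $\dot\theta=\cos^2(\pi\widetilde h_r/2)>0$ and $\dot\varphi=-\sin^2(\pi\widetilde h_r/2)<0$ along leaves. \emph{(b)} Every side $\widehat v$ of a tile is a Legendrian arc whose tangent direction $\partial_\tau$ lies in $T_pF\cap\xi_+(p)$, so $\widehat v$ minus its singular points is a union of leaves; hence a leaf lying in a tile interior cannot cross a side and must terminate, at both ends, at singularities on the sides. \emph{(c)} The singular set of $\mathscr{F}(\widehat\Pi^\kappa)$ is the set of points of $\bigcup_v\widehat v$ where $\widehat\Pi^\kappa$ is tangent to $\xi_+$, and a local-model computation classifies them: every point of $\widehat v\cap(\mathbb{S}^1_{\tau=0}\cup\mathbb{S}^1_{\tau=1})$ is a node at which the foliation is radial (near $\mathbb{S}^1_{\tau=0}$ the form $\alpha_+$ is, up to scaling, the standard $d\varphi+x\,dy-y\,dx$, and the tile is tangent to $\ker d\varphi$ to order exceeding $1$); along a side $\widehat v$ on which the incident tile is tangent to $\xi_+$ — the sides $\widehat{(\theta_1,\varphi_2)}$, $\widehat{(\theta_2,\varphi_1)}$ of Lemma~\ref{tileboundary} — the tangent plane at parameter $\tau$ is $\ker\bigl(\sin^{2+\kappa}(\pi\tau/2)\,d\theta+\cos^{2+\kappa}(\pi\tau/2)\,d\varphi\bigr)$, which coincides with $\xi_+$ only at $\tau\in\{0,1/2,1\}$ when $\kappa>0$ and at every $\tau$ when $\kappa=0$ — this is exactly where the exponent $2+\kappa>2$ is used — and for $\kappa>0$ the extra singularity at $\tau=1/2$ linearizes to a simple hyperbolic saddle; the $\xi_-$-sides $\widehat{(\theta_1,\varphi_1)}$, $\widehat{(\theta_2,\varphi_2)}$ meet $\xi_+$ only at $\tau\in\{0,1\}$ and contribute nothing new.

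Given (a)--(c), for $\kappa>0$ the foliation is \emph{nice}: finitely many singularities, each a node or a simple saddle, and every corner of $\partial\widehat\Pi^\kappa$, lying on $\mathbb{S}^1_{\tau=0}\cup\mathbb{S}^1_{\tau=1}$, is a node. To get \emph{very nice} I would rule out closed leaves and cooriented saddle connection cycles using (a) and (b): a closed leaf would lie in a single tile interior, contradicting the strict monotonicity of $\theta$; and since on a side each saddle is joined only to nodes, a chain of saddle connections must consist entirely of tile-interior leaves, and each link of such a chain runs from the saddle on one $\xi_+$-side of a tile to the saddle on the other, strictly increasing the $\theta$-coordinate of the saddle (and decreasing $\varphi$), so the chain cannot close.

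For the absence of $1$-arcs when $\kappa>0$, I would note that the transverse contact line element field $l_r$ built in the proof of Theorem~\ref{main2}(i) is transverse to $\widehat r^\kappa$ as well — its transversality used only that $\widetilde h_r$ is a monotone increasing, center-symmetric function of the harmonic function $h_r$, which the exponent $1/(2+\kappa)$ preserves — so $\widehat\Pi^\kappa$ is convex, and Proposition~\ref{no1-arcs} then gives that a very nice foliation on a convex surface has no $1$-arc. When $\kappa=0$ the foliation is only singular along the $\xi_+$-sides, hence not very nice; but its regular leaves lie in tile interiors and are $C^1$-limits, as $\kappa\to0^+$, of leaves of $\mathscr{F}(\widehat\Pi^\kappa)$ along which the rotation of $\xi_+$ relative to the tangent plane is $\leqslant 0$ by the $\kappa>0$ case, and since that bound is closed it persists, so $\mathscr{F}(\widehat\Pi^0)$ has no $1$-arc either.

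I expect the main obstacle to be passing from the clean local picture (a)--(c) to the global statements — verifying that leaves really traverse the tile complex as claimed and, ultimately, that the accumulated rotation of $\xi_+$ relative to the tangent plane never reaches $+\pi$. The local computations are routine; the separatrix bookkeeping and the uniform (in $\kappa\geqslant 0$) sign of the accumulated rotation are where the real work lies. An alternative to invoking Proposition~\ref{no1-arcs} would be to compute this rotation directly on each tile-interior leaf and check it stays in $\{-\pi,0\}$, which would make the $\kappa=0$ case literal rather than a limiting argument, at the cost of a somewhat delicate moving-frame calculation near the degenerating $\tau=1/2$ singularities.
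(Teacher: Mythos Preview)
Your local analysis (a)--(c) matches the paper's: no singularities in tile interiors, nodes at the tile vertices, and for $\kappa>0$ simple saddles at the midpoints of the $\xi_+$-sides $\widehat b,\widehat d$, with the $\xi_-$-sides $\widehat a,\widehat c$ each a single $-1$-arc.

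The substantive difference is how you reach ``no $1$-arcs.'' You go indirectly: first establish convexity of $\widehat\Pi^\kappa$ by checking that the explicit line element field from the proof of Theorem~\ref{main2}(i) stays transverse when the exponent $1/2$ is replaced by $1/(2+\kappa)$, then invoke Proposition~\ref{no1-arcs}. The paper instead argues directly on each tile: every interior leaf is isotopic, through Legendrian arcs in $\widehat r^\kappa$ with endpoints on singularities or $0$-arcs, to the $\xi_-$-side $\widehat a$; since $\widehat a$ is a $-1$-arc and the arc type is invariant under such isotopy, every interior leaf is a $-1$-arc. This is shorter, avoids re-verifying transversality of the line element field for the deformed surfaces, and treats $\kappa=0$ on exactly the same footing as $\kappa>0$---no limiting argument is needed, because the isotopy to $\widehat a$ is insensitive to $\kappa$. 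Your closing remark that one could ``compute this rotation directly on each tile-interior leaf'' is essentially what the paper does, packaged as the isotopy to $\widehat a$.

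One caution on your route: your exclusion of saddle-connection cycles rests on ``strictly increasing the $\theta$-coordinate,'' but $\theta\in\mathbb S^1$, so monotonicity alone does not preclude a cycle that winds around the torus through a chain of tiles whose $\xi_+$-vertices link up. You would need to bring in the coorientation, or combine the simultaneous monotonicity in $\theta$ and $\varphi$ more carefully, to close this.
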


\begin{proof}
Let $r=[\theta_1,\theta_2]\times[\varphi_1,\varphi_2]$ be a rectangle from $\Pi$.
Denote $a=(\theta_1,\varphi_1)$, $b=(\theta_2,\varphi_1)$, $c=(\theta_2,\varphi_2)$,
$d=(\theta_1,\varphi_2)$. For any $\kappa\geqslant0$ the disc $\widehat r^\kappa$
is transverse to all arcs $\widehat v$ with $v\in\interior(r)$. So, there are no singularities
of $\mathscr F(\widehat r^\kappa)$ in the interior of $\widehat r^\kappa$.

Along the arcs $\widehat a,\widehat c\subset\partial\widehat r^\kappa$ the disc
$\widehat r^\kappa$ is tangent to the plane field
\begin{equation}\label{xi-kappa}
\xi_-^\kappa=\ker\bigl(\cos^{2+\kappa}(\pi\tau/2)\,d\varphi-\sin^{2+\kappa}(\pi\tau/2)\,d\theta\bigr),
\end{equation}
which is transverse to $\xi_+$ outside of $\mathbb S^1_{\tau=0}\cup\mathbb S^1_{\tau=1}$
and make a $\pi$-twist relative to $\xi_+$ along arcs of the form~$\widehat v$, $v\in\mathbb T^2$.
Thus, $\widehat a$ and $\widehat c$ are $-1$-arcs.

Along the arcs $\widehat b$ and $\widehat d$ the disc $\widehat r^\kappa$ is tangent to the plane field
\begin{equation}\label{xi+kappa}
\xi_+^\kappa=\ker\bigl(\cos^{2+\kappa}(\pi\tau/2)\,d\varphi+\sin^{2+\kappa}(\pi\tau/2)\,d\theta\bigr).
\end{equation}
If $\kappa>0$, then this field coincides with with $\xi_+$ only at the endpoints and in the middle of
any arc of the form $\widehat v$. The rotation of the field relative to $\xi_+$ along each
half of $\widehat v$ is zero,
so, each of $\widehat b$ and $\widehat d$ consists of the closures of two $0$-arcs.

To see the structure of $\mathscr F(\widehat r^\kappa)$ in the interior of
$\widehat r^\kappa$ it is useful to apply the torus projection, see Fig.~\ref{foliatedtile}.
The obtained foliation in $\interior(r)$ is tangent to the line
field in $r\subset\mathbb T^2$:
$$\ker\Bigl(\cos^{\frac2{2+\kappa}}\bigl((\pi/2)h_r(\theta,\varphi)\bigr)\,d\varphi+
\sin^{\frac2{2+\kappa}}\bigl((\pi/2)h_r(\theta,\varphi)\bigr)\,d\theta\Bigr).$$
All the leaves have negative slope that tends to $0$ when
the leaf approaches a horizontal side of $r$ (away of the endpoints
of the side) and to $\infty$
when the leaf approaches a vertical side.
\begin{figure}[ht]
$$\begin{array}{ccc}
\includegraphics[scale=0.7]{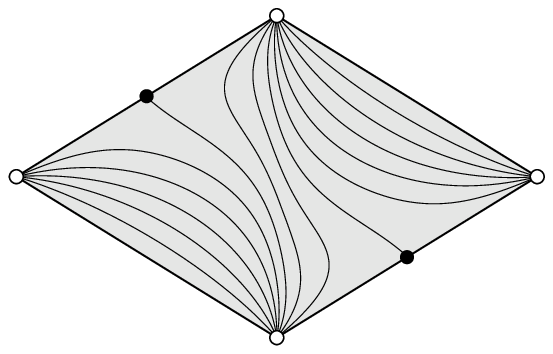}\put(-90,10){$\widehat a$}\put(-90,53){$\widehat d$}\put(-26,10){$\widehat b$}\put(-26,53){$\widehat c$}\hskip0.2cm
\raisebox{10pt}{\includegraphics[scale=0.5]{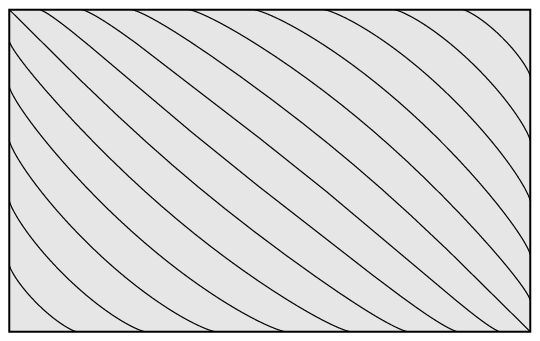}\put(-85,-2){$a$}\put(-85,50){$d$}\put(0,-2){$b$}\put(0,50){$c$}}&
\hskip.5cm&
\includegraphics[scale=0.7]{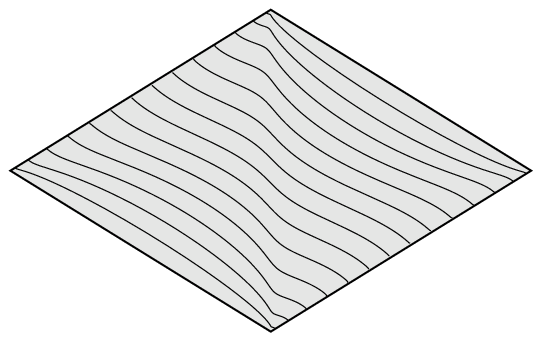}\put(-90,10){$\widehat a$}\put(-90,53){$\widehat d$}\put(-26,10){$\widehat b$}\put(-26,53){$\widehat c$}\hskip0.2cm
\raisebox{10pt}{\includegraphics[scale=0.5]{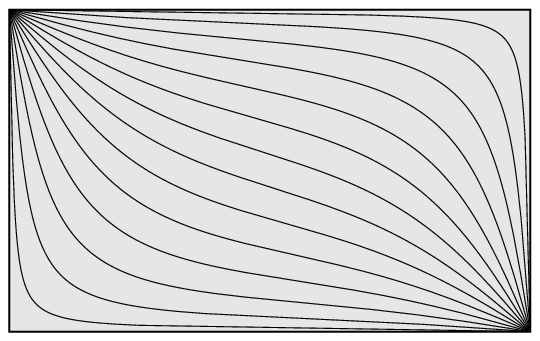}\put(-85,-2){$a$}\put(-85,50){$d$}\put(0,-2){$b$}\put(0,50){$c$}}\\
\kappa>0&&\kappa=0
\end{array}$$
\caption{Foliation $\mathscr F(\widehat r^\kappa)$ and its torus projection}\label{foliatedtile}
\end{figure}
If $\kappa>0$ all leaves, except two or one, end up on internal points
of the sides of $r$, and the exceptional leaves approach
the corners~$b$ or~$d$ and have the slope $-1$ in the limit. If $\kappa=0$ all the leaves
approach corners $b$ and $d$ at different angles.

For the foliation $\mathscr F(\widehat r^\kappa)$, in the case $\kappa>0$, this means that
it has nodes at the corners
of $\widehat r^\kappa$ and saddles in the middle of the arcs $\widehat b$ and $\widehat d$.
All the arcs in the interior of $\widehat r^\kappa$, except two or one separatrices
connect an endpoint of $\widehat b$ with an endpoint of $\widehat d$.

If $\kappa=0$ the sides $\widehat b$ and $\widehat d$ consist of singularities
by whole, and the rest of the tile $\widehat r$ is foliated by
arcs with one endpoint at $\widehat b$ and the other at $\widehat d$.

In both cases all the leaves in the interior of $\widehat r^\kappa$
are $-1$ arcs. This is because the closure of such a leaf
is isotopic to $\widehat a$ in the class of Legendrian curves in $\widehat r^\kappa$
with endpoints at singularities or at $0$-arcs.
\end{proof}

\subsection{The Giroux graph and dividing curves}\label{gg-sec}
Let $F$ be a convex surface with corners
such that the characteristic foliation $\mathscr F(F)$ is very nice.

\begin{defi}
The union $\mathscr G(F)$ of singularities and $0$-arcs of $\mathscr F(F)$
introduced in the proof of Proposition~\ref{no1-arcs}
is called \emph{the Giroux graph of $F$}.

A union $\widetilde{\mathscr G}$ of the Giroux graph with
a finite collection of $-1$-arcs such that $F\setminus\widetilde{\mathscr G}$
is a union of pairwise disjoint open discs
will be called \emph{an extended Giroux graph of $F$}.
\end{defi}

Clearly, both the Giroux graph and any extended Giroux graph are
Legendrian graphs.
The fact that the complement to the Giroux graph is foliated by $-1$-arcs implies
that $F\setminus\mathscr G(F)$ is a union of pairwise disjoint open annuli,
open M\"obius bands, and strips homeomorphic to $(0,1)\times[0,1]$.
Each of them can be cut into discs, so, we have the following.

\begin{prop}
If the foliation $\mathscr F(F)$ is very nice, then
there exists an extended Giroux graph.
Any extended Giroux graph contains $\partial F$.
\end{prop}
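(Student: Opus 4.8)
The plan is to extract both assertions from the description of $F\setminus\mathscr G(F)$ obtained just above, so that the argument is essentially bookkeeping about which leaves lie where. First I would record the setup: since $F$ is convex and $\mathscr F(F)$ is very nice, Proposition~\ref{no1-arcs} excludes $1$-arcs, so every leaf of $\mathscr F(F)$ not contained in $\mathscr G(F)$ is a $-1$-arc. Hence each connected component $C$ of the open set $F\setminus\mathscr G(F)$ is foliated by $-1$-arcs with no closed leaves and no singular points, and therefore---as already noted---is an open annulus, an open M\"obius band, or a strip homeomorphic to $(0,1)\times[0,1]$, the $-1$-arcs being the fibers of the evident interval-bundle structure. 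The observation I would lean on is that a component $C$ has nonempty manifold boundary precisely when $C$ is a strip, in which case its two boundary leaves lie in $\partial F$; dually, every regular leaf contained in $\partial F$ (necessarily a $-1$-arc) lies in a unique component, which is then a strip having that leaf as one of its two boundary leaves.

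For existence I would cut each component into an open disc: delete one $-1$-arc from each open annulus and from each open M\"obius band, and delete \emph{both} boundary leaves (which lie in $\partial F$) from each strip. Let $\gamma_1,\dots,\gamma_m$ be the finitely many $-1$-arcs removed in total and set $\widetilde{\mathscr G}=\mathscr G(F)\cup\gamma_1\cup\dots\cup\gamma_m$; then $F\setminus\widetilde{\mathscr G}$ is a disjoint union of open discs, so $\widetilde{\mathscr G}$ is an extended Giroux graph. As a byproduct this particular $\widetilde{\mathscr G}$ already contains $\partial F$, since its corners are nodes, hence lie in $\mathscr G(F)$, and all of its $-1$-arcs occur among the $\gamma_i$.

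For the second claim I would show that an \emph{arbitrary} extended Giroux graph $\widetilde{\mathscr G}=\mathscr G(F)\cup\gamma_1\cup\dots\cup\gamma_m$ (the $\gamma_i$ being $-1$-arcs) contains $\partial F$. The corners of $\partial F$ are nodes and so lie in $\mathscr G(F)\subseteq\widetilde{\mathscr G}$. For a regular leaf $\delta\subseteq\partial F$ it suffices to prove $\delta\subseteq\widetilde{\mathscr G}$. Here I would first note that $\widetilde{\mathscr G}$ is closed in $F$---it is a finite union of singular points and of regular leaves whose closures again lie in $\widetilde{\mathscr G}$---so every component $D$ of $F\setminus\widetilde{\mathscr G}$ is open in $F$. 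If $\delta$ met some $D$, then $D$ would contain an $F$-neighborhood of a point of $\partial F$, i.e.\ a half-disc, so $D$ would be a surface with nonempty boundary, contradicting that $D$ is an open disc. Hence $\delta\cap(F\setminus\widetilde{\mathscr G})=\emptyset$; and since an open $-1$-arc is disjoint from all singular points, from all $0$-arcs, and from every $-1$-arc other than itself, $\delta\cap\widetilde{\mathscr G}\ne\emptyset$ forces $\delta=\gamma_i$ for some $i$, whence $\delta\subseteq\widetilde{\mathscr G}$. Doing this for every regular leaf of $\partial F$ gives $\partial F\subseteq\widetilde{\mathscr G}$.

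The only point requiring genuine care is the interplay used in both halves: that the complementary regions of an extended Giroux graph are honest boundaryless open discs and that $\widetilde{\mathscr G}$ is closed in $F$. This is what forces the boundary leaves into the graph when constructing one, and it is also exactly the obstruction preventing a boundary leaf from escaping into a complementary disc, which is the heart of the second assertion.
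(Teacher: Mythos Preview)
Your argument is correct and takes the same route as the paper: the observation immediately preceding the proposition (that $F\setminus\mathscr G(F)$ is a union of open annuli, open M\"obius bands, and strips, each of which can be cut into open discs) is all the paper offers by way of proof, and you have simply spelled out what this entails, supplying in addition a clean argument for the second assertion, which the paper leaves entirely implicit. One harmless slip: the parenthetical ``necessarily a $-1$-arc'' is not quite right, since a regular leaf contained in $\partial F$ may also be a $0$-arc---but such a leaf already lies in $\mathscr G(F)\subseteq\widetilde{\mathscr G}$, so the argument goes through unchanged.
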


Another important thing that can be easily seen is that there is a
collection $\beta_1,\ldots,\beta_k$ of simple closed curves
and proper arcs (meaning: $\beta_i\cap\partial F=\partial\beta$) in $F$
such that:
\begin{enumerate}
\item
for each $i=1,\ldots,k$, the curve $\beta_i$ is transverse to the standard contact
structure;
\item
every $-1$-arc of $\mathscr F(F)$ intersects $\cup_{i=1}^k\beta_i$
exactly once.
\end{enumerate}
Such curves $\beta_i$ are called \emph{the dividing curves of $\mathscr F(F)$}.

\begin{rema}
Dividing curves can be defined for any convex surface, even if the characteristic
foliation is not nice. A full collection of dividing curves can be obtained
as follows. Take a generic contact line element field transverse to the surface,
and the points where the field is tangent to the contact structure will cut
the surface along a full family of dividing curves. See details in~\cite{Gi}.
\end{rema}

While the equivalence class of the Giroux graph viewed as a Legendrian graph
can easily change (which will be explored below) when a surface is isotoped
in the class of convex surfaces, the isotopy class of dividing curves
is an invariant of such an isotopy~\cite{Gi}.

\begin{rema}
Legendrian graphs viewed up to transformations that can
occur to the Giroux graph are precisely the subject
of the paper~\cite{pra}, though the relation to Giroux
graphs is not noticed there. We will address this relation
in a subsequent paper.
\end{rema}

We endow the dividing curves with \emph{a canonical orientation} induced by the coorientation
of the contact structure.

\begin{rema}
Typically, one assumes the surface $F$ to be oriented, so the Giroux
graph falls naturally into two disjoint subgraphs, positive and negative,
depending on wether the orientation of the surface coincides
with the orientation of the contact structure at the singularities
of the foliation. Each dividing curve
separates a positive component of $\mathscr G(F)$
from a negative one and thus is endowed with a \emph{co}orientation.

If we consider non-oriented surfaces a dividing curve may be a core of a
M\"obius band, but the canonic orientation of all dividing curves is always
well defined.
\end{rema}

To see an example we look again at the surface $\widehat\Pi^\kappa$,
where $\Pi$ is a rectangular diagram of a surface and $\kappa>0$. An
extended Giroux graph of $\widehat\Pi^\kappa$ can be obtained
by taking $\widehat G$, where $G$ is the set of all vertices of
rectangles in $\Pi$. If $r=[\theta_1,\theta_2]\times[\varphi_1,\varphi_2]$
is a rectangle from $\Pi$, then $\widehat{(\theta_1,\varphi_2)}$
and $\widehat{(\theta_2,\varphi_1)}$ are two edges of the Giroux graph.
The torus projection of a dividing curve passing through $\widehat r^\kappa$
must go from $(\theta_1,\varphi_1)$ to $(\theta_2,\varphi_2)$,
see Fig.~\ref{dividing}.
\begin{figure}[ht]
\includegraphics{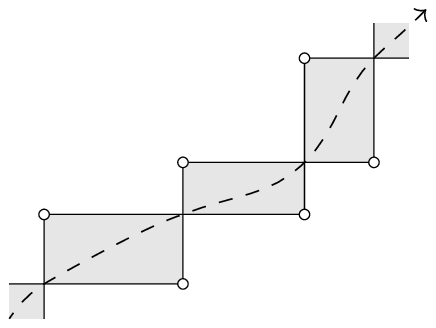}\put(-72,8){$\mathscr G$}\put(-37,28){$\mathscr G$}\put(-17,43){$\mathscr G$}%
\put(-125,33){$\mathscr G$}\put(-85,48){$\mathscr G$}\put(-50,78){$\mathscr G$}\put(0,93){\tiny\parbox{2cm}{dividing\\curve}}
\hskip1cm
\includegraphics{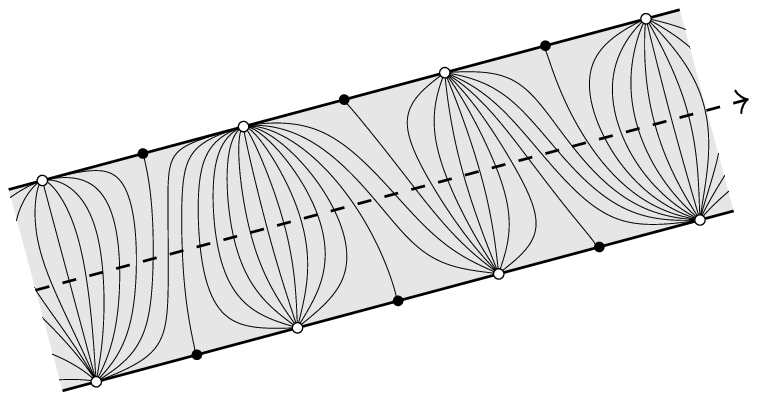}\put(-100,25){$\mathscr G$}\put(-120,95){$\mathscr G$}\put(0,90){\tiny\parbox{2cm}{dividing\\curve}}
\caption{The Giroux graph and a dividing curve on a torus projection (left) and the surface $\widehat\Pi^\kappa$ itself (right)}\label{dividing}
\end{figure}

\subsection{Proof of part~(ii) of Theorem~\ref{main2}}
We prove a more general statement of which Theorem~\ref{main2}
is a partial case. Instead of a link we will consider a general
Legendrian graph to which the surface is attached in a `nice' way,
which we now define.

\begin{defi}\label{properly}
Let $\Gamma$ be a Legendrian graph and $F$ a surface with corners.
We say that \emph{$F$ is properly attached to $\Gamma$} if $F\cap\Gamma$
is a sublink of $\partial F$ and for any $p\in\partial F$
the projection of $F\cup\Gamma$ to the contact plane $\xi_+(p)$
along a vector field transverse to $F$ is locally an injection.
\end{defi}

\begin{defi}
Let $\Gamma_0$, $\Gamma_1$ be Legendrian graphs and $F_0$, $F_1$
convex surfaces with corners that are properly attached to $\Gamma_0$ and $\Gamma_1$,
respectively. We say that the pairs $(F_0,\Gamma_0)$ and $(F_1,\Gamma_1)$
\emph{are equivalent} if there is an isotopy from $F_0\cup\Gamma_0$
to $F_1\cup\Gamma_1$ whose restriction to $\Gamma_0$
satisfies conditions~(a)--(d) from Proposition~\ref{equivalent-equivalence},
and to $F_0$ conditions from Definition~\ref{convex-equiv}.
\end{defi}

Let $F$ be a surface with corners properly attached to a Legendrian graph $\Gamma$.
Let $\gamma\subset\partial F$ be a simple arc such that $\partial\gamma\in\overline{\Gamma\setminus\partial F}$.
Then $F$ is tangent to $\xi_+$ at the endpoints of $\gamma$. So, when $p$ traverses $\gamma$
the angle of rotation of $\xi_+(p)$ relative to $T_pF$ will be a multiple of $\pi$.
We call this angle divided by $2\pi$ \emph{the Thurston--Bennequin number of $\gamma$
relative to $F$ and $\Gamma$} and denote by $\tb_+(\gamma;F,\Gamma)$.

If $\gamma\subset\mathbb S^3$ is an arc or simple closed curve of the form $\widehat X$
with $X$ a finite subset of $\mathbb T^2$, then $|X|$ will be referred to as \emph{the length of $\gamma$}.

\begin{prop}\label{pigamma}
Let $F$ be a convex surface properly attached to a Legendrian graph $\Gamma$.
Let $G$ be a rectangular diagram of a graph such that $\widehat G$ and $\Gamma$
are equivalent. Suppose that, for any arc $\gamma\subset\partial F\cap\Gamma$
such that $\partial\gamma\in\overline{\Gamma\setminus\partial F}$,
the respective arc in $\widehat G$ has length not smaller than
$-2\tb_+(\gamma;F,\Gamma)$. Suppose the same holds for
any connected component $\gamma$ of the link $\Gamma\cap\partial F$
{\rm(}one should use $\tb_+(\gamma;F)$ instead of $\tb_+(\gamma;F,\Gamma)${\rm)}.

Then there exists a rectangular diagram of a surface $\Pi$
such that the pairs $(\widehat\Pi,\widehat G)$ and $(F,\Gamma)$ are equivalent.
\end{prop}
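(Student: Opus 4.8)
The plan is to transfer the reduction of Proposition~\ref{isotopy-to-rectangular} to the convex setting, replacing the auxiliary graph $\widehat{X_1}$ there by an extended Giroux graph of $F$. First I would put $F$ and $\Gamma$ into a normal form. By Proposition~\ref{veryniceprop} I replace $F$ with an equivalent convex surface whose characteristic foliation $\mathscr F(F)$ is very nice; since $F$ is convex, Proposition~\ref{no1-arcs} gives that $\mathscr F(F)$ has no $1$-arc. Any boundary component of $F$ disjoint from $\Gamma$ can be made rectangular with arbitrarily large length by the argument of Lemma~\ref{more-components} (approximate its torus projection, then stabilise); adjoining these components to $\Gamma$ I may assume $\partial F\subset\Gamma$, and, using that $\Gamma$ is equivalent to $\widehat G$ together with \cite[Theorem~2.6.2]{Ge} and Proposition~\ref{adjustment}, an ambient contactomorphism lets me assume $\Gamma=\widehat G$.

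Next I build the combinatorial skeleton. I take the Giroux graph $\mathscr G(F)$ and extend it to $\widetilde{\mathscr G}$ by adjoining finitely many $-1$-arcs, so that $F\setminus\widetilde{\mathscr G}$ is a disjoint union of open discs, each foliated by $-1$-arcs (being a component of $F\setminus\mathscr G(F)$). Using Giroux's elimination lemma in reverse --- creating node--saddle pairs as in the proof of Proposition~\ref{veryniceprop} --- I refine $\widetilde{\mathscr G}$, and $\mathscr F(F)$ near $\partial F$, so that the picture becomes locally that of a rectangular surface: the nodes of $\mathscr F(F)$ on $\partial F$ are exactly the points where $\partial F$ meets $\mathbb S^1_{\tau=0}\cup\mathbb S^1_{\tau=1}$, every Giroux edge carries exactly one interior saddle whose two interior separatrices are $-1$-arcs, and every component of $F\setminus\widetilde{\mathscr G}$ is \emph{tile-shaped}: two opposite sides of the form node--$0$-arc--saddle--$0$-arc--node lying in $\mathscr G(F)$, the other two sides being $-1$-arcs. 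This is where the hypothesis enters. Along an arc $\gamma\subset\partial F\cap\Gamma$ with $\partial\gamma\subset\overline{\Gamma\setminus\partial F}$ (or along a closed component $\gamma$ of $\Gamma\cap\partial F$) the rotation of $\xi_+$ relative to $TF$ forces the number of $-1$-subarcs of $\partial F$ along $\gamma$ to be $-2\tb_+(\gamma;F,\Gamma)$ (respectively $-2\tb_+(\gamma;F)$); the assumed inequality says precisely that the corresponding arc of $\widehat G$ is long enough to carry these $-1$-arcs together with the remaining Giroux-edge sides, and a surplus of length over $-2\tb_+$ is absorbed by inserting trivial tiles (the convex counterpart of a stabilisation, again as in Lemma~\ref{more-components}).

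Now $\widetilde{\mathscr G}\cup\Gamma$ is a Legendrian graph, and by the normalisation the closure of $\widetilde{\mathscr G}\setminus\widehat G$ meets $\widehat G$ only at nodes lying on $\mathbb S^1_{\tau=0}\cup\mathbb S^1_{\tau=1}$, so Proposition~\ref{leg-appr-pcinciple} supplies a rectangular diagram of a graph $G'$ with $\widehat{G'}\supset\widehat G$, with nodes sent onto $\mathbb S^1_{\tau=0}\cup\mathbb S^1_{\tau=1}$ and saddles onto midpoints of arcs of the form $\widehat v$, such that $\widehat{G'}$ is equivalent to $\widetilde{\mathscr G}\cup\Gamma$ relative to $\widehat G$; I carry $F$ along this equivalence (a Legendrian isotopy and a contactomorphism, which preserve convexity and the foliation data). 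Each tile-shaped disc then has its two distinguished sides equal to arcs $\widehat{v_i},\widehat{w_i}$ of $\widehat{G'}$; I let $r_i$ be the unique rectangle with $v_i,w_i$ its antidiagonal vertices and set $\Pi=\{r_1,\dots,r_n\}$. That $\Pi$ is a genuine rectangular diagram of a surface --- pairwise compatibility of the $r_i$, and at most two free vertices on each meridian and longitude --- is checked exactly as in the corresponding part of the proof of Proposition~\ref{isotopy-to-rectangular}, using that no vertex of one $r_i$ lies in the interior of another and that $\partial F=\widehat{\partial\Pi}$ is embedded. Finally, for small $\kappa>0$ the surface $\widehat\Pi^\kappa$ has Giroux graph $\widehat{G'}$ and, on each tile, the same $-1$-arc foliation covering the same $\varphi$-intervals as the corresponding disc of $F$; a leafwise matching carried out disc by disc, as in Step~4 of the proof of Proposition~\ref{isotopy-to-rectangular}, gives an isotopy from $F$ to $\widehat\Pi^\kappa$ through convex surfaces and rel $\widehat{G'}\supset\widehat G$, and letting $\kappa\to0$ replaces $\widehat\Pi^\kappa$ by $\widehat\Pi$. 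Hence $(\widehat\Pi,\widehat G)$ and $(F,\Gamma)$ are equivalent.

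The main obstacle is the middle step: carrying out the reverse elimination so that \emph{every} complementary region of $\widetilde{\mathscr G}$ becomes a single tile with a one-saddle Giroux edge, while simultaneously respecting, along the whole of $\partial F$, the vertex count already fixed by $\widehat G$ --- that is, making the bookkeeping that equates $-2\tb_+(\gamma;F,\Gamma)$ with the length of the corresponding arc of $\widehat G$ precise and mutually compatible over all boundary arcs at once. The subsequent verification that the rectangle collection $\Pi$ obeys the compatibility axioms, and that the final disc-by-disc isotopy never leaves the class of convex surfaces (no $1$-arc is created, since every intermediate leaf stays a $-1$-arc, so Proposition~\ref{no1-arcs} applies throughout), requires care but is routine once the skeleton is in place.
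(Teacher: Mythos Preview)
Your outline follows the paper's strategy closely through the first several moves: making $\mathscr F(F)$ very nice, simplifying the Giroux graph, pushing $\Gamma$ onto $\widehat G$, and then making an extended Giroux graph $\widetilde{\mathscr G}$ rectangular via Proposition~\ref{leg-appr-pcinciple}. Two points deserve comment.

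First, a minor gap. When you write ``an ambient contactomorphism lets me assume $\Gamma=\widehat G$'', you are skipping the fact that equivalence of Legendrian graphs includes angle adjustments, which are \emph{not} realised by contactomorphisms. One must show that the surface $F$ can be carried along an angle adjustment of $\Gamma\cup\mathscr G$ while remaining convex with the same Giroux graph. The paper isolates this as a separate lemma (Lemma~\ref{giroux-adjustment}), proved by a local area-preserving argument in the $xy$-plane of a Darboux chart. Your invocation of Proposition~\ref{adjustment} alone does not cover this, since that proposition moves only the graph, not an attached surface.

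Second, and more seriously, your final step contains the real gap. You assert that the ``leafwise matching carried out disc by disc, as in Step~4 of the proof of Proposition~\ref{isotopy-to-rectangular}, gives an isotopy from $F$ to $\widehat\Pi^\kappa$ through convex surfaces'' and that ``no $1$-arc is created, since every intermediate leaf stays a $-1$-arc''. This is exactly what is \emph{not} automatic. The isotopy produced by Proposition~\ref{isotopy-to-rectangular} is purely topological---it slides points along arcs $\widehat v$---and there is no reason the characteristic foliation of an intermediate surface should even be nice, let alone free of $1$-arcs. The paper handles this with substantial additional work (its Step~6): it first obtains \emph{some} isotopy $F\leadsto\widehat\Pi^\kappa$ rel $\widetilde{\mathscr G}$ from Proposition~\ref{isotopy-to-rectangular}, decomposes it into disc-by-disc pieces, uses a back-and-forth trick to make consecutive discs disjoint, perturbs each intermediate disc to be convex with very nice foliation, eliminates all interior singularities (here Bennequin's theorem is invoked to rule out overtwisted subdiscs), and finally appeals to a separate Lemma~\ref{discs-lem} which shows that two such discs with common boundary, common tangent planes along it, and no interior singularities can be joined through convex discs. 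That lemma in turn requires an argument about bypassing the finitely many moments when a $1$-arc appears. None of this machinery is present in, or obviated by, your sketch; without it the claim that the isotopy stays in the convex class is unsupported.

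Your combinatorial construction of $\Pi$ directly from the tile-shaped discs (taking the antidiagonal vertices $v_i,w_i$) is a pleasant alternative to the paper's route, which instead extracts $\Pi$ as a byproduct of Proposition~\ref{isotopy-to-rectangular}. But the compatibility check you defer to ``exactly as in the corresponding part of the proof of Proposition~\ref{isotopy-to-rectangular}'' does not transfer verbatim: there the argument uses the page-by-page structure of the open book and the ordering of arcs in each $\mathscr P_\theta$, whereas you would need to argue from the embedding of $\widetilde{\mathscr G}$ in $F$ and the disc decomposition. This is plausible but is a different argument.
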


\begin{proof}

We will find an isotopy from the given surface $F$ to a `rectangular' surface.
The isotopy will be defined simultaneously for $\Gamma$ so as to
realize the equivalence between $(F,\Gamma)$ and $(\widehat\Pi,\widehat G)$.

\smallskip\noindent\emph{Step 1.}
Deform $F$ slightly keeping $\Gamma$ and $\partial F$ fixed so that $\mathscr F(F)$ becomes very nice
and $F$ remain convex during the deformation. We can do this by Proposition~\ref{veryniceprop}.

\smallskip\noindent\emph{Step 2.} Eliminate all saddle connections and also $-1$-arcs in $\partial F$
connecting a saddle to a node.

If a saddle connection occurs in the interior of $F$, then it is eliminated by a generic $C^1$-small
perturbation of $F$, so there is no problem to keep $F$ within the class of convex surfaces
during the deformation.

If a saddle connection occurs at $\partial F$, and it is a $0$-arc,
then it can be eliminated by twisting $F$ around this arc. By Proposition~\ref{no1-arcs}, to see that the
surface remains convex during the deformation, which is only $C^0$-small,
it suffices to show that: (i) the foliation remains very nice all the time except
a single moment when a multiple saddle is created; (ii) no $1$-arc is produced.
The former is easy to ensure.
The latter is true if the deformation is supported in a small
enough neighborhood of the $0$-arc because all leaves that come close to the scene approach a
node at the other end, so, they cannot eventually turn into a $1$-arc (which
must connect two saddles) or a closed cycle. The evolution of
the foliation is shown in Fig.~\ref{elimsaddlecon} (a).
\begin{figure}[ht]
\begin{tabular}{cp{2cm}c}
\includegraphics{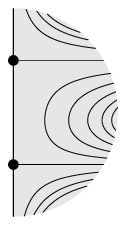}\put(-40,33){$\scriptscriptstyle0$}\hskip0.5cm\raisebox{30pt}{$\longrightarrow$}\hskip0.5cm
\includegraphics{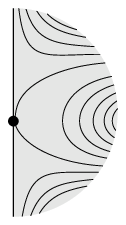}\hskip0.5cm\raisebox{30pt}{$\longrightarrow$}\hskip0.5cm
\includegraphics{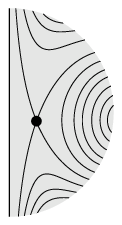}
&\hskip2cm&
\includegraphics{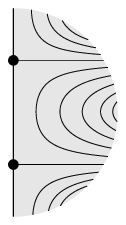}\put(-46,33){$\scriptscriptstyle-1$}\put(-30,33){$\scriptscriptstyle-1$}%
\put(-30,22){$\scriptscriptstyle0$}\put(-30,52){$\scriptscriptstyle0$}
\hskip0.5cm\raisebox{30pt}{$\longrightarrow$}\hskip0.5cm
\includegraphics{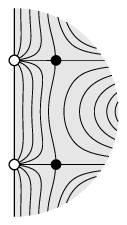}\\
(a)&&(b)
\end{tabular}
\caption{Eliminating saddle connections at $\partial F$}\label{elimsaddlecon}
\end{figure}

Now assume that we have a saddle connection in $\partial F$ which
is a $-1$-arc. Denote it by $\gamma$. Let $s_1$ and~$s_2$ be the saddles connected by $\gamma$.
Leaves of $\mathscr F(F)$ passing by close enough to $\gamma$ must proceed to nodes
at both ends. So, they are also $-1$-arcs. This implies that two separatrices
from $s_1$ and~$s_2$ lying in the interior of $F$ are $0$-arcs. A small twist
of the surface around a portion of these separatrices near $s_1$ and~$s_2$
will shift the saddles off the boundary creating two boundary nodes instead,
see Fig.~\ref{elimsaddlecon}~(b).

$-1$-arcs in $\partial F$ connecting a saddle to a node are eliminated
similarly.

Now the Giroux graph of $\mathscr F(F)$ has a very simple structure. Leaves
passing by a saddle that are not separatrices must be $-1$-arcs. It can be seen from this that
two separatrices emanating from it in opposite directions must be $0$-arcs
and the other two separatrices (or just one if the saddle lies at the boundary)
$-1$-arcs, see Fig.~\ref{saddles2}.
\begin{figure}[ht]
\includegraphics[scale=1.3]{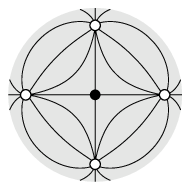}\put(-27,20){\tiny$-1$}\put(-27,55){\tiny$-1$}\put(-62,20){\tiny$-1$}\put(-62,55){\tiny$-1$}%
\put(-52,40){\tiny$-1$}\put(-37,40){\tiny$-1$}\put(-43,45){\tiny$0$}\put(-43,30){\tiny$0$}
\hskip1cm\includegraphics[scale=1.3]{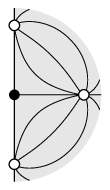}\put(-27,20){\tiny$-1$}\put(-27,55){\tiny$-1$}%
\put(-37,40){\tiny$-1$}\put(-43,45){\tiny$0$}\put(-43,30){\tiny$0$}\\
\caption{Foliation around saddles after simplification}\label{saddles2}
\end{figure}

Thus, the Giroux graph consists of nodes, which are vertices, connected
by smooth arcs each composed of two $0$-arcs and
a saddle in the middle, which are edged, see Fig.~\ref{simplegraph}.
\begin{figure}[ht]
\includegraphics{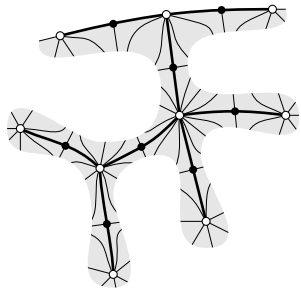}
\caption{Simple Giroux graph}\label{simplegraph}
\end{figure}
We will express this by saying that the Giroux graph is \emph{simple}.
From now on we will
keep the Giroux graph simple except at finitely many moments when an elliptic-saddle
pair is created or eliminated by an application of Giroux's elimination lemma.

\smallskip\noindent\emph{Step 3.}
Now we've got some freedom, which we use to take $\Gamma$ to $\widehat G$.

By varying $F$
in the class of convex surfaces and using Giroux' elimination lemma we can: subdivide edges of the Giroux graph;
subdivide a $-1$-arc
into a smaller $-1$-arc and two $0$-arcs whose closure become a new edge of the Giroux graph;
eliminate two-valent vertices of the Giroux graph;
eliminate one-valent vertices together with the adjacent edges;
move vertices of $\mathscr G$ along the boundary unless
they belong to $\overline{\Gamma\setminus\partial F}\cap\partial F$.
All this can be done without altering $\partial F\cup\Gamma$.
Here is one more thing we can do:

\begin{lemm}\label{giroux-adjustment}
Let $\Gamma'$ and $\mathscr G'$ be Legendrian graphs such that
$\Gamma'\cup\mathscr G'$ is obtained by an angle adjustment
from
$\Gamma\cup\mathscr G$ so that $\Gamma$ is taken to $\Gamma'$
and $\mathscr G$ to $\mathscr G'$.
Then there exists a convex surface with corners~$F'$ such
that the pair $(F,\Gamma)$ is equivalent to $(F',\Gamma')$, and $\mathscr G'$
is the Giroux graph of $F'$.
\end{lemm}

\begin{proof}
The idea of the proof is pretty much the same as that of Proposition~\ref{adjustment}.
Consider again a small cylindrical neighborhood of an elliptic singularity $p$ of $\mathscr F(F)$
with a local coordinate system $x,y,z$ such that $p=(0,0,0)$ and $dz+x\,dy-y\,dx$ is a standard
contact form. The intersection of $F\cup\Gamma$ with a small neighborhood of $p$ can be recovered uniquely from the projection of $\Gamma$
and of the foliation~$\mathscr F(F)$ to the $xy$-plane and the knowledge of
the rest of $F$ and $\Gamma$.

The changes of the $xy$-projection of $\mathscr F(F)$ and $\Gamma$ that can occur as the result
of a small allowed deformation of $F\cup\Gamma$ include those that
can be achieved by applying an area preserving homeomorphism~$\phi$
of the $xy$-plane such that:
\begin{enumerate}
\item
$\phi$ is identity outside of a small neighborhood
of $p$ and $C^0$-close to identity in this neighborhood;
\item
$\phi$ is smooth outside of the origin and has definite
directional derivatives at the origin.
\end{enumerate}
Note that if $p$ lies in the interior of $F$ non-smoothness of $\phi$ does not mean
that the change of $F$ cannot be realized by a smooth isotopy.
We don't need the isotopy to be leafwise, so, each point of
$F$ can simply move along the $z$-axis while angles
between the leaves will be changing.

Such homeomorphisms $\phi$ have enough freedom to realize
an angle adjustment between finitely many leaves of $\mathscr F(F)$
and edges of $\Gamma$ at $p$.
\end{proof}

By a $C^1$-small deformation of $F$ near the points
from $\overline{\Gamma\setminus F}\cap\partial F$
we can make these points nodes of~$\mathscr F(F)$. Then
we use Lemma~\ref{giroux-adjustment} to smoothen the boundary of $F$.
The graphs $\Gamma$ and $\widehat G$ are still equivalent, so,
we can obtain $\widehat G$ from $\Gamma$ by an angle adjustment followed
by a contactomorphisms. Angle adjustment is necessary only at the points of
$\Gamma$ that will be mapped to vertices of $\widehat G$
(lying at $\mathbb S^1_{\tau=0}\cup\mathbb S^1_{\tau=1}$).

Let $q_1,\ldots,q_N\in\Gamma$ be these points.
The bound on the lengths of certain arcs
and simple closed curves in~$\widehat G$
included in the assumption of Proposition~\ref{pigamma} guarantees that there
are enough points $q_i$ on respective arcs and simple
closed curves in~$\Gamma$
to allow us to move the boundary nodes of $\mathscr F(F)$ along $\partial F$
so that any connected component of $\partial F\setminus\{q_1,\ldots,q_N\}$
has a non-empty intersection with at most one $-1$-arc.
We can also make all $q_i$'s nodes either by moving
the existing nodes along $\partial F$ or creating new ones.

Now we use Lemma~\ref{giroux-adjustment} again to do
an angle adjustments in $q_i$'s after which that the graph $\Gamma$
can be taken to $\widehat G$ by a contactomorphism. By Eliashberg's
theorem~\cite{Eli} the group of contactomorphisms of $\mathbb S^3$
is connected.
So, $(F,\Gamma)$ can be deformed in the allowed manner so
that we get $\Gamma=\widehat G$, which is assumed in the sequel.

\smallskip\noindent\emph{Step 4.}
Eliminate unnecessary nodes at $\partial F$.

Each arc of the form $\widehat v$, $v\in\mathbb T^2$,
in $\Gamma$ contains at most one $-1$-arc and
some number of $0$-arcs. All nodes in $\interior(\widehat v)$
should be eliminated, thus making $\interior(\widehat v)$
a single $0$- or $-1$-arc. Note that this does not require
$F$ to be altered near the endpoints of $\widehat v$.

\smallskip\noindent\emph{Step 5.} Choose an extended
Giroux graph $\widetilde{\mathscr G}$ and make it `rectangular'.

We choose $\widetilde{\mathscr G}$ by adding to $\widetilde{\mathscr G}$
some $-1$-arcs connecting nodes. Those nodes may appear
only at $\mathbb S^1_{\tau=0}\cup\mathbb S^1_{\tau=1}$.
Then we find a rectangular diagram of a graph $G_1$ such that
the graph $\overline{\widetilde{\mathscr G}\setminus\Gamma}$ is equivalent
to $\widehat{G_1}$ relative to $\Gamma=\widehat G$.
It exists by Proposition~\ref{leg-appr-pcinciple}.
In the same manner as in Step~3
we can deform $\overline{\widetilde{\mathscr G}\setminus\Gamma}$ to $\widehat{G_1}$
simultaneously with $F$ so that $\Gamma$ is preserved.
To do so we may need to subdivide some $-1$-arcs that
we included in $\widetilde{\mathscr G}$. This will create new $0$-arcs,
which will then be included in $\mathscr G$.

Thus, from now on we assume that $\Gamma=\widehat G$ and
that the extended Giroux graph $\widetilde{\mathscr G}$ has the form $\widehat{G'}$
for some $G'\subset\mathbb T^2$.
We eliminate all unnecessary nodes in $\widetilde{\mathscr G}$,
so that each arc of the form $\widehat v$, $v\in G'$, becomes a $0$-arc or a $-1$-arc.

\smallskip\noindent\emph{Step 6.} Deform $F$ to a surface of the form $\widehat\Pi^\kappa$.

Pick a $\kappa>0$.
We twist $F$ around the boundary so as to make it tangent
to $\xi_+^\kappa$ along $0$-arcs in $\widetilde{\mathscr G}$
and to~$\xi_-^\kappa$ along $-1$-arcs (the plane fields $\xi_\pm^\kappa$
are defined by~\eqref{xi-kappa} and \eqref{xi+kappa}). This can be done
without creating or eliminating any singularity because
the tangent plane to $F$ and the corresponding field $\xi_\pm^\kappa$
already have similar `qualitative behavior' along each edge of $\widetilde{\mathscr G}$.

Now by Proposition~\ref{isotopy-to-rectangular},
in which we can seamlessly replace $\widehat\Pi$ by $\widehat\Pi^\kappa$,
there exists a rectangular diagram of a surface~$\Pi$ and
an isotopy from $F$ to $\widehat\Pi^\kappa$ that
is fixed at $\Gamma\cup\widetilde{\mathscr G}$
and preserves the tangent plane to the surface at any point $p\in\widetilde{\mathscr G}$.

We recall that $F\setminus\widetilde{\mathscr G}$ is a union of open discs,
hence so is $\widehat\Pi^\kappa\setminus\widetilde{\mathscr G}$. Moreover,
both surfaces $F$ and $\widehat\Pi^\kappa$ are convex,
and all the discs in $F\setminus\widetilde{\mathscr G}$ and
$\widehat\Pi^\kappa\setminus\widetilde{\mathscr G}$ are foliated by $-1$-arcs.

Since the tangent planes to $F$ at $\widetilde{\mathscr G}$ are kept fixed
during the isotopy $F\leadsto\widehat\Pi^\kappa$ the latter
can be decomposed into a sequence of isotopies
\begin{equation}\label{Fseq}F=F_0\leadsto F_1\leadsto\ldots\leadsto F_n=\widehat\Pi^\kappa
\end{equation}
such that each of them alters only one of the discs in $F_i\setminus\widetilde{\mathscr G}$.
Let $D\subset F$
be the closure of one of the discs $F\setminus\widetilde{\mathscr G}$,
and $D'$ the respective disc in $\widehat\Pi^\kappa$.
(Strictly speaking, the boundary of $D$ may not be a simple
curve, hence $D$ may not be a disc, but the reasoning below
can be generalized straightforwardly, so we keep assuming it to be a disc.)
So far we have a sequence of isotopies
\begin{equation}\label{Dseq1}
D=D_0\leadsto D_1\leadsto\ldots\leadsto D_m=D'
\end{equation}
each of which realizes a single isotopy in the sequence~\eqref{Fseq}.
Let $D_{i-1}\leadsto D_i$ correspond to $F_{j_i-1}\leadsto F_{j_i}$.

By decomposing further isotopies in~\eqref{Dseq1} to `smaller' ones if necessary we can
assume, for each $i=1,\ldots,m$, the existence of an open ball $B_i$ such that
\begin{enumerate}
\item
$\overline{B_i}\cap(F_{j_i-1}\cup\Gamma)=D_{i-1}$,
$\overline{B_i}\cap(F_{j_i}\cup\Gamma)=D_i$,
$D_{i-1}\cap\partial B_i=
D_i\cap\partial B_i=\partial D$;
\item
the isotopy $D_{i-1}\leadsto D_i$ takes place inside $B_i$ (i.e.\ no point of the interior of the disc
escapes $B_i$).
\end{enumerate}
Having fixed such balls we are going to modify the isotopies so that these properties
still hold. The new isotopies will then compile into an isotopy $F\leadsto\widehat\Pi^\kappa$.
All isotopies of the discs will be assumed to be fixed at $\widetilde{\mathscr G}$
and to keep unchanged the tangent planes to $F$ at $\widetilde{\mathscr G}$.

We can replace each isotopy $D_{i-1}\leadsto D_i$ by composition of two successive isotopies
$D_{i-1}\leadsto D_i'\leadsto D_i$ so that $D_{i-1}\cap D_i'=D_i\cap D_i'=\partial D$.
This is done by `a back and forth trick': the first isotopy pushes the disc toward $\partial B_i$
(either way of the two) and the second moves it back and then to $D_i$.
Thus, without loss of generality we can assume from the beginning that $D_{i-1}\cap D_i=\partial D$.

Now alter each of $D_i$ if necessary so as to make all these discs convex and
the foliation $\mathscr F(D_i)$ very nice.
This can be done without violating the condition $D_{i-1}\cap D_i=\partial D$ and
so that the alteration of $D_i$ takes place in $B_i\cap B_{i+1}$.

By another similar alteration we can eliminate all singularities of $\mathscr F(D_i)$
from $\interior(D_i)$. There is no obstruction to elimination a node-saddle pair
in $\interior(D_i)$ because
the corresponding deformation occurs in a small neighborhood of a $0$-arc
connecting the pair in $\interior(D_i)$. Any node in $\interior(D_i)$
must be connected by a $0$-arc to a saddle in $\interior(D_i)$, so, we can eliminate all nodes
in $\interior(D_i)$. We claim that this will also eliminate all saddles.

Indeed, $\partial D$ consists of four consecutive arcs $\beta_1,\beta_2,\beta_3,\beta_4$,
say, such that $\beta_1,\beta_3\subset\mathscr G(D)$ and $\interior(\beta_2),\interior(\beta_4)$
are $-1$-arcs. Since the tangent planes at $\partial D$ are untouched during the isotopies
the same is true for $D_i$. If a saddle remains in $\interior(D_i)$ after eliminating the vertices
there must be an edge $\gamma$ of $\mathscr G(D_i)$ connecting two
points in $\beta_1\cup\beta_3$. If~$\gamma$ connects a point in $\beta_1$
to a point in $\beta_3$, then it cuts $D_i$ into halves such that the
boundary of each of them contains exactly one $-1$-arc and some number
of $0$-arcs, which is impossible. If $\gamma$ connects two
points in~$\beta_1$ (or in $\beta_3$), then it cuts off an
overtwisted disc (a disc whose boundary has zero relative Thurston--Bennequin number),
which is also a contradiction by a well known result of Bennequin~\cite{ben}.

Thus, $\mathscr F(D_i)$ has no singularities in $\interior(D_i)$.
It also has no closed leaf since such a leaf would also cut off an overtwisted
disc. Application of Lemma~\ref{discs-lem} proven below completes the construction
of an isotopy~$F\leadsto\widehat\Pi^\kappa$ that
realized the equivalence of $(F,\widehat G)$ and $(\widehat\Pi^\kappa,\widehat G)$.

\smallskip\noindent\emph{Step 7.} Now we isotop $\widehat\Pi^\kappa$ to $\widehat\Pi$
simply by moving $\kappa$ to $0$.
\end{proof}

\begin{lemm}\label{discs-lem}
Let $D_0$, $D_1$ be two discs with corners such that
\begin{enumerate}
\item
the discs $D_0$ and $D_1$ have common boundary and the same tangent planes
along it;
\item
the interiors of $D_0$ and $D_1$ are disjoint;
\item
$D_0$ and $D_1$ approach $\partial D_0=\partial D_1$ from the same side;
\item
the characteristic foliations of both are very nice and have no singularities in the interior.
\end{enumerate}
Then there exists an isotopy $\psi:[0,1]\times D_0\rightarrow\mathbb S^3$ such
that $\psi_0=\mathrm{id}|_{D_0}$, $\psi_1(D_0)=D_1$,
and, for any $t\in(0,1)$, the disc $D_t=\psi_t(D_0)$
is convex, and we have $D_t\cap D_0=D_t\cap D_1=
\partial D_t$.
\end{lemm}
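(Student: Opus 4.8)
The plan is to reduce the statement to a ``relative Darboux/isotopy extension'' argument for the region between the two discs. First I would set up the geometry: since $D_0$ and $D_1$ share their boundary and tangent planes along $\partial D_0$, approach it from the same side, and have disjoint interiors, their union $D_0\cup D_1$ bounds a (possibly degenerate) region $R$ in $\mathbb S^3$ that is topologically a ball, pinched along $\partial D_0$. The heart of the matter is that both discs carry a \emph{very nice} characteristic foliation with \emph{no interior singularities}. By the structural analysis already carried out in the proof of Proposition~\ref{pigamma} (Step~6), the boundary $\partial D_i$ splits into four arcs $\beta_1,\beta_2,\beta_3,\beta_4$ with $\beta_1,\beta_3$ lying in the Giroux graph and $\interior(\beta_2),\interior(\beta_4)$ being $-1$-arcs; hence each $D_i$ is foliated entirely by $-1$-arcs running from $\beta_2$ to $\beta_4$ and is in particular convex with no dividing curve in its interior other than a single arc from $\beta_1$ to $\beta_3$. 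So the two discs have \emph{the same} characteristic-foliation data up to the obvious correspondence of leaves.

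The key step is then a normal-form argument. Using the flow of a contact vector field transverse to $D_0$ (which exists because $D_0$ is convex and which I may take, after shrinking a neighborhood, to be transverse to the leaves of $\partial$ of a tubular neighborhood exactly as in the proof of Proposition~\ref{no1-arcs}), I get a product neighborhood $D_0\times(-\delta,\delta)$ of $D_0$ in which the contact structure is an ``invariant'' neighborhood in Giroux's sense, determined by $\mathscr F(D_0)$. The same applies to $D_1$. Since the foliations match and there are no interior singularities, Giroux's uniqueness of invariant neighborhoods (in the $C^1$ / surfaces-with-corners setting used throughout the paper, which requires no orientability) gives a contactomorphism of these neighborhoods carrying $D_0$ to a surface isotopic to $D_1$ rel $\partial$; more directly, I can interpolate: the space of convex discs with the prescribed boundary, prescribed tangent planes along the boundary, and no interior singularities is connected, because any such disc is, in the invariant neighborhood, the graph of a function over $D_0$ with no new tangencies to $\xi_+$, and the set of such graphs is convex (a straight-line homotopy between two such graphs stays transverse to $\xi_+$ away from $\partial D_0$, since transversality is preserved under convex combinations when the foliations agree). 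This produces the isotopy $\psi_t$ with $\psi_0=\mathrm{id}$, $\psi_1(D_0)=D_1$, each $D_t$ convex.

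Finally I would arrange the two extra bookkeeping conditions. To get $D_t\cap D_0=D_t\cap D_1=\partial D_t$ for $t\in(0,1)$, I perform the interpolation inside the thin product region $R$ between $D_0$ and $D_1$ and apply the ``back and forth'' trick already used in Step~6: first push $D_0$ slightly off both $D_0$ and $D_1$ into the interior of a ball containing $R$, then bring it onto $D_1$; at every intermediate time the disc meets $D_0\cup D_1$ only along the common boundary. Fixedness of the tangent planes along $\partial D_0$ throughout is automatic from the construction since everything happens in the interior. \textbf{The main obstacle} I anticipate is the low regularity: $D_0$ and $D_1$ are only $C^1$ surfaces with corners, so I cannot quote the smooth Giroux neighborhood theorem verbatim and must instead argue ``by hand'' that the straight-line homotopy of graphs stays transverse to $\xi_+$ — this is where the hypothesis that \emph{both} foliations are very nice and interior-singularity-free is essential, as it forces the two graphs to have matching zero sets of $d f + x\,dy - y\,dx$ only on $\partial D_0$, so transversality is not lost in between; the convexity of $D_t$ for intermediate $t$ then follows from Proposition~\ref{no1-arcs} once one checks no $1$-arc is created, which holds because every leaf of every $D_t$ limits to a node at both ends, exactly as in the saddle-elimination argument of Step~2.
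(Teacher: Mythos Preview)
Your proposal has a genuine gap in the central step. You claim that in an invariant neighborhood of $D_0$ the disc $D_1$ can be written as a graph $t=f$ over $D_0$, and that the straight-line homotopy $t=sf$ stays free of interior tangencies with $\xi_+$ because ``transversality is preserved under convex combinations when the foliations agree''. Neither assertion is justified. First, there is no reason for $D_1$ to lie inside a thin $\mathbb R$-invariant neighborhood of $D_0$; the two discs share only their boundary and may be far apart in the interior. Second, even if $D_1$ were such a graph, the condition ``no interior singularity'' is \emph{not} convex: in an invariant neighborhood with contact form $\beta+u\,dt$, the graph $t=sf$ has characteristic foliation $\ker(\beta+su\,df)$, and the vanishing locus of $\beta+su\,df$ can easily be nonempty for intermediate $s$ even when it is empty at $s=0$ and $s=1$. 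Consequently your claim that ``every leaf of every $D_t$ limits to a node at both ends'' is unsupported: interior node--saddle pairs can and typically will be born during any isotopy between the discs, and with them the possibility of $1$-arcs.

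The paper's proof takes a different route that confronts this issue directly. One begins with a \emph{generic} transverse isotopy from $D_0$ to $D_1$ inside the region they bound, accepting that interior singularities will appear and disappear. Closed leaves and cooriented saddle-connection cycles are excluded by Bennequin (they would bound overtwisted discs), so by Proposition~\ref{no1-arcs} the only obstruction to convexity of $D_t$ is the appearance of a $1$-arc, which happens at only finitely many times. At each such moment $t_0$ one detours: since $\tb_+(\partial D_0;D_0)=-1$ there is a single dividing curve, hence all interior singularities of $D_{t_0-\delta}$ can be eliminated by Giroux's lemma away from the $1$-arc; once the interior is singularity-free the offending arc disappears, the local transverse push can be performed through convex discs, and then the eliminations are undone to reach $D_{t_0+\delta}$. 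This bypass argument is the missing idea in your proposal.
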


The last condition implies that $\partial D_t$ remains unchanges, and $D_t$
remains `in between' the discs $D_0$ and~$D_1$ and have the same
tangent planes at $\partial D_t$.

\begin{proof}
The proof is based on classical arguments \cite{Eli,gi1,Ge,mas1}
though we could not find exactly this statement (even
for discs with smooth boundary). We will again be sketchy.

First, we pick an isotopy $\psi$ from $D_0$ to $D_1$ such that $D_{t'}\cap D_{t''}=\partial D_0$
for any $t'\ne t''$, where $D_t=\psi_t(D_0)$. In other words, the discs are deformed in the transverse
direction when $t$ changes. By choosing a generic isotopy we can
ensure that all but finitely many discs $D_t$, $t\in(0,1)$ have a nice characteristic foliation,
and the exceptional moments correspond to a birth or death of a node-saddle pair. The
singularity at such a moment moment can be viewed as a saddle of multiplicity~$0$,
so it cannot be a reason for the corresponding disc to be non-convex.

The only possible reasons for $D_t$ not to be convex by Proposition~\ref{no1-arcs} are:
\begin{enumerate}
\item
$D_t$ contains a closed leaf or a cooriented saddle connection cycle, and hence, its
characteristic foliation is not very nice;
\item
$\mathscr F(D_t)$ contains a $1$-arc.
\end{enumerate}
A closed leaf or a cooriented saddle connection cycle cannot appear in a disc because
otherwise it cuts off an overtwisted disc, which is impossible.

So, we are left with a $1$-arc as the only origin of non-convexity. A small transverse
deformation always resolves a $1$-arc, so there can be only finitely
many moments when a $1$-arc occurs. Moreover, to get a $1$-arc
resolution it is enough to make a transverse deformation
in a small neighborhood of any point of the arc.

Let $t=t_0$ be such a moment, and $\gamma$ be a $1$-arc in $D_{t_0}$.
We can assume that it is the only $1$-arc in $D_{t_0}$ as this is achieved by a small
perturbation of the isotopy.
Let $B$ be a ball intersecting $\gamma$ such that $\overline B$
is disjoint from any $0$-arc in $D_{t_0}$ and from $\partial\gamma$.

We can modify our isotopy slightly for the moments $t\in[t_0-2\delta,t_0+2\delta]$,
where $\delta>0$ is small, so that $D_t\setminus B=D_{t_0}\setminus B$
when $t\in[t_0-\delta,t_0+\delta]$ and the deformation of $D_t\cap B$
proceeds in a transverse direction.

Now the point is that we can isotop $D_{t_0-\delta}$ to $D_{t_0+\delta}$
another way through convex discs. To do so, we first eliminate all singularities
in $\interior(D_{t_0-\delta})$ leaving the intersection $D_{t_0-\delta}\cap B$
untouched. This is possible because we need only to modify the disc near $0$-arcs,
which are far from $B$. They are also far from the disc boundary,
so the disc may be kept `in between' the discs $D_0$, $D_1$.

All singularities in the interior of the disc can be eliminated for
the following reason. There are no singularities in $\interior(D_0)$,
which implies that $\mathscr F(D_0)$ has a single dividing curve.
This is equivalent to $\tb_+(\partial D_0;D_0)=-1$, and this
number is preserved during an isotopy. So,
$\mathscr F(D_{t_0-\delta})$ also has a single dividing curve,
hence all singularities in the interior can be eliminated.

When all singularities are eliminated from $\interior(D_{t_0-\delta})$
the $1$-arc is no longer present, so the desired
isotopy inside $B$ can be performed keeping
the disc convex. Then the elimination of singularities is
reversed, and we obtain $D_{t_0+\delta}$.
Doing so for all passages through the $1$-arc obstruction
gives an isotopy from $D_0$ to $D_1$ through convex discs only.
\end{proof}

\subsection{Annuli with Legendrian boundary}\label{annuli}
Convex annuli with Legendrian boundary and
a single dividing curve are building blocks for more general
convex surfaces. Any closed convex surface in $\mathbb S^3$ as well as any
orientable convex surface $F$ with boundary such that $\tb_+(K;F)=0$ for every component
$K$ of $\partial F$ can be composed of such blocks.

Let $A$ be such an annulus. It is bounded by two Legendrian knots that
are obviously isotopic. But are they alway equivalent as Legendrian knots?
If the answer is positive, then a more general statement is true:
if $A$ is an annulus with Legendrian boundary whose components
have zero Thurston--Bennequin numbers relative to $A$,
then they are equivalent as Legendrian knots.

This question was addressed in~\cite{Gos} and the statement
was claimed to be true. However, the argument in~\cite{Gos} is incomplete.
Up to this writing the problem appears to be open.

Theorem~\ref{main2} have the following corollary for convex annuli, which may
be useful to solve the above mentioned problem.

\begin{prop}
Let $A$ be a convex annuli with a single dividing curve. Then it is equivalent
to the convex surface $\widehat\Pi$, where $\Pi=\{r_1,\ldots,r_k=r_0\}$ is a rectangular diagram
such that, for each $i=1,\ldots,k$, the rectangle $r_{i-1}$ shares with $r_i$ a single vertex
that is top right for $r_{i-1}$ and bottom left for $r_i$.
\end{prop}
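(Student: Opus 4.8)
The plan is to apply Theorem~\ref{main2}(ii) to the convex annulus $A$ and then
recognize the shape of the resulting rectangular diagram of a surface. First I
would record the structural facts about $A$ that are relevant. Since $A$ has a
single dividing curve, the relative Thurston--Bennequin number of each boundary
component with respect to $A$ is $0$; this is the standard computation relating
the number of dividing curves that a boundary component meets to
$\tb_+(\,\cdot\,;A)$. After making the characteristic foliation very nice and the
Giroux graph simple (Propositions~\ref{veryniceprop} and~\ref{no1-arcs}, as used
in the proof of Proposition~\ref{pigamma}), the complement of an extended Giroux
graph $\widetilde{\mathscr G}$ in $A$ is a single open disc: an annulus with one
dividing curve, cut along one extra $-1$-arc joining the two boundary circles,
becomes a disc. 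Equivalently, the Giroux graph of $A$ is a cycle of nodes joined
by simple edges (each composed of two $0$-arcs with a saddle in the middle), and
the two boundary circles of $A$ are ``parallel'' to this cycle.

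Next I would feed $A$ into Proposition~\ref{pigamma} (or directly into
Theorem~\ref{main2}(ii)), taking $\Gamma=\varnothing$ so that $A$ has empty
attaching graph and no constraints on boundary lengths beyond those that the
construction itself produces. This yields a rectangular diagram of a surface
$\Pi$ with $\widehat\Pi$ equivalent to $A$ as a convex surface. Now I would
trace through Steps~5 and~6 of the proof of Proposition~\ref{pigamma} in this
special case: the extended Giroux graph $\widetilde{\mathscr G}$ gets straightened
to a graph of the form $\widehat{G'}$ for $G'\subset\mathbb T^2$, and since
$A\setminus\widetilde{\mathscr G}$ is a single disc foliated by $-1$-arcs, the
final surface $\widehat\Pi^\kappa$ — hence $\widehat\Pi$ — has the property that
$\widehat\Pi\setminus\widehat{G'}$ is a single disc as well. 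In terms of tiles:
the combinatorics of $\Pi$ is that of a ``cyclic chain'' of tiles, each glued to
the next along exactly one side $\widehat v$ and otherwise disjoint. Translating
back through the correspondence between sides of $\widehat r$ and vertices of $r$,
two tiles $\widehat{r_{i-1}},\widehat{r_i}$ sharing one side $\widehat v$
correspond to rectangles $r_{i-1},r_i$ sharing exactly the single vertex $v$;
and the fact that along the $0$-arc $\widehat v$ the surface is tangent to
$\xi_+^\kappa$ (as arranged in Step~6) forces $v$ to be the top-right vertex of
$r_{i-1}$ and the bottom-left vertex of $r_i$, by Lemma~\ref{tileboundary} which
identifies the two ``$\xi_+$-type'' sides of a tile as
$\widehat{(\theta_1,\varphi_2)}$ and $\widehat{(\theta_2,\varphi_1)}$. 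Reading
the cyclic order of the chain then gives exactly the diagram
$\Pi=\{r_1,\dots,r_k=r_0\}$ described in the statement.

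The main obstacle I expect is the \emph{bookkeeping of which vertex is shared and
in what role}, i.e.\ making rigorous the last sentence above. One has to verify:
(a) that after Step~5 the extended Giroux graph $\widehat{G'}$ really is a cyclic
chain — a closed path visiting each ``node'' once and each ``edge $\widehat v$''
once — with no extra branchings; this uses that the single-dividing-curve annulus
has Euler characteristic $0$ and that cutting along one $-1$-arc yields a disc,
together with the elimination of unnecessary nodes in Step~4 so that each
$\widehat v$, $v\in G'$, is a single $0$- or $-1$-arc. (b) that the one $-1$-arc
is itself of the form $\widehat v$ for some vertex $v$, which becomes a shared
vertex of two rectangles (not a free vertex) — this has to be arranged when
choosing $\widetilde{\mathscr G}$ in Step~5. (c) the local side-vs-vertex
dictionary at each gluing: the tile $\widehat r$ is tangent to $\xi_+$ (more
precisely $\xi_+^\kappa$) along $\widehat{(\theta_1,\varphi_2)}$ and
$\widehat{(\theta_2,\varphi_1)}$ and to $\xi_-$ along
$\widehat{(\theta_1,\varphi_1)}$ and $\widehat{(\theta_2,\varphi_2)}$
(Lemma~\ref{tileboundary}); since the chain is glued along $0$-arcs, where by
construction of Step~6 the surface is tangent to $\xi_+^\kappa$, each shared side
$\widehat v$ is a ``$\xi_+$-side'' of both adjacent tiles, and a case check on
the two options $(\theta_1,\varphi_2)$ versus $(\theta_2,\varphi_1)$ — combined
with the requirement that the two rectangles meet in a \emph{single} vertex and
are otherwise compatibly disjoint — pins down that $v$ is top-right of one and
bottom-left of the other, consistently around the cycle. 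Everything else is a
direct application of the already-proved Theorem~\ref{main2}(ii), so I would keep
the write-up short, emphasizing (a)--(c) and leaving the routine verifications to
the reader as is done elsewhere in the paper.
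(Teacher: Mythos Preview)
Your approach differs from the paper's, and it has a genuine gap together with a local error that you should be aware of.

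\smallskip
\textbf{The local error.} In your step (c) you assert that the tiles in the cyclic chain are glued along $0$-arcs, i.e.\ along the $\xi_+$-sides $\widehat{(\theta_1,\varphi_2)}$ and $\widehat{(\theta_2,\varphi_1)}$. This is backwards. From Lemma~\ref{tileboundary} and the description at the end of Subsection~\ref{gg-sec}, the sides $\widehat{(\theta_1,\varphi_2)}$ and $\widehat{(\theta_2,\varphi_1)}$ are edges of the Giroux graph, while the dividing curve runs from $(\theta_1,\varphi_1)$ to $(\theta_2,\varphi_2)$ inside each tile. A single closed dividing curve therefore forces the \emph{top-right} vertex $(\theta_2,\varphi_2)$ of $r_{i-1}$ to coincide with the \emph{bottom-left} vertex of $r_i$, and these are $\xi_-$-sides ($-1$-arcs), not $\xi_+$-sides. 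The $\xi_+$-sides are the free vertices forming $\partial\Pi$. So your ``case check'' in (c) cannot conclude what you want; as written it would yield top-left/bottom-right sharing, contradicting the statement.

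\smallskip
\textbf{The real gap.} Even after correcting the above, your plan only establishes that \emph{consecutive} rectangles share a vertex of the right type; it does not show that this is the \emph{only} vertex they share, nor that non-consecutive rectangles are vertex-disjoint. You invoke ``the requirement that the two rectangles meet in a single vertex'' in (c), but that is precisely the conclusion to be proved, not an input. Nothing in the construction of Proposition~\ref{pigamma} or Proposition~\ref{isotopy-to-rectangular} prevents, say, the top-left vertex of some $r_j$ from coinciding with the bottom-right vertex of some $r_l$; compatible rectangles are allowed to share two or four vertices. Your attempt to control this by tracing Steps~5--6 does not work: rectangularizing $\widetilde{\mathscr G}$ subdivides arcs and introduces many new vertices, so the single-disc complement of the \emph{original} extended Giroux graph says nothing about the number of tiles or their incidence pattern in the final $\Pi$.

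\smallskip
\textbf{What the paper does instead.} The paper first applies Theorem~\ref{main2} to get \emph{some} diagram $\Pi$ with $\widehat\Pi$ equivalent to $A$, and reads off from the single dividing curve only that the top-right vertex of $r_{i-1}$ equals the bottom-left of $r_i$ for each $i$. It then explicitly allows for extra shared vertices and removes them by a local ``cut'' move: whenever a point $v$ is simultaneously the top-left vertex of one rectangle and the bottom-right of another, one cuts the diagram at $v$, producing an equivalent convex annulus with the same number of rectangles but longer boundary. That this cut preserves the equivalence class is argued via the characteristic foliation of $\widehat\Pi^\kappa$: it amounts to eliminating an interior node--saddle pair, creating two boundary node--saddle pairs, an angle adjustment, and a contact isotopy. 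Finitely many such cuts yield the desired form. This cut step is the actual content of the proof, and it is missing from your proposal.
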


\begin{proof}
Let $\Pi$ be an arbitrary rectangular diagram of a surface such that
$\widehat\Pi$ is equivalent to $A$. Such a diagram exists by Theorem~\ref{main2}.
The surface $\widehat\Pi$ has a single dividing curve, which means (see the end of Subsection~\ref{gg-sec})
that $\Pi$ has the form $\{r_1,\ldots,r_k=r_0\}$, where the top right vertex of $r_{i-1}$
coincides with the bottom left vertex of $r_i$, $i=1,\ldots,k$. If they share no more
vertices we are done.

If some rectangles in $\Pi$ share two vertices,
we can find an edge $e$ of the diagram $\partial\Pi$ endowed
with the boundary framing induced by $\Pi$ such that
some point $v$ in $e$ is the top left vertex of one rectangle from $\Pi$
and the bottom right for another. Then
we can `cut' the diagram at the `wrong' vertex and get a diagram
with the same number of rectangles but longer boundary, see Fig.~\ref{cut}.
\begin{figure}[ht]
\begin{tabular}{ccccccc}
\hbox to 0pt{\hss\includegraphics{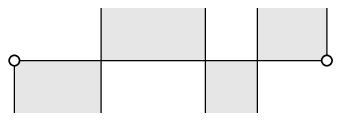}\hss}&&&\hbox to 0pt{\hss\raisebox{18pt}{$\longrightarrow$}\hss}&&&
\hbox to 0pt{\hss\includegraphics{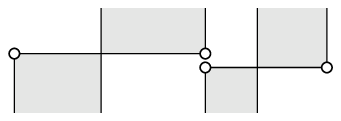}\hss}\\
\includegraphics{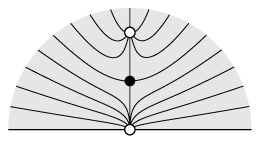}&\raisebox{18pt}{$\rightarrow$}&
\includegraphics{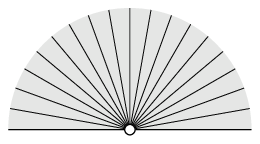}&\raisebox{18pt}{$\rightarrow$}&
\includegraphics{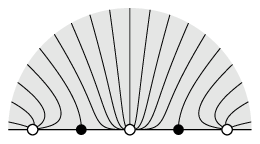}&\raisebox{18pt}{$\rightarrow$}&
\includegraphics{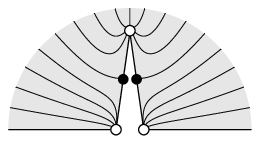}
\end{tabular}
\caption{Simplifying an annulus}\label{cut}
\end{figure}
After finitely
many such cuts no two rectangles will share more than one vertex.

We claim that such cuts produce an equivalent convex surface. To see this
we switch from $\widehat\Pi$ to $\widehat\Pi^\kappa$ with $\kappa>0$.
For $\widehat\Pi^\kappa$ the discussed operation means cutting along
an edge of the Giroux graph such that exactly one endpoint of this edge
lies at the boundary of the annulus. The same result
can be obtained by a continuous deformation consisting of
four stages: elimination of a node-saddle pair in the interior of
the annulus, creation of two new node-saddle pairs at the boundary,
angle adjustment, contact isotopy. This is sketched in Fig.~\ref{cut}.
\end{proof}

\begin{conj}
The two boundary components of the annulus represented
by the rectangular diagram in Fig.~\ref{example} are not
equivalent as Legendrian knots.
\begin{figure}[ht]
\includegraphics[width=250pt]{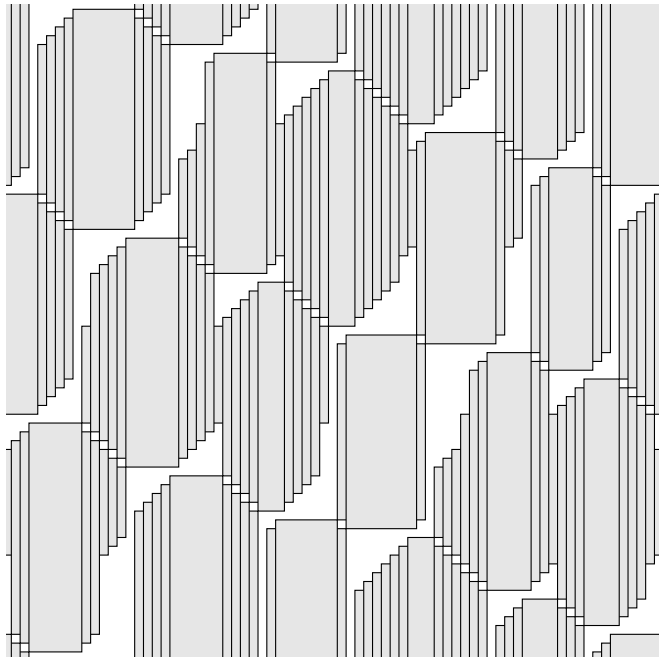}
\caption{A rectangular diagram of an annulus}\label{example}
\end{figure}
\end{conj}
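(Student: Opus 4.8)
The plan is to carry out the distinguishing method advertised in the introduction: exhibit a convex surface with a prescribed dividing-curve configuration inside the complement of one of the two boundary knots and prove \emph{combinatorially} that no such surface exists in the complement of the other. Write $K_1$ and $K_2$ for the two boundary components of the annulus $\widehat\Pi$ of Fig.~\ref{example}, with the Legendrian structures they inherit. The first step is routine: read off from the diagram the classical invariants of $K_1$ and $K_2$ — the Thurston--Bennequin numbers $\tb_\pm$ and the rotation number — and verify that they coincide (they must, since $K_1$ and $K_2$ are smoothly isotopic and cobound an annulus along which both have vanishing relative Thurston--Bennequin number). One should also confirm, as the authors assert, that the Chekanov--Eliashberg DGA, ruling invariants, and grid-diagram invariants extracted from Fig.~\ref{example} do not separate the two knots, so that a genuinely new obstruction is needed.

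The core of the argument is a surface-existence dichotomy. Fix an auxiliary Legendrian link $L$ in the complement of $K_i$ — a longitudinal cable, or the core of the complementary solid torus of a tubular neighbourhood of $K_i$ together with a chosen meridian — and fix a topological type of surface $F$ interpolating between $K_i$ and $L$ (an annulus, or a once-punctured torus) together with a target isotopy class of dividing curves on $F$, i.e.\ a target slope. By part~(ii) of Theorem~\ref{main2}, together with Proposition~\ref{pigamma}, the existence of a convex surface with corners realising this data and satisfying the relative inequalities $\tb_+(K;F)\le 0$, $\tb_-(K;F)\le 0$ is \emph{equivalent} to the existence of a rectangular diagram of a surface $\Pi'$ of the prescribed topological type whose boundary $\partial\Pi'$ is a rectangular diagram of the link $K_i\cup L$ in the correct framed combinatorial class. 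Thus the problem becomes purely combinatorial. For $K_1$ I would exhibit such a $\Pi'$ explicitly, proving that the distinguishing surface lives in the complement of $K_1$; for $K_2$ I would argue that no such $\Pi'$ exists.

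The non-existence step is where the difficulty concentrates. One needs an \emph{a priori} bound on the complexity of a hypothetical $\Pi'$: the number of tiles and the cardinality of $\partial\Pi'$ are constrained by the Euler characteristic of $F$ and by the $\tb$-data through the counts in Proposition~\ref{tbproperties} and inequality~\eqref{lengthbound}, so in principle only finitely many candidates $\Pi'$ need be examined. Producing such a bound is delicate precisely because, as the introduction stresses, finiteness of non-destabilizable rectangular diagrams is open in general; here the bound must be extracted from a Bennequin-type inequality for the convex surface $F$ rather than from any general finiteness result. Once the search space is finite, a (possibly computer-assisted) enumeration together with a combinatorial obstruction — an incompatibility between the cyclic order of free vertices forced by the diagram of $K_2$ and the checkerboard arrangement of rectangles around the critical meridians, in the spirit of the compatibility analysis in the proof of Proposition~\ref{isotopy-to-rectangular} — should rule out every candidate. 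With the dichotomy established, the complements $\mathbb S^3\setminus K_1$ and $\mathbb S^3\setminus K_2$ admit no contactomorphism fixing the boundary, so $K_1$ and $K_2$ are not Legendrian equivalent. I expect the choice of the right auxiliary link, surface type, and target slope — one that genuinely survives in one complement while being combinatorially excluded in the other — to be as much of an obstacle as the finiteness bound, and to require extensive experimentation with the diagram of Fig.~\ref{example}.
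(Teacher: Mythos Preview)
The statement you are attempting to prove is labeled a \emph{Conjecture} in the paper, and the paper does not prove it. The authors explicitly present the diagram of Fig.~\ref{example} as a \emph{candidate counterexample} to the assertion that the two boundary components of such an annulus are always Legendrian equivalent; they remark that the Pushkar--Chekanov invariant vanishes for both knots, that other algebraic invariants appear intractable, and that the distinguishing method via rectangular diagrams of surfaces ``is out of the scope of the present paper and will be presented in a subsequent paper.'' So there is no proof in the paper for you to be compared against.

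What you have written is not a proof but a research programme, and you are candid about this: the two places where actual work must be done --- the explicit construction of $\Pi'$ for $K_1$, and the finiteness-plus-enumeration argument excluding any $\Pi'$ for $K_2$ --- are left as intentions. Your outline is broadly consonant with the strategy the authors advertise in the introduction, so as a plan it is reasonable. But the genuine obstacles are exactly the ones you flag and do not resolve. In particular, the \emph{a priori} bound on the complexity of a hypothetical $\Pi'$ does not follow from Proposition~\ref{tbproperties} or inequality~\eqref{lengthbound} alone: those control the length of $\partial\Pi'$ in terms of relative Thurston--Bennequin numbers, but they do not bound the number of rectangles in $\Pi'$, nor do they bound the number of rectangular presentations of $K_2\cup L$ one must search over. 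Getting such a bound would require either a finiteness theorem for non-destabilizable diagrams (open, as you note) or a monotonic-simplification result for rectangular diagrams of surfaces analogous to the one for unknot diagrams --- neither of which is available in this paper. Until that gap is closed, the non-existence half of your dichotomy is a hope rather than an argument, and the conjecture remains open.
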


To construct the example we used a generalization of rectangular diagrams of surfaces that corresponds to branched
surfaces (in the sense of~\cite{fo}), which in our particular case have the form of an $I$-bundle over a train track. Such presentation is
more compact and allows us to work with infinite series of surfaces each series presented by a  single `branched' diagram.

Author's experience shows that to find simpler examples for which the equivalence of the two boundary components would not be
obvious is pretty hard. It happens very often that the knots can be transformed into each other by exchange moves.
The example in Fig.~\ref{example} is not such, both components of the boundary~$\partial\Pi$
are rigid in a sense that they don't admit any exchange move. This fact may be used in the future
to prove that they are not equivalent, since establishing that some kind of `rectangular' surfaces
does not exist for the given knot becomes much easier if the diagram is rigid. (For instance,
it requires very little work to show that the knots in our example are hyperbolic.)

The known algebraic
invariants do not seem to help to distinguish our knots as most of them are
just too hard to compute, whereas the Pushkar--Chekanov invariant~\cite{PC} is computable
at some effort but vanishes for both.


\begin{thebibliography}{99}
\bibitem{ben}
D.\,Bennequin. Entrelacements et equations de Pfaff, \emph{Asterisque} {\bf107--108} (1983), 87--161.
\bibitem{cho}
W.\,Chongchitmate, L.\,Ng. An atlas of Legendrian knots, \emph{Exp.\ Math.} {\bf22} (2013), no.~1, 26--37.
\bibitem{col}
V.\,Colin. Chirurgies d'indice un et isotopies de sph\`eres dans les vari\'et\'es de contact tendues, \emph{C. R. Acad. Sci. Paris S\'er. I Math.} {\bf324} (1997), no.~6, 659--663.
\bibitem{cgh}
V.\,Colin, E.\,Giroux, K.\,Honda.
Finitude homotopique et isotopique des structures de contact tendues,
\emph{Publications math\'ematiques de l'IH\'ES} {\bf109} (2009), no.~1, 245--293.
\bibitem{Dyn} I.Dynnikov. Arc-presentations of links: Monotonic
simplification, \emph{Fund.Math.} {\bf 190} (2006), 29--76.
\bibitem{DyPr}
I.\,A.\,Dynnikov, M.\,V.\,Prasolov. Bypasses for rectangular diagrams. A proof of the Jones conjecture and related questions (Russian), \emph{Trudy MMO} {\bf74} (2013), no.~1, 115--173; translation in
\emph{Trans.\ Moscow Math.\ Soc.} {\bf74} (2013), no.~1, 97--144.
\bibitem{Eli} Y.\,Eliashberg, Contact 3-manifolds twenty years since J. Martinet's work, \emph{Annales de l'institut Fourier} (1992), {\bf42}, no.~1--2, 165--192.
\bibitem{EF1} Y.\,Eliashberg, M.\,Fraser. Classification of topologically trivial legendrian knots, \emph{CRM\ Proc.\ Lecture\ Notes}, (1998), {\bf 15}, no. 15, 17--51.
\bibitem{EF2} Y.\,Eliashberg, M.\,Fraser. Topologically trivial Legendrian knots, \emph{J.\ Symplectic\ Geom.}, {\bf7}, no. 2, (2009), 77--127.
\bibitem{etho}
J.\,Etnyre, K.\,Honda. On the nonexistence of tight contact structures, \emph{Ann.\ of Math.} (2) {\bf153} (2001), no.~3, 749--766.
\bibitem{etho05}
J.\,Etnyre, K.\,Honda. Cabling and transverse simplicity, \emph{Ann.\ of Math.} (2) {\bf162} (2005), no.~3, 1305--1333.
\bibitem{EVH-M}
J.\,Etnyre, J.\,Van Horn-Morris. Fibered transverse knots and the Bennequin bound, \emph{IMRN} (2011), 1483--1509.
\bibitem{Gi} E.\,Giroux. Convexit\'e en topologie de contact, \emph{Comment.\ Math.\ Helv.} {\bf66} (1991), 637--677.
\bibitem{fo}
W.\,Floyd and U.\,Oertel.
Incompressible  surfaces  via  branched  surfaces.
\emph{Topology} {\bf23} (1984), 117--125.
\bibitem{Ge}
H.\,Geiges. \emph{An Introduction to Contact Topology}, Cambridge University Press (2008).
\bibitem{gi1}
E.\,Giroux. Structures de contact en dimension trois et bifurcations des feuilletages de surfaces,
\emph{Invent.\ Math.} {\bf141} (2000), no.~3, 615--689.
\bibitem{Gos}
G.\,Gospodinov.
Relative Knot Invariants: Properties and Applications, \emph{preprint} arxiv:0909.4326.
\bibitem{honda}
K.\,Honda. On the classification of tight contact structures I, \emph{Geometry \& Topology} {\bf4} (2000), 309--368.
\bibitem{ho}
K.\,Honda. 3-dimensional methods in contact geometry, in \emph{Different Faces of Geometry}, Edited by Donaldson et al.,
Kluwer Academic/Plenum Publishers, New York, 2004.
\bibitem{IK} T.\,Ito, K.\,Kawamuro. Open book foliation, \emph{Geometry \& Topology} {\bf18} (2014), 1581--1634.
\bibitem{kan}
Y.\,Kanda. The classification of tight contact structures on the 3-torus,
\emph{Comm.\ Anal.\ Geom.} {\bf 5} (1997), no.~3, 413--438.
\bibitem{Lac} M.\,Lackenby. A polynomial upper bound on Reidemeister moves,
\emph{Ann.\ of Math.\ (2)} {\bf182} (2015), no.~2, 491--564.
\bibitem{mos}
C.\,Manolescu, P.\,Ozsv\'ath, S.\,Sarkar. A combinatorial description of knot Floer homology, \emph{Annals of Mathematics} (2) {\bf 169} (2009), 633--660.
\bibitem{mas}
P.\,Massot, Geodesible contact structures on 3-manifolds, \emph{Geometry \& Topology} {\bf12} (2008) 1729--1776.
\bibitem{mas1}
P.\,Massot. Topological methods in 3-dimensional Contact geometry, \emph{preprint}, arXiv:1303.1063.
\bibitem{mat}
S.\,V.\,Matveev. \emph{Algorithmic topology and classification of 3-manifolds}, Springer 2003.
\bibitem{OP} D.\,O'Donnol, E.\,Pavelescu. On Legendrian Graphs, \emph{Algebraic \& Geometric Topology} (2012), {\bf12}, no.~3, 1273--1299.
\bibitem{ngth}
L.\,Ng, D.\,Thurston. Grid diagrams, braids, and contact geometry, \emph{Proceedings of G\"okova Geometry---Topology Conference} (2008), 120--136; G\"okova Geometry--Topology Conference (GGT), G\"okova, 2009.
\bibitem{pra}
M.\,Prasolov. Rectangular Diagrams of Legendrian Graphs, \emph{Journal of Knot Theory and Its Ramifications} {\bf23} (2014), no.~13, 1450074.
\bibitem{PC}
P.\,Pushkar', Yu.,Chekanov. Combinatorics of fronts of Legendrian links and the Arnol'd 4-conjectures (Russian), \emph{Uspekhi Mat.\ Nauk} {\bf60}
(2005), no.~1, 99--154; translation in \emph{Russian Mathematical Surveys} {\bf60} (2005), no.~1, 95--149.
\end{thebibliography}
\end{document}